\DeclareRobustCommand{\rchi}{{\mathpalette\irchi\relax}}
\newcommand{\irchi}[2]{\raisebox{\depth}{$#1\chi$}} 
\newcommand{\NN}{\mathbb{N}}
\newcommand{\RR}{\mathbb{R}}
\newcommand{\ZZ}{\mathbb{Z}}
\newtheorem{theorem}{Theorem}[section]
\newtheorem{proposition}[theorem]{Proposition}
\newtheorem{lemma}[theorem]{Lemma}
\newtheorem{corollary}[theorem]{Corollary}
\newtheorem{remar}[theorem]{Remark}
\newenvironment{remark}{\begin{remar} \rm}{\end{remar}}
\newtheorem{remars}[theorem]{Remarks}
\newtheorem{defin}[theorem]{Definition}
\newenvironment{definition}{\begin{defin} \rm}{\end{defin}}
\newtheorem{defins}[theorem]{Definitions}
\newenvironment{definitions}{\begin{defins} \rm}{\end{defins}}
\newtheorem{examp}[theorem]{Example}
\newenvironment{example}{\begin{examp} \rm}{\end{examp}}
\newtheorem{notat}[theorem]{Notation}
\newenvironment{notation}{\begin{notat} \rm}{\end{notat}}
\newtheorem{notats}[theorem]{Notations}
\newenvironment{notations}{\begin{notats} \rm}{\end{notats}}
\title{Key polynomials, separate and immediate valuations, and simple extensions of valued fields}
\author{G\'erard Leloup}
\address{G\'erard Leloup, 
Laboratoire Manceau de Math\'ematiques, 
LMM EA 3263, 
Le Mans Universit\'e, 
avenue Olivier Messiaen, 
72085 LE MANS CEDEX, 
FRANCE.}
\email{gerard.leloup-At-univ-lemans.fr}
\keywords{simple extension, valuation, key polynomial.}
\subjclass[2010]{12J10, 12J20, 12F99.}
\date{May 16, 2022}
\begin{document}
\begin{abstract} We give a first-order definition of key polynomials, we show the links with previous definitions, 
that it is relevant to study key degrees, and to use a kind of valuations that we call partially multiplicative. 
We also prove or reprove several properties. 
\end{abstract}
\maketitle
\indent \\
\indent We have changed definitions from the first version of this paper, in order to 
make them consistent with those of other papers. Some computational and tedious proofs have been omitted in this 
new version. The reader can find them in versions 1 and 2 of this paper.  \\
\indent Valuations occur in different areas of mathematics, for example in algebraic geometry. 
In particular, some authors studied the extension of a valuation $\nu$, defined on a field $K$, 
to $K[\rchi]$. For this purpose, they construct $K$-module valuations which converges to $\nu$. 
These $K$-module valuations are defined using polynomials which are called key polynomials. 
For his part, the author of the present article has studied the first order properties of 
extensions of valued fields. In this context, vs defecless extensions, immediate extensions and the 
initial segments $\nu(l-K)$ ($l\notin K$) play an important role. Now, there are similarities 
between these two types of studies. 
%The present paper brings a different approach to the study of key polynomials. 
We found in\-te\-res\-ting to revisit the key polynomials and the associated valuations in the light of another background. 
The approach of the present paper relies on euclidean division of polynomials, bases of 
vector spaces or modules, and the notions of vs defectless and immediate extensions (Definitions \ref{def11} and \ref{def14}). 
Indeed, vs defectless (or separate) and immediate extensions play a crucial role in the study of 
the first-order theory of extensions of valued fields (\cite{B1}, \cite{B2}, \cite{D88}, \cite{D91}, 
\cite{L89}, \cite{L03}). \\
\indent Let $(K,\nu)$ be a valued field and $\rchi$ be algebraic or transcendental over $K$. Key polynomials were introduced to 
approximate an extension of $\nu$ to the field $K(\rchi)$ 
by $K$-module valuations. In particular, this study uses $K$-module 
valuations such that there exists a monic polynomial $\Phi$ of degree $d$ such that the family $(\Phi^n)$ ($n\geq 0$) 
is separated over the $K$-module $K_{d-1}[\rchi]$ of polynomials of degree at most $d-1$ (they are also called augmented 
valuations or truncated mappings). Here we denote them by $\nu_{\Phi}$. In the case where $\rchi$ is 
algebraic over $K$, these $K$-module valuations cannot be valuations of fields since they do not 
satisfy the rule $\nu(fg)=\nu(f)+\nu(g)$. However, they can satisfy this rule if $\mbox{deg}(f)+\mbox{deg}(g)<
[K(\rchi)\!:\!K]$. If this holds, then we call them pm valuations. In fact, $\nu_{\Phi}$ is a pm valuation if 
$\Phi$ is what we call here a weak key polynomial (in short w key polynomial). 
These w key polynomials are related to the key polynomials 
defined by MacLane (we show that $\Phi$ being a w key polynomial for $\nu$ is equivalent to being a MacLane 
key polynomial for $\nu_{\Phi}$), but we do not require to define families of w key polynomials by induction. \\
\indent 
In order to construct families of pm valuations which approximate $\nu$ on $K(\rchi)$, 
we can restrict to a subfamily of w key polynomials whose definition differs slightly. 
We call them key polynomials, because we prove that these polynomials are the 
key polynomials defined in \cite{NS18}, \cite{DMS18} 
and \cite{N19} (in \cite{DMS18} they are called abstract key polynomials). 
Here, we do not restrict to transcendental extensions and 
we give a characterization of key polynomials by means of first order formulas 
(in the language of valued fields together 
with a predicate interpreted by the generator of the simple extension). \\
\indent 
We classify the key polynomials according to their degrees, that we call key degrees. On the one hand, 
key degrees $d$ such that the set $\nu(\rchi^d-K_{d-1}[\rchi])$ has a greatest element, on the other hand 
key degrees $d$ such that the set $\nu(\rchi^d-K_{d-1}[\rchi])$ has a no greatest element. This already appeared 
in the works of S.\ MacLane and of M.\ Vaqui\'e.  
They construct the key polynomials by induction, starting from the degree $1$ key polynomials, and 
two cases arise: the set $\nu(\rchi^d-K_{d-1}[\rchi])$ having or not a greatest element. 
We will show that in the first case we have a 
vs defectless subextension, which is generated by a valuation independent element, 
and in the last one this subextension is immediate. So in the first case we say that $d$ is an independent key degree, 
and in the latter case we say that it is an immediate key degree. 
In the construction of key polynomials by induction on the degree, some authors  introduced 
the definition of limit key polynomials. In fact, a key polynomial is a limit key polynomial 
if, and only if, the preceding key degree is an immediate one. \\
\indent  In the present paper we do not require the reader having any previous knowledge of key polynomials. 
As much as possible, we try to use only elementary properties of valuations. 
However, we use the graded algebra associated to a valuation in two proofs. 
Indeed, these graded algebras are a good tool for providing nice proofs of some 
properties of vs defectless extensions. With exception of Section \ref{subsec37}, this paper is self-contained. \\[2mm]
{\bf General properties.}\\
\indent A {\it valuation} on a field $K$ is a morphism $\nu$ from the multiplicative group ($K^*,\cdot)$ to an 
abelian linearly ordered group $(\nu K,+)$ called the {\it valuation group}, which enjoys an additional property. 
We add an element $\infty$ to $\nu K$ ($\infty > \nu K$), we 
set $\nu(0):=\infty$, and we assume that 
$\nu(x+y)\geq \min (\nu(x),\nu(y))$, for every $x$, $y$ in $K$. It follows that 
if $\nu(x)\neq \nu(y)$, then  $\nu(x+y)= \min (\nu(x),\nu(y))$. If $\nu(x)=\nu(y)$, then 
$\nu (x+y)$ can be arbitrarily large. The set $\{x\in K\mid \nu(x)\geq 0\}$ is a local domain and 
$\{x\in K\mid \nu(x)> 0\}$ is its maximal ideal, they are called respectively the {\it valuation ring} 
and the {\it maximal ideal} of the valued field. The quotient of the 
valuation ring by the maximal ideal is a field which  is denoted by $K_{\nu}$, and is called the {\it residue field} of 
the valued field. The {\it residue cha\-rac\-te\-ris\-tic} of $(K,\nu)$ is the 
cha\-rac\-te\-ris\-tic of $K_{\nu}$. We have ${\rm char}\; K_{\nu}=0\Rightarrow {\rm char}\; K=0$. 
For $x$ in the valuation ring 
of $(K,\nu)$, its class modulo the maximal ideal will be denoted by $x_{\nu}$. 
For every subset $M$ of $K$ we denote by $\nu(M)$, or $\nu M$, the set $\{\nu(x)\mid x\in M\}\backslash\{\infty\}$. \\
\indent 
We assume that $\nu$ is a finite rank valuation (the {\it rank} of $\nu$ is the number of proper 
convex subgroups of $\nu K$), 
so that $\nu K$ has countable cofinality. 
If the rank is $1$, then $\nu K$ embeds in the ordered group $(\RR,+)$; we also say that 
$\nu$ is {\it archimedean}. \\
\indent 
An extension $(L|K,\nu)$ of valued fields consists in an extension $L|K$ of fields, 
where $L$ is equipped with a valuation $\nu$. If $L|K$ is a field extension and $\nu$ is a valuation 
on $K$, then there is a least one extension of $\nu$ to $L$. \\
\indent Assume that $M$ is a $K$-module (where $K$ is a field). A mapping $\nu$ from $M$ 
to a linearly ordered group together with an element $\infty$ will be called a 
{\it $K$-module valuation}, if for every $y_1$, $y_2$ in $M$ and $x\in K$: 
$\nu(y_1)=\infty \Leftrightarrow y_1=0$, $\nu(y_1+y_2)\geq \min (\nu(y_1),\nu(y_2))$, 
$\nu(xy_1)=\nu(x)+\nu(y_1)$ (see.\ \cite{FVK}). It follows that its restriction to $K$ 
is a valuation of field.\\
\indent Now, let $(K(\rchi)|K,\nu)$ be a simple extension of valued fields. 
If there is no confusion, then for every polynomial $f(\rchi)$ 
of $K[\rchi]$ we write $f$ instead of $f(\rchi)$. Let $\nu_i$ be a $K$-module valuation on $K[\rchi]$ such that 
$\forall f\in K[\rchi]\; \nu_i(f)\leq \nu(f)$. 
Assume that $\rchi$ is algebraic (so that $K(\rchi)=K[\rchi]$), and that there exists 
$f\in K[\rchi]$ such that $\nu_i(f)<\nu(f)$. By hypothesis, we have $\nu_i(1/f)\leq 
\nu(1/f)$. Hence $\nu_i(f)<\nu(f)=-\nu(1/f)\leq -\nu_i(1/f)$. So $\nu_i(1/f)\neq -\nu_i(f)$. It follows that 
$\nu_i$ doesn't satisfy the rule $\nu_i(fg)=\nu_i(f)+\nu_i(g)$. Now, we will prove in Proposition 
\ref{avprop32} that, in some cases, if ${\rm deg}(f)+{\rm deg}(g)<[K[\rchi]\!:\! K]$, then 
$\nu_i(fg)=\nu_i(f)+\nu_i(g)$. This motivates the following definitions. \\
\indent 
Let $\nu$ be a $K$-module valuation. 
We say that $\nu$ is  {\it partially multiplicative}, or a  {\it pm valuation}, 
if for every $f$, $g$ in $K[\rchi]$ such that $\mbox{deg}(f)+\mbox{deg}(g)<[K[\rchi]\!:\! K]$ we have 
$\nu(fg)=\nu(f)+\nu(g)$. In the case where 
$[K[\rchi]\!:\! K]=\infty$ (i.e.\ $\rchi$ is transcendental), 
we assume in addition that $g\neq 0$ implies $\nu(f/g)=\nu(f)-\nu(g)$, that is, 
$\nu$ is a field valuation in the usual sense. If $\nu$ is a field valuation (in the usual sense), then   
we will also say that it is {\it  multiplicative}.  \\[2mm]
{\bf Summary of the paper.}\\
\indent 
In Section \ref{section1} we generalize the definitions of immediate and dense extensions to 
extensions of modules, equipped with $K$-module valuations. We also review definitions and properties of se\-pa\-ra\-ted sequences 
and vs defectless extensions. 
Then we focus on the case of simple extensions. We characterize immediate simple extensions by the condition that the sets 
$\nu(\rchi^n-K_{n-1}[\rchi])$ have no greatest elements (Proposition \ref{prop111}), and we characterize vs defectless
simple extensions by the condition that the sets 
$\nu(\rchi^n-K_{n-1}[\rchi])$ haves greatest element (Proposition \ref{prop214}). 
Then we recall definitions and properties of the graded algebras associated to valuations. 
In particular, we note that in the case of a generalized power series field, this graded algebra is 
isomorphic to the ring of ge\-ne\-ra\-li\-zed polynomials. 
Furthermore, we prove that a family is separated if, and 
only if, its image in the graded algebra is linearly independent (Proposition \ref{fact220}). 
In Section \ref{section3} we characterize w key polynomials, key polynomials and key 
degrees. In particular, we note that if $\Phi$ is an irreducible polynomial, then it is a w key polynomial if, and only if, 
the quotient field $K[\rchi]/(\Phi)$ is a valued field. We define 
the vs defectless $K$-module valuations $\nu_{\phi}$. We deduce other characterizations of w key polynomials. 
These characterizations will be used in latter sections. 
In Section \ref{subsection32} we compare the definition of w key polynomials with the definition of S.\ MacLane and M.\  Vaqui\'e 
(Proposition \ref{prop320}). 
We set properties of key degrees in Section \ref{section4}. We show that in the case where $\nu(\rchi^d-K_{d-1}[\rchi])$ has a 
maximum, we only need to look at monic polynomials with maximal valuations. 
We characterize key degrees, that we call valuational, by means of the value group (Proposition \ref{avpropdivb}). 
We also focus on conditions for existing only 
a finite number of key degrees, since this makes simpler the constructions of families of pm valuations which approximate 
$\nu$. 
We look at the case of a dense key degree, that is immediate key degrees $d$ 
such that $\nu(\rchi^d-K_{d-1}[\rchi])$ is equal to $\nu(K[\rchi])\backslash\{\infty\}$ (Proposition \ref{prop49}).  
We also give characterizations of the successor of a given key degree (Theorems \ref{thm4xx} and \ref{propdiv}). 
These cha\-rac\-te\-ri\-za\-tions will be useful for proving the equivalence of our definition and the definition of 
abstract key polynomials. In Subsection \ref{subsection34} we give a construction of a 
family of pm valuations which approximate $\nu$ (defined by a family of key polynomials), 
without proceeding by induction on the degree (Theorem \ref{thm46}). 
In Section \ref{subsec37} we show that the key 
polynomials defined in \cite{NS18}, \cite{DMS18} 
and \cite{N19} are the same as the key polynomials defined here (Corollaries \ref{cor351} 
and \ref{cor353}). \\[2mm]
\indent Before starting, let us set some notations. 
Let $l  \in  L$ and $M$ be a $K$-submodule of $L$. We denote by  $\nu(l-M)$ the subset 
$\{ \nu(l-x) \mid x \in  M\}\backslash\{\infty\}$ of $\nu L$. 
For any polynomial $f$, we denote by 
$\nu(f(M))$, or $\nu f(M)$, the subset $\{\nu (f(x))\mid x\in M\}\backslash \{\infty\}$. \\
\indent Note that $\nu(l-M)\cap \nu M$ is an initial segment of $\nu M$. 
\section{Immediate and vs defectless extensions}\label{section1}
In order to show the importance of vs defectless and immediate extensions of valued fields 
as well of the initial segments $\nu(l-K)$, we start by stating some results of model theory 
of extensions of valued fields. 
A valued field $(K,\nu)$ is said to be {\it algebraically maximal} if no 
extension of $\nu$ to an algebraic extension of $K$ is immediate. If the residue characteristic 
is $0$, then being algebraically maximal is equivalent to 
being {\it henselian}, i.e. $\nu$ having a unique extension to any agebraic extension of $K$. 
A famous theorem of J.\ Ax, S.\ Kochen and Y.\ Ershov (\cite{AK65}) says that the elementary 
theory of a henselian valued 
field $(K,\nu)$ of residue characteristic $0$ is determined by the elementary theory of its residue field 
and the elementary theory of its value group. Next, this result was extended to other families of 
algebraically maximal valued fields. 
The aim is to get similar results in the case of extensions of valued fields. 
Now, in \cite{D91} F.\ Delon proved that given a theory $T_F$ of fields of 
characteristic $0$ and a theory $T_V$ of non trivial linearly ordered abelian groups, 
the theory of immediate henselian extensions $(L|K,\nu)$, where $K_{\nu}$ is a model of $T_F$ and 
$\nu K$ is a model of $T_V$, is indecidable and admits $2^{\aleph_0}$ completions.    
This theory also depends at least on the family of all initial segments $\nu(l-K)$, with $l\in L\backslash K$.
Now, if $K$, $L$ are henselian, char$K_{\nu}=0$, and $(L|K,\nu)$ is vs defectless, 
then the first-order theory of 
the extension is determined by the theories of the residual extension and of the 
extension of valued groups. The same holds if $(L|K,\nu)$ is an extension of 
algebraically maximal Kaplansky fields or of real-closed fields 
(see \cite{B1}, \cite{B2}, \cite{L89}, \cite{L03}). 
Furthermore, we have similar results with dense extensions. 
In the case where $(L|K,\nu)$ is an 
extension of valued fields of residue characteristic $0$, there exists a henselian subfield 
$H$, $K\subseteq H\subseteq L$, such that $(H|K,\nu)$ is vs defectless and $(L|H,\nu)$ is 
immediate (see \cite{D88}). This shows that it can be interesting to 
focus on vs defectless and immediate extensions. \\[2mm]
\indent In this section, $L|K$ is an extension of fields and $\nu$ is a $K$-module valuation on $L$. 
\subsection{Immediate extensions} 
If $M$ is a $K$-module, then we assume that $\nu M$ has no greatest element. This holds if $\nu K$ is not 
trivial. Indeed, for every $x\in K$ with $\nu(x)>0$ and $y\in M$, we have $xy\in M$ and 
$\nu(xy)=\nu(x)+\nu(y)>\nu(y)$.
\begin{definitions}\label{def11} 
Let  $M\subseteq N$ be $K$-submodules of $L$, and $l\in L$. \\
\indent 
We say that $l$ is {\it pseudo-limit} over $(M,\nu)$ if $\nu(l-M)\subseteq \nu M$, and $\nu(l-M)$ has no 
maximal element. \\
\indent 
We say that $l$  is {\it limit} over $(M,\nu)$ if $\nu(l-M) =\nu M$.\\
\indent 
The extension $(N|M,\nu)$ is said to be {\it immediate} if every element of $N$ is pseudo-limit over 
$(M,\nu)$. \\
\indent 
The extension $(N|M,\nu)$ is said to be 
 {\it dense} if every element of $N$ is limit over $M$. \end{definitions}
\indent It follows that if $(N|M,\nu)$ is dense, then it is immediate. \\ 
\indent The proof of the following is left to the reader (see for example \cite{L18}). \\
\indent Let $l \in L$ and $M$ be a $K$-submodule of $L$. 
The element $l$ is pseudo-limit over $(M,\nu)$ if, and only if, for every $x\in M$ there exists $y\in M$ such that 
$\nu(l-y)>\nu(l-x)$. 
\begin{notations} For $\gamma$ in $\nu L$ and $M$ a $K$-submodule of $L$, 
let $M_{\gamma,\nu}$ be the $K_{\nu}$-module 
$\{ x\in M\mid \nu(x)\geq \gamma\}/\{x\in M\mid \nu(x)>\gamma\}$. In the case where $\gamma=0$, 
we often write $M_{\nu}$ instead od $M_{0,\nu}$. \\
For $f\in M$ with $\nu(f)\geq \gamma$, we denote by $f_{\gamma,\nu}$ the class of $f$ modulo 
$\{g\in M\mid \nu(g)>\gamma\}$. 
\end{notations}
\begin{remark}\label{extimptim}. \\
\indent 
1) $(N|M,\nu)$ is immediate if, and only if, $\nu N=\nu M$ and, for every $\gamma \in \nu M$, 
$N_{\gamma,\nu}=M_{\gamma,\nu}$. 
  \\
\indent 
2) $(N|K,\nu)$ is immediate  if, and only if, $\nu N=\nu K$ and $N_{\nu}=K_{\nu}$. \end{remark}
\begin{proof} Left to the reader (see for example \cite{L18}). \end{proof} 
\subsection{Vs defectless extensions}\label{subsec21} 
\begin{definitions}\label{def14} (\cite{B1}, \cite{B2}, \cite{BCK18}).  
Let $M$ be a $K$-submodule of $L$. \\ 
\indent 
1) A sequence $(l_1,\dots,l_n)$ of $L$ is said to be {\it separated over} $M$ (or 
$\nu$-separated if necessary) if for every 
$x_{1}, \dots , x_{n}$ in $M$, we have: 
$\nu (x_{1}l_{1}+ \cdots + x_{n}l_{n}) = \min_{1 \leq i \leq n} 
\nu (x_{i}l_{i})$. If $M=K$, then we say separated instead of separated over $K$. 
An infinite sequence is called {\it separated} if every finite subsequence is separated. \\
\indent 
2) The extension $(L|K,\nu)$ is said to be 
{\it vs defectless} if every finitely generated $K$-submodule of $L$ admits a basis which is separated over $K$. 
If this holds, then we say that $\nu$ is {\it vs defectless} (or 
{\it vs defectless over} $K$). \end{definitions}
\indent Note that if $(l_1,\dots,l_n)$ is separated, then $l_1,\dots,l_n$ are linearely independent over $K$. \\[2mm]
%\begin{definition} Let $M$ 
%be a finitely generated $K$-submodule of $L$. If $M$ admits a basis which is separated over $K$, 
%then we say that $M$ is {\it vs defectless}. 
%\end{definition} 
%
\indent In \cite{B1} and \cite{B2} Baur introduces the terms separated families and separated extensions. The authors of 
\cite{BCK18} noted that the word separated can bring confusion in algebraic geometry (for example with separable extension). 
So they prefer use the terms valuation independent sets and vs defectless extensions. Following these authors we use the term 
vs defectless extension instead of separated extension. Concerning Baur's separated families, 
there is a slight difference with the definition of valuation independent families. A valuation independent set is separated, 
but if $B$ is valuation independent, then $B\cup\{ 1\}$ remains separated. 
\begin{example}\label{examp217} Let $(K,\nu)$ be a valued field. Pick some $x$ in $K$, 
some $\gamma$ in 
an extension of $\nu K$, and, for every $x_0,x_1,\dots,x_n$ in $K$, 
set $\nu'(x_n(\rchi-x)^n+\cdots +x_1(\rchi-x)+x_0) :=\min (\nu(x_n)+n\gamma,\dots, \nu(x_1)+
\gamma,\nu(x_0))$. Then one can check that $\nu'$ defines a pm valuation on 
the ring $K[\rchi]$. The pm valuation $\nu'$ is vs defectless 
and $1,(\rchi-x),\dots,(\rchi-x)^n,\dots$ is a separated basis of $(K[\rchi],\nu')$. 
\end{example}
\indent Note that if $\nu''$ is another $K$-module valuation on $K[\rchi]$ which extends $\nu$ and such that 
$\nu(\rchi-x)=\gamma$, then, for every $f$ in $K[\rchi]$, $\nu'(f)\leq \nu''(f)$. \\[2mm]
\indent The following properties provide other examples of vs defectless extensions. \\
$\bullet$ If $(K,\nu)$ is a maximal valued field, then every 
multiplicative extension of $(K,\nu)$ is vs defectless (\cite[Lemma 3]{B1}).\\
$\bullet$ Assume that $\nu$ is multiplicative on $L$, that the residue characteristic of $(K,\nu)$ is $0$ 
and that $(K,\nu)$ is {\it henselian} that is, $\nu$ admits a unique extension to every 
algebraic extension of $K$. Then any algebraic extension 
of $(K,\nu)$ is vs defectless (\cite[Corollaire 7]{D88}). \\
$\bullet$ Assume that 
$L$ is a finite algebraic extension of $K$. Then $(L|K,\nu)$ is 
vs defectless if, and only if, the $K$-module $L$ admits a separated basis. This is an immediate consequence of 
the following lemma. 
\begin{lemma} (\cite[Lemma 5]{D88})\label{D88lm5}. Let $M \subseteq N$ be two  
$K$-submodules of $L$ such that $M$ is finitely generated and $N$ admits a 
separated basis. Then $M$ admits a separated basis. \end{lemma}
\indent The remainder of this subsection is dedicated to properties which make clear the difference between 
immediate and vs defectless extensions. \\[2mm]
\indent Assume that $\nu$ is multiplicative on $L$ 
and that $(K,\nu)$ is henselian of residue characteristic $0$. Then $(L|K,\nu)$ is vs defectless 
if, and only if, $L$ is linearly disjoint over $K$ from every immediate extension of $(K,\nu)$ (\cite[p.\ 421]{D88}). 
\begin{theorem}\label{caracsep} (\cite[p.\ 421]{D88}, \cite{BCK18}) Let $N$ be a $K$-submodule of $L$. Then, 
$(N|K,\nu)$ is a vs defectless extension if, and only if, for every finitely generated 
$K$-submodule $M$ of $N$ and $l\in N\backslash M$, the set $\nu(l-M)$ has a maximal element. 
\end{theorem}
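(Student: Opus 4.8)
The plan is to prove the two implications separately, using Lemma \ref{lem11} (and its contrapositive) to translate ``$\nu(l-M)$ has no maximal element'' into the pseudo-limit language, and to reduce the general finitely generated case to the one-dimensional extension $M \subseteq M + Kl$.

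First I would prove the easy direction: if $(N|K,\nu)$ is separate, then for every finitely generated submodule $M \subseteq N$ and $l \in N \setminus M$, the set $\nu(l-M)$ has a maximal element. Consider the finitely generated submodule $M' = M + Kl$. By hypothesis $M'$ has a separate basis $\mathcal{B} = (l_1,\dots,l_m)$ over $K$. Write $l = \sum x_i l_i$ and, for arbitrary $y \in M \subseteq M'$, write $y = \sum y_i l_i$; then $\nu(l-y) = \min_i \nu((x_i - y_i) l_i)$. One wants to show this is bounded above as $y$ ranges over $M$. The point is that $l \notin M$, so at least one coordinate, say the one indexed $i_0$, cannot be matched: more precisely, since $M$ is a proper subspace not containing $l$, there is a linear functional consideration showing that for at least one index $i_0$ the value $x_{i_0} - y_{i_0}$ stays away from being ``improvable'' — I would argue that $\{(y_i)_{i} : y \in M\}$ is a proper subspace of $K^m$ missing $(x_i)_i$, hence some coordinate projection (after a change of basis adapted to $M$) is pinned, giving a uniform lower bound $\min_i \nu(x_i - y_i) \le C$ independent of $y$, so $\nu(l-M)$ is bounded; the actual maximum is then attained because $\nu(l-M) \cap \nu M$ is an initial segment (noted in the text) of the finitely-generated-in-value set $\nu M'$, and one chooses $y$ realizing the supremum coordinatewise. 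Actually the cleanest route: work with a basis of $M'$ whose first $\dim M$ elements form a basis of $M$; then $l = \sum_{i \le \dim M} x_i l_i + \sum_{i > \dim M} x_i l_i$ with not all the high-index $x_i$ zero, and $\nu(l - y) = \min(\dots, \min_{i > \dim M}\nu(x_i l_i))$ which is a fixed quantity bounding everything, and it is attained by matching the low-index coordinates exactly (possible since those $l_i$ span $M$).

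For the converse, suppose that for every finitely generated $M \subseteq N$ and $l \in N \setminus M$ the set $\nu(l - M)$ has a maximal element; I must produce a separate basis for an arbitrary finitely generated $M \subseteq N$. I would do this by induction on $\dim_K M$, building the basis one vector at a time. Suppose $(l_1,\dots,l_k)$ is already separate over $K$, spanning $M_k$, and $M_{k+1} = M_k + K l$ with $l \notin M_k$. By hypothesis $\nu(l - M_k)$ has a maximal element, realized at some $y^* \in M_k$; set $l_{k+1} = l - y^*$. I claim $(l_1,\dots,l_k,l_{k+1})$ is separate over $K$. Given $x_1,\dots,x_{k+1}$ in $K$, if $x_{k+1} = 0$ we are done by the inductive hypothesis. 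If $x_{k+1} \ne 0$, then $\sum_{i\le k} x_i l_i + x_{k+1} l_{k+1} = x_{k+1}(l_{k+1} + x_{k+1}^{-1}\sum_{i \le k} x_i l_i)$, and $l_{k+1} + x_{k+1}^{-1}\sum x_i l_i = l - (y^* - x_{k+1}^{-1}\sum x_i l_i)$ with $y^* - x_{k+1}^{-1}\sum x_i l_i \in M_k$; by maximality of $\nu(l_{k+1}) = \nu(l - y^*)$ over $\nu(l - M_k)$ we get $\nu(l_{k+1} + x_{k+1}^{-1}\sum x_i l_i) \le \nu(l_{k+1})$. Combined with the reverse inequality $\nu(l_{k+1} + x_{k+1}^{-1}\sum x_i l_i) \ge \min(\nu(l_{k+1}), \min_i \nu(x_{k+1}^{-1} x_i l_i))$ and a short case analysis (separating whether $\nu(x_{k+1}l_{k+1})$ is $\le$ or $>$ than $\min_{i\le k}\nu(x_i l_i)$, using that $(l_1,\dots,l_k)$ is already separate for the latter piece), one concludes $\nu(\sum_{i\le k+1} x_i l_i) = \min_i \nu(x_i l_i)$, as required.

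The main obstacle is the first (``easy'') direction: turning ``$l$ is not in the span of $M$'' into a \emph{uniform} upper bound on $\nu(l-y)$ over all $y \in M$, rather than a bound depending on $y$. The resolution is the choice of a basis of $M' = M + Kl$ extending a basis of $M$, so that the ``$l$-component'' of $l$ relative to that basis is a nonzero constant whose valuation (times the corresponding basis vector) caps $\nu(l-y)$ for every $y$; separateness of that basis is exactly what makes the cap rigorous. In the converse direction the only delicate point is the case analysis establishing the inequality $\nu(\sum x_i l_i) = \min \nu(x_i l_i)$ when the top coefficient is nonzero, but this is routine given the maximality of $\nu(l_{k+1})$ and the inductive separateness; no deeper input (graded algebras, pseudo-Cauchy sequences) is needed here.
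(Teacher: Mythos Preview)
Your converse direction (building a separate basis by induction, adjoining $l_{k+1}=l-y^*$ where $y^*$ realizes $\max\nu(l-M_k)$, then doing the three-case analysis on $\gamma$ versus $\nu(x_{k+1}l_{k+1})$) is correct and is exactly the paper's argument.

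The forward direction, however, has a real gap. You correctly identify that what you need is a \emph{separate} basis of $M'=M+Kl$ whose first $\dim M$ vectors span $M$; once you have that, the last coordinate of $l-y$ is pinned and the bound drops out. But you never justify why such a basis exists. Starting from an arbitrary separate basis $(l_1,\dots,l_m)$ of $M'$ and then performing ``a change of basis adapted to $M$'' destroys separateness in general, and your ``linear functional consideration'' does not work: the coordinate space of $M$ is a hyperplane in $K^m$, so every \emph{individual} coordinate projection can be surjective---no single index $i_0$ is pinned. What is true is only that all coordinates cannot be matched \emph{simultaneously}, and turning that into a valuation bound is exactly the nontrivial step.

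In the paper this step is isolated as Lemma~\ref{ajoutelt}: if the extension is separate, then any separate basis of $M$ extends to a separate basis of $M\oplus Kl$. The proof is not a one-liner; it compares two separate bases (one of $M$, one of $M'$) via the equivalence relation ``same valuation modulo $\nu K$'', locates the class that gains an element, and shows by an elimination argument that some vector in that larger class can be adjoined to the basis of $M$ while preserving separateness. You should either invoke that lemma or supply an equivalent argument; as written, your forward direction assumes its conclusion.
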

This theorem has been stated in \cite[p.\ 421]{D88}, assuming that $(K,\nu)$ is henselian and 
char$(K_{\nu})=0$. A general proof has been given independently in \cite{BCK18} and \cite{L18}. \\
\indent We know that if $L|K$ is finite and $\nu$ is multiplicative, then $1\leq 
[L_{\nu}\!:\! K_{\nu}](\nu L\!:\!\nu K)\leq [L\!:\! K]$. Furthermore, by Remark \ref{extimptim}, 
$(L|K,\nu)$ is immediate if, and 
only if, $1=[L_{\nu}\!:\! K_{\nu}](\nu L\!:\!\nu K)$. The following theorem shows that $(L|K,\nu)$ 
being vs defectless can be seen as the opposite case. It has been proved  independently in \cite{BCK18} and \cite{L18}.
\begin{theorem}\label{relatdegsep} Assume that 
$L|K$ is a finite algebraic extension of fields and that $\nu$ is multiplicative on $L$. 
Then $(L|K,\nu)$ is vs defectless if, and only if, $[L\!:\! K]=[L_{\nu} \!:\! K_{\nu}](\nu L\!:\!\nu K)$. 
\end{theorem}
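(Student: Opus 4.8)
The plan is to prove both implications by combining the characterization of separateness via maximal elements (Theorem \ref{caracsep}) with the "tower decomposition" of an arbitrary finite extension into an immediate part and a separate part, analogous to the residue-field/value-group factorization $[L\!:\!K]\geq[L_\nu\!:\!K_\nu](\nu L\!:\!\nu K)$. For the direction $(\Leftarrow)$, suppose $[L\!:\!K]=[L_\nu\!:\!K_\nu](\nu L\!:\!\nu K)$. I would first build an explicit candidate separate basis: choose $u_1,\dots,u_r$ in the valuation ring of $L$ whose residues form a $K_\nu$-basis of $L_\nu$, and choose $w_1,\dots,w_s$ in $L^*$ whose values represent the distinct cosets of $\nu K$ in $\nu L$; then the $rs$ products $u_iw_j$ are linearly independent over $K$ (the standard fundamental-inequality argument shows any $K$-linear combination has valuation equal to the minimum of the summand valuations, because elements with values in distinct cosets of $\nu K$ can never cancel, and within a fixed coset the residues of the $u_i$ are independent). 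Since $rs=[L_\nu\!:\!K_\nu](\nu L\!:\!\nu K)=[L\!:\!K]$, these $rs$ elements form a basis, and the valuation computation just sketched shows it is $\nu$-separate. Hence $(L|K,\nu)$ is separate.

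For the direction $(\Rightarrow)$, assume $(L|K,\nu)$ is separate and let $\mathcal B=(b_1,\dots,b_m)$ be a $\nu$-separate basis of the $K$-module $L$ (here $m=[L\!:\!K]$, finite). For each $i$, write $\nu(b_i)$; these values, taken modulo $\nu K$, hit at most $(\nu L\!:\!\nu K)$ cosets. Group the basis elements by the coset of $\nu K$ to which their value belongs. Within one such group, say $b_i$'s with $\nu(b_i)\in\gamma+\nu K$, pick scalars $c_i\in K$ with $\nu(c_i)=\gamma-\nu(b_i)$ (possible once one representative of the coset is realized, which it is since $\gamma\in\nu L$ and separateness forces $\nu L=\bigcup_i(\nu(b_i)+\nu K)$), so $\nu(c_ib_i)=\gamma$ for all $i$ in the group; rescaling, I may assume every basis element in a group has the same valuation $\gamma$, and then dividing by a fixed one I may assume $\gamma=0$, i.e. each group lies in the valuation ring with a residue in $L_\nu$. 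The separateness of $\mathcal B$ forces, for any $K$-combination of elements in one group with all $\nu(c_ib_i)=0$, that $\nu$ of the sum is $0$ unless all $c_i$ have positive value — equivalently the residues of the basis elements in a single group are $K_\nu$-linearly independent in $L_\nu$. So the size of each group is at most $[L_\nu\!:\!K_\nu]$, and the number of groups is at most $(\nu L\!:\!\nu K)$; therefore $m\leq[L_\nu\!:\!K_\nu](\nu L\!:\!\nu K)$. Combined with the fundamental inequality $[L_\nu\!:\!K_\nu](\nu L\!:\!\nu K)\leq[L\!:\!K]=m$, equality holds.

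The main obstacle is the bookkeeping in the $(\Rightarrow)$ direction: making precise that a $\nu$-separate basis can be rescaled group-by-group into a basis adapted to the value-group filtration, and that within a group the residues are genuinely independent over $K_\nu$ (not merely spanning), using only the definition of separateness, i.e. that $\nu(\sum c_ib_i)=\min_i\nu(c_ib_i)$. The subtlety is that separateness is a statement about minima of valuations, and one must translate "the minimum is attained without cancellation" into the residue statement; concretely, if a nontrivial residue relation held, one could produce scalars $c_i$, not all of positive value, with $\nu(\sum c_ib_i)>0=\min_i\nu(c_ib_i)$, contradicting separateness. I also need the elementary fact — already available since $\nu$ is multiplicative on the finite extension $L|K$ — that $\nu L/\nu K$ is finite and $L_\nu/K_\nu$ is finite with product bounded by $[L\!:\!K]$, which I would cite rather than reprove. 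Everything else is the routine valuation arithmetic recalled in the "General properties" section.
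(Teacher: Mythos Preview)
Your proof is correct and follows essentially the same approach as the paper: for $(\Leftarrow)$ both build the explicit basis $\{u_iw_j\}$ from residue representatives and coset representatives and verify separateness directly, and for $(\Rightarrow)$ both partition a separate basis by value coset modulo $\nu K$, rescale within each group, and use that separateness forces the residues in a group to be $K_\nu$-independent (the paper's Lemmas \ref{BauranglaisS1} and \ref{BauranglaisS4}). The only minor difference is that the paper argues each group has size \emph{exactly} $r$ (via a contradiction showing a smaller group would allow extending the separate basis) and that there are exactly $q$ groups, whereas you bound each group by $r$ and the number of groups by $q$ and then invoke the fundamental inequality $rq\le[L\!:\!K]$ to close; your shortcut is a legitimate simplification of the same argument.
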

\indent Let $(L|K,\nu)$ be a finite extension of valued fields (where $\nu$ is 
multiplicative). We say that $(L|K,\nu)$ is {\it defectless} if 
$\displaystyle{\frac{[L\!:\! K]}{[L_{\nu} \!:\! K_{\nu}](\nu L\!:\!\nu K)}}$ is equal to the number 
of extensions of $\nu_{|K}$ to $L$. \\ 
\indent As a corollary of Theorem \ref{relatdegsep} we get that 
 if $(K,\nu)$ is henselian, then every defectless finite algebraic extension of $(K,\nu)$ is 
vs defectless. 
\subsection{Extensions generated by one element}\label{subsec22} 
In this subsection $\rchi$ is algebraic or transcendental over $K$. Recall that if $d$ is a non-negative integer, 
then $K_d[\rchi]$ denotes the $K$-module of polynomials of degree at most $d$. 
Note that $K_0[\rchi]=K$ and in the transcendental case we let $K_{\infty}[\rchi]=K[\rchi]$. 
\begin{proposition}\label{prop111} Let $d<d'$ in $\NN\cup \{\infty\}$ such that 
if $\rchi$ is algebraic over $K$, then $d'<[K[\rchi]\!:\! K]$. \\ 
1) $(K_{d'}[\rchi]|K_d[\rchi],\nu)$ is immediate if, and only if, for every integer $n\in \{d+1,\dots,d'\}$, 
$\nu(\rchi^n-K_{n-1}[\rchi])$ has no maximal element. \\
2) Assume that $\nu$ is a pm valuation and that $\nu K_d[\rchi]$ is a subgroup of $\nu(K[\rchi])$. Then the extension 
$(K_{d'}[\rchi]|K_d[\rchi],\nu)$ is dense if, and only if, for every integer $n\in \{d+1,\dots,d'\}$, 
$\nu(\rchi^n-K_{n-1}[\rchi])=\nu K_d[\rchi]$.
\end{proposition} 
\begin{proof} 
1) $\Rightarrow$. Assume that, for some integer $n\in \{d+1,\dots,d'\}$, 
$\nu(\rchi^n-K_{n-1}[\rchi])$ has a greatest element $\nu(f)$. 
If $\nu(f)$ is not in $\nu K_d[\rchi]$, then $\nu K_{d'}[\rchi]\neq \nu K_d[\rchi]$, and, 
by Remark \ref{extimptim} 1), the 
extension is not immediate. Suppose that $\nu(f)\in \nu K_d[\rchi]$, say $\nu(f)=\gamma$. 
For every $g\in K_d[\rchi]$ we have $\nu(f-g)\leq \nu(f)$, hence $f_{\gamma,\nu}\neq 
g_{\gamma,\nu}$. It follows that $(K_{d'}[\rchi])_{\gamma,\nu}\neq (K_d[\rchi])_{\gamma,\nu}$, so the 
extension is not immediate. \\ 
\indent $\Leftarrow$. Assume that $(K_{d'}[\rchi]|K_d[\rchi],\nu)$ is not immediate, and let $n$ be 
the smallest integer such that, for some polynomial $f$ of degree $n$, 
either $\nu(f)\notin \nu K_d[\rchi]$ or, for every $g\in K_d[\rchi]$, $f_{\gamma,\nu}\neq 
g_{\gamma,\nu}$, where $\gamma=\nu(f)$. Note that, by dividing $f$ by an element of $K$, we can assume that  
$f$ is a monic polynomial of degree $n$. \\ 
\indent First assume that $\gamma \notin \nu K_d[\rchi]$, and let 
$g$ be a monic polynomial of degree $n$. Then deg$(f-g)<n$, hence, by minimality of $n$, 
$\nu(f-g)\in \nu K_d[\rchi]$. So 
$\nu(f-g)\neq \nu(f)$. It follows: $\nu(g)=\min (\nu(g-f),\nu(f))\leq \nu (f)$, 
which proves 
that $\nu(f)$ is the greatest element of $\nu(\rchi^n-K_{n-1}[\rchi])$. \\ 
\indent Assume that $\gamma=\nu(f)\in \nu K_d[\rchi]$, and, for every $g\in K_d[\rchi]$, $f_{\gamma,\nu}\neq 
g_{\gamma,\nu}$. Let $g$ be a monic polynomial of degree $n$. If 
$\nu(g-f)\neq \gamma$, then $\nu(g)=\min (\nu(g-f),\nu(f))\leq \nu(f)$. Now, assume that 
$\nu(g-f)=\gamma$. We have deg$(f-g)<n$, hence, by minimality of $n$, $(g-f)_{\gamma,\nu}=h_{\gamma,\nu}$ 
for some $h\in K_d[\rchi]$. Therefore we have $f_{\gamma,\nu}\neq 
(g-f)_{\gamma,\nu}$. So $\nu(g)=\nu(g-f+f)=\min (\nu(g-f),\nu(f))\leq \nu(f)$. Consequently, $\nu(f)$ is 
the greatest element of $\nu(\rchi^n-K_{n-1}[\rchi])$. \\ 
\indent 2) If $(K_{d'}[\rchi]|K_d[\rchi],\nu)$ is dense, then it is immediate. Hence for every integer $n\in \{d+1,\dots,d'\}$ 
we have: 
$\nu(\rchi^n-K_{n-1}[\rchi])\subseteq \nu K_d[\rchi]$. Furthermore: $\nu(\rchi^{d+1}-K_d[\rchi])=\nu K_d[\rchi]$. Now, 
for every integer $n\in \{d+2,\dots,d'\}$ we have $\nu(\rchi^n-K_{n-1}[\rchi])\supseteq \nu(\rchi^n-\rchi^{n-(d+1)}K_d[\rchi])=
(n-(d+1))\nu(\rchi)+\nu(\rchi^{d+1}-K_d[\rchi])=\nu K_d[\rchi]$ (since $\nu K_d[\rchi]$ is a subgroup). 
It follows: $\nu(\rchi^n-K_{n-1}[\rchi])=\nu K_d[\rchi]$. \\ 
\indent Conversely, assume that for every integer $n\in \{d+1,\dots,d'\}$ we have $\nu(\rchi^n-K_{n-1}[\rchi])=\nu K_d[\rchi]$. 
Then in 
particular it has no greatest element. So by a) $(K_{d'}[\rchi]|K_d[\rchi],\nu)$ is immediate. 
Let $f\in K_{d'}[\rchi]$, $n$ be its degree and $x_n$ be the coefficient of $\rchi^n$ in $f$. Without loss of 
generality we can assume that the integer $n$ belongs to $\{d+1,\dots, d'\}$. We show that for every $\gamma \in \nu K_d[\rchi]$ 
there is $g\in K_d[\rchi]$ such that 
$\nu(f-g)>\gamma$. For every $g_{n-1}\in K_{n-1}[\rchi]$ we have $f/x_n-g/x_n\in \rchi^n-K_{n-1}[\rchi]$. Hence 
since $\nu(\rchi^n-K_{n-1}[\rchi])=\nu K_d[\rchi]$ which is a subgroup, 
we can choose $g_{n-1}\in K_{n-1}[\rchi]$ such that $\nu((f/x_n)-(g_{n-1}/x_n))>\gamma-\nu(x_n)$. Hence 
$\nu(f-g_{n-1})>\gamma$. In the same way, for $d\leq j\leq n-2$ we get $g_j\in K_j[\rchi]$ such that 
$\nu(g_{j+1}-g_j)>\gamma$. Therefore $\nu(f-g_d)=\nu(f-g_{n-1}+\cdots +g_{d+1}-g_d)\geq 
\min (\nu(f-g_{n-1}),\dots,  \nu(g_{d+1}-g_d))> \gamma$. So every 
element of $K_{d'}[\rchi]$ is limit over $K_d[\rchi]$. This implies that $(K_{d'}[\rchi]|K_d[\rchi],\nu)$ is dense. 
\end{proof}
\indent We turn to vs defectless extensions, and we start with properties of separated sequences. 
\begin{lemma}\label{BauranglaisS1} (\cite[p.\ 676]{B2}) 
Let $(l_1,\dots,l_n)$ be a separated sequence of
elements of $L$, $y$ in $L\backslash \{0\}$ and $k_1,\dots,k_n$ in $K\backslash \{0\}$. 
The following holds. \\
Every subsequence of $(l_1,\dots,l_n)$ is separated. \\
The sequence $(k_1l_1,\dots,k_nl_n)$ is separated. \\
If $\nu$ is multiplicative on $L$, then the sequence $(y l_1,\dots,y l_n)$ is separated. 
\end{lemma}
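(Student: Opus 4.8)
The three assertions of Lemma~\ref{BauranglaisS1} follow directly from the definition of a separate sequence, once one checks that the ``$\geq$'' inequality $\nu(\sum x_il_i)\geq \min_i \nu(x_il_i)$ is automatic and only the reverse inequality needs work. For the first assertion, let $(l_{i_1},\dots,l_{i_k})$ be a subsequence and $x_{i_1},\dots,x_{i_k}$ in $K$; I would apply the separateness of the full sequence to the linear combination in which all the missing coefficients are taken to be $0$, so that $\nu(x_{i_1}l_{i_1}+\cdots+x_{i_k}l_{i_k})=\min\{\nu(x_jl_j):1\le j\le n,\ x_j\ne 0\}=\min_{1\le t\le k}\nu(x_{i_t}l_{i_t})$ (with the usual convention that $\nu(0\cdot l_j)=\infty$ does not affect the minimum). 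Thus the subsequence is separate.

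\textbf{Second assertion.} For $(k_1l_1,\dots,k_nl_n)$, given $x_1,\dots,x_n$ in $K$ I would write $x_i(k_il_i)=(x_ik_i)l_i$ and invoke separateness of $(l_1,\dots,l_n)$ with coefficients $x_ik_i$: $\nu\bigl(\sum_i x_i(k_il_i)\bigr)=\nu\bigl(\sum_i (x_ik_i)l_i\bigr)=\min_i\nu\bigl((x_ik_i)l_i\bigr)=\min_i\nu\bigl(x_i(k_il_i)\bigr)$, using only the additivity rule $\nu(x_ik_i)=\nu(x_i)+\nu(k_i)$ of a $K$-module valuation on the field factor. Note this needs no hypothesis on $\nu$ beyond its being a $K$-module valuation, and it does not even require the $k_i$ nonzero (a zero $k_i$ simply drops the $i$-th term on both sides).

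\textbf{Third assertion.} For $(yl_1,\dots,yl_n)$ with $\nu$ multiplicative on $L$, I would compute $\nu\bigl(\sum_i x_i(yl_i)\bigr)=\nu\bigl(y\sum_i x_il_i\bigr)=\nu(y)+\nu\bigl(\sum_i x_il_i\bigr)$, where the last equality uses multiplicativity of $\nu$ on $L$ (so that $\nu(yz)=\nu(y)+\nu(z)$ for $z\in L$); then applying separateness of $(l_1,\dots,l_n)$ gives $\nu(y)+\min_i\nu(x_il_i)=\min_i\bigl(\nu(y)+\nu(x_il_i)\bigr)=\min_i\nu\bigl(x_i(yl_i)\bigr)$, where the interchange of $\nu(y)+{}$ with $\min$ is legitimate because translation is order-preserving in the value group. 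If $y=0$ the statement is vacuous (or trivially true), so one may assume $y\ne 0$, hence $\nu(y)\in\nu L$.

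\textbf{Main obstacle.} There is essentially no obstacle here: each part is a one-line manipulation of the defining identity using the module-valuation axioms, and the only point demanding a word of care is that the third part genuinely requires $\nu$ to be multiplicative on all of $L$ (not merely a $K$-module valuation), since we must pull the scalar $y\in L$ out of the valuation; this is exactly why the hypothesis ``$\nu$ multiplicative on $L$'' is attached only to the last assertion.
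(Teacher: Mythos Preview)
Your proof is correct. The paper does not supply its own proof of this lemma: it is stated with a citation to Baur (\cite{B 2}, p.~676) and used without argument. Your direct verification from the definition is exactly what one would expect, and each of the three manipulations is valid; in particular you are right that only the third assertion needs multiplicativity of $\nu$ on $L$, since pulling out $y\in L$ from inside $\nu$ is not guaranteed by the $K$-module valuation axioms alone.
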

\begin{lemma}\label{BauranglaisS4} (\cite{B1}, \cite[(S4), p.\ 676]{B2}) 
Let $(l_1,\dots,l_n)$ be a sequence of elements of $L$ such that: 
$\forall i$, 
$1 \leq i \leq n$, $\nu (l_i)=0$. Then $(l_1,\dots,l_n)$ is separated if, and only if, 
$(l_1)_{\nu}, \dots ,(l_n)_{\nu}$ are linearly 
independent over $K_{\nu}$. This can be generalized in the following way. If 
$\nu(l_1)=\cdots=\nu(l_n)=g$, then $(l_1,\dots,l_n)$ is separated if, and only if, for every 
$x_1,\dots,x_n$ in $\{x\in K\mid \nu(x)=0\}\cup\{0\}$, either 
$\nu(x_1l_1+\cdots+x_nl_n)=g$ or $x_1=\cdots=x_n=0$. \end{lemma}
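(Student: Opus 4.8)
The plan is to prove Lemma \ref{BauranglaisS4} by reducing everything to the residue field, using the defining properties of a $K$-module valuation together with the normalization $\nu(l_i)=0$. First I would treat the case $\nu(l_1)=\cdots=\nu(l_n)=0$. For the forward implication, suppose $(l_1,\dots,l_n)$ is separate but $(l_1)_\nu,\dots,(l_n)_\nu$ are linearly dependent over $K_\nu$; then there are $x_1,\dots,x_n$ in the valuation ring of $K$, not all with positive value, such that $\sum (x_i)_\nu (l_i)_\nu=0$ in $K_\nu$, i.e. $\nu(\sum x_i l_i)>0$. After discarding the terms with $\nu(x_i)>0$ (which only raises the minimum) we may assume each surviving $x_i$ is a unit, so $\min_i \nu(x_i l_i)=0<\nu(\sum x_i l_i)$, contradicting separateness. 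For the converse, assume $(l_1)_\nu,\dots,(l_n)_\nu$ are $K_\nu$-linearly independent and take arbitrary $x_1,\dots,x_n\in K$. Let $g=\min_i\nu(x_il_i)=\min_i\nu(x_i)$; dividing by some $x_j$ with $\nu(x_j)=g$ we reduce to the case $g=0$, where each $(x_i)_\nu$ lies in $K_\nu$ and at least one is nonzero; then $\sum (x_i)_\nu(l_i)_\nu\neq 0$ by independence, so $\nu(\sum x_il_i)=0=\min_i\nu(x_il_i)$, as required.

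Next I would handle the stated generalization to the case $\nu(l_1)=\cdots=\nu(l_n)=g$ for some $g\in\nu L$ (not necessarily $0$, and without assuming $g\in\nu K$, which is why we cannot simply rescale by an element of $K$). Here the cleanest route is to observe that separateness of $(l_1,\dots,l_n)$ is, by definition, the statement that $\nu(\sum x_il_i)=\min_i\nu(x_il_i)$ for all $x_1,\dots,x_n\in K$; since $\nu(x_il_i)=\nu(x_i)+g$, the common shift by $g$ plays no role in comparing these quantities, so separateness is equivalent to: for all $x_1,\dots,x_n\in K$, $\nu(\sum x_il_i)-g=\min_i\nu(x_i)$. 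Splitting off the indices with $\nu(x_i)$ strictly larger than the minimum $g_0:=\min_i\nu(x_i)$, which do not affect the minimum, one reduces to the case where every nonzero $x_i$ satisfies $\nu(x_i)=g_0$. Factoring out an $x_j$ with $\nu(x_j)=g_0$ would again leave a coefficient outside the valuation ring unless $g_0\in\nu K$, so instead I argue directly: separateness fails precisely when there exist such $x_i$ (each of value $g_0$ or zero, not all zero) with $\nu(\sum x_il_i)>g_0+g$, which is exactly the negation of the condition in the statement. This gives the equivalence without ever needing $g\in\nu K$.

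The main obstacle is the bookkeeping in the generalized case: one must be careful that the reduction ``discard the terms whose value exceeds the minimum'' is valid, i.e. that if $\nu(x_kl_k)>\min_i\nu(x_il_i)$ then removing the $k$-th term changes neither side of the separateness identity — this uses the ultrametric inequality and the fact that $\nu(y+z)=\nu(y)$ when $\nu(z)>\nu(y)$, both of which hold for $K$-module valuations. A second subtlety is that in the normalized case one should confirm that the passage between ``$\sum x_il_i$ has positive value'' and ``$\sum(x_i)_\nu(l_i)_\nu=0$ in $K_\nu$'' is an equivalence, which it is precisely because each $l_i$ and each surviving $x_i$ has value $0$, so their classes are well-defined nonzero elements and the reduction map is a ring homomorphism on the valuation ring. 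Everything else is a direct unwinding of definitions, and since the basic case is essentially \cite{B 1}, \cite{B 2} I would keep the write-up short, citing those sources for the core and spelling out only the reduction steps needed for the version with $\nu(l_i)=g$.
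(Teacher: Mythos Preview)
The paper does not give its own proof of this lemma; it simply cites Baur \cite{B 1}, \cite{B 2}. So there is nothing to compare against, and your argument stands on its own. The treatment of the case $\nu(l_i)=0$ is correct and standard: the passage between ``$\nu(\sum x_il_i)>0$'' and ``$\sum (x_i)_\nu(l_i)_\nu=0$'' is exactly the equivalence you state, and the division-by-$x_j$ reduction for the converse is fine since $K$ is a field.

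One small confusion in the generalized case: you write that factoring out an $x_j$ with $\nu(x_j)=g_0$ ``would leave a coefficient outside the valuation ring unless $g_0\in\nu K$''. But $x_j\in K$, so $g_0=\nu(x_j)\in\nu K$ automatically, and after dividing by $x_j$ every surviving coefficient $x_i/x_j$ has value $\nu(x_i)-g_0\ge 0$. Thus the factoring-out argument works without obstruction and immediately reduces your ``there exist $x_i$ of value $g_0$ or zero with $\nu(\sum x_il_i)>g_0+g$'' to the stated condition with coefficients of value $0$. Your ``direct'' argument as written stops one step short of this identification; inserting the division by $x_j$ closes it. (The point you correctly flag is that one cannot rescale the \emph{$l_i$} to value $0$ when $g\notin\nu K$; but rescaling the \emph{coefficients} to value $0$ is always available.)
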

\begin{proposition}\label{BauranglaisS5}
Let $l_{i 1}, \dots ,l_{i n_i}$, 
$1 \leq i \leq p$, be sequences which satisfy:
$$\forall i, 1 \leq i \leq p, \forall j, 1 \leq j \leq n_i,  
\nu (l_{i j}) = \nu (l_{i 1}) < \infty$$
and the $ \nu (l_{i 1})$ are pairwise non-congruent modulo $\nu K$.  
The following assertions are equivalent. \\
The sequence $l_{1 1}, \dots ,l_{1 n_1},l_{2 1}, \dots ,
l_{2 n_2},\dots,l_{p 1}, \dots ,l_{p n_p}$ is separated. \\
For every $i$ in $\{1,\dots,p\}$, $l_{i1},l_{i2}, \dots,l_{in_i}$ 
is separated. \\ 
If $\nu$ is multiplicative on $L$ then this condition is equivalent to: \\
for every $i$ in $\{1,\dots,p\}$, $1,(l_{i2} l_{i1}^{-1})_{\nu}, \dots ,
(l_{in_i} l_{i1}^{-1})_{\nu}$ are linearly independent over $K_{\nu}$.\end{proposition}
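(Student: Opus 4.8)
The plan is to establish the two stated equivalences by proving the implications in a convenient order: that the concatenated sequence being separate implies each block is separate is immediate, the converse is the heart of the argument, and the residue-field reformulation is obtained by reducing each block to the valuation-$0$ case.

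First I would note that the implication ``the concatenation is separate $\Rightarrow$ each $l_{i1},\dots,l_{in_i}$ is separate'' is just the fact, recorded in Lemma \ref{BauranglaisS1}, that a subsequence of a separate sequence is separate, applied to the contiguous blocks $l_{i1},\dots,l_{in_i}$.

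For the converse, assume that for each $i$ the block $(l_{i1},\dots,l_{in_i})$ is separate. Fix $x_{ij}\in K$ and put $S_i=\sum_{j=1}^{n_i}x_{ij}l_{ij}$, so that $\sum_{i,j}x_{ij}l_{ij}=\sum_{i=1}^{p}S_i$. Using separateness of block $i$ together with the hypothesis $\nu(l_{ij})=\nu(l_{i1})$ for all $j$, whenever $S_i\neq 0$ we have $\nu(S_i)=\min_{1\leq j\leq n_i}\nu(x_{ij}l_{ij})=\nu(l_{i1})+\min_{1\leq j\leq n_i}\nu(x_{ij})$, and $\min_j\nu(x_{ij})\in\nu K$; hence $\nu(S_i)\equiv\nu(l_{i1})\pmod{\nu K}$. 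Since the $\nu(l_{i1})$ are pairwise non-congruent modulo $\nu K$, the valuations of the nonzero $S_i$ are pairwise distinct, so $\nu\bigl(\sum_i S_i\bigr)=\min\{\nu(S_i):S_i\neq 0\}$. (If all $S_i=0$, then by Remark \ref{seplintind} each block being separate forces all $x_{ij}=0$, and both sides of the equality to be proved are $\infty$.) Combining these, $\nu\bigl(\sum_{i,j}x_{ij}l_{ij}\bigr)=\min_{i,j}\nu(x_{ij}l_{ij})$, which is precisely separateness of the concatenation.

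Finally, assuming $\nu$ multiplicative, I would treat the residue-field reformulation block by block. Since multiplication by a fixed nonzero element of $L$ preserves separateness in both directions (Lemma \ref{BauranglaisS1}), the block $(l_{i1},\dots,l_{in_i})$ is separate if and only if $(1,l_{i1}^{-1}l_{i2},\dots,l_{i1}^{-1}l_{in_i})$ is; as all these elements have valuation $0$ by hypothesis, Lemma \ref{BauranglaisS4} shows this holds if and only if $1,(l_{i2}l_{i1}^{-1})_\nu,\dots,(l_{in_i}l_{i1}^{-1})_\nu$ are linearly independent over $K_\nu$. The only non-formal point in the whole argument is the observation that separateness inside each block, combined with the ``distinct cosets modulo $\nu K$'' hypothesis, forces the partial sums $S_i$ to have pairwise distinct valuations; granting that, the ultrametric inequality finishes the proof.
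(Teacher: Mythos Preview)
Your proof is correct and follows essentially the same approach as the paper: both use Lemma~\ref{BauranglaisS1} for the easy direction, then for the converse observe that separateness of each block forces the partial sums $S_i$ to have valuations congruent to $\nu(l_{i1})$ modulo $\nu K$, hence pairwise distinct, and finish with Lemma~\ref{BauranglaisS4} for the multiplicative reformulation. Your treatment is in fact slightly more careful than the paper's in explicitly handling the case where some $S_i=0$.
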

\begin{proof}
Assume that the sequence is separated. Then by Lemma \ref{BauranglaisS1}, 
for $1 \leq i \leq p$, the sequence $l_{i1},l_{i2}, \dots,l_{in_i}$ is separated. \\
\indent 
Conversely, let $x_{1 1}, \dots ,x_{1 n_1},x_{2 1}, \dots ,
x_{2 n_2},\dots,x_{p 1}, \dots ,x_{p n_p}$ in $K$. 
For $1 \leq i \leq p$, set 
$y_i = x_{i1}l_{i1}+\cdots+x_{in_i}l_{in_i}$. 
Since 
$l_{i 1}, \dots ,l_{i n_i}$ is separated, we have: 
$\nu (y_i)= \min \{\nu (x_{ij})+\nu (l_{ij}) \mid 1 \leq j \leq n_i \}$. Therefore, the 
$\nu (y_i)$'s are pairwise non-congruent modulo $\nu K$. In 
particular, they are pairwise distinct, and 
$\nu (y_1+\cdots +y_p)=\min \{\nu (y_i) \mid 1 \leq i \leq p \}$. This proves that 
the sequence $l_{1 1}, \dots ,l_{1 n_1},l_{2 1}, \dots ,
l_{2 n_2},\dots,l_{p 1}, \dots ,l_{p n_p}$ is separated. \\ 
\indent If $\nu$ is multiplicative on $L$, then 
$$l_{i1},l_{i2}, \dots,l_{in_i}\mbox{ is separated 
if, and only if, }1,(l_{i2} l_{i1}^{-1}), \dots ,
(l_{in_i} l_{i1}^{-1}) \mbox{ is separated.}$$ 
By Lemma \ref{BauranglaisS4}, this in turn is equivalent to 
$1,(l_{i2} l_{i1}^{-1})_{\nu}, \dots ,(l_{in_i} l_{i1}^{-1})_{\nu}$ are linearly independent  
over $K_{\nu}$.\end{proof} 
\begin{lemma}\label{ajoutelt} Assume that 
$(L|K,\nu)$ is vs defectless. Let $M$ be a finite $K$-submodule 
of $L$ and $l\in L\backslash M$. Then every separated basis of $M$ extends to a separated 
basis of the $K$-submodule generated by $M$ and $l$.
\end{lemma}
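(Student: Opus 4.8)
The plan is to prove Lemma~\ref{ajoutelt} by a direct construction: starting from a separate basis $\mathcal{B}=\{b_1,\dots,b_n\}$ of $M$, we want to adjoin one element coming from $l$ so that the enlarged set remains separate and still spans $M+Kl$. Since $l\notin M$, the $K$-module $N$ generated by $M$ and $l$ has dimension $n+1$, so we need exactly one new basis vector. The natural candidate is not $l$ itself but a suitable ``correction'' $l'=l-x$ with $x\in M$: by Theorem~\ref{caracsep} (Delon's characterization), since $(L|K,\nu)$ is separate and $M$ is finitely generated with $l\in N\setminus M$, the set $\nu(l-M)$ has a maximal element, attained at some $x_0\in M$; set $l'=l-x_0$, so that $\nu(l')=\max\nu(l-M)\geq \nu(l-x)$ for all $x\in M$, i.e.\ $l'$ is ``optimally far'' from $M$.

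The key step is then to show that $(b_1,\dots,b_n,l')$ is separate over $K$. Take $x_1,\dots,x_n,x_{n+1}$ in $K$ and consider $\nu(x_1b_1+\dots+x_nb_n+x_{n+1}l')$. If $x_{n+1}=0$ this is just the separateness of $\mathcal{B}$. If $x_{n+1}\neq 0$, write the sum as $x_{n+1}\bigl(l'-(-x_{n+1}^{-1}\sum_i x_ib_i)\bigr)$; the element $-x_{n+1}^{-1}\sum_i x_ib_i$ lies in $M$, so $\nu(l'-(\cdots))\leq \nu(l')$ by maximality of $\nu(l')$ in $\nu(l'-M)=\nu(l-M)+$ (note $\nu(l'-M)=\nu(l-M)$ since $x_0\in M$). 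Hence $\nu(l'-(\cdots))\le \nu(l')$, which gives $\nu(l'-(\cdots))=\min(\nu(l'),\nu((\cdots)))$ when the two valuations differ; and when $\nu((\cdots))=\nu(l')$ we still get $\nu(l'-(\cdots))\le\nu(l')=\min$, while separateness of $\mathcal{B}$ controls $\nu((\cdots))=\nu(\sum x_ib_i)=\min_i\nu(x_ib_i)$ (after dividing by $x_{n+1}$). Combining, $\nu(x_1b_1+\dots+x_nb_n+x_{n+1}l')=\min(\nu(x_1b_1),\dots,\nu(x_nb_n),\nu(x_{n+1}l'))$ in all cases; in particular the new family is separate, hence (by Remark~\ref{seplintind}) linearly independent, and since it has $n+1$ elements in an $(n+1)$-dimensional module it is a basis of $N$.

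The main obstacle I expect is the bookkeeping in the ``equal valuations'' case of the above computation: when $\nu\bigl(-x_{n+1}^{-1}\sum_i x_ib_i\bigr)$ happens to equal $\nu(l')$, one only gets $\nu$ of the difference $\ge \nu(l')$ a priori, and one must argue that it cannot exceed the claimed minimum, i.e.\ one must rule out cancellation inflating the valuation. This is handled precisely because $x_0$ was chosen to make $\nu(l')$ \emph{maximal} over $M$: if $\nu(l'-z)>\nu(l')$ for some $z\in M$ then $\nu(l-(x_0+z))>\nu(l-x_0)=\max\nu(l-M)$, a contradiction. So the maximality supplied by Theorem~\ref{caracsep} is exactly the ingredient that closes the gap, and the rest is routine verification that the min-formula holds and that $n+1$ separate elements in $N$ form a basis.
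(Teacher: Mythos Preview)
Your computation is correct, but the argument is circular in the paper's logical order: the implication of Theorem~\ref{caracsep} you invoke (separate $\Rightarrow$ $\nu(l-M)$ has a maximum) is proved in the paper \emph{using} Lemma~\ref{ajoutelt}. So as written you are assuming the lemma in order to prove it. In fact, the verification you carry out---choosing $l'=l-x_0$ with $\nu(l')=\max\nu(l-M)$ and checking that $(b_1,\dots,b_n,l')$ is separate---is precisely the argument the paper gives for the \emph{converse} direction of Theorem~\ref{caracsep}.

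The paper's own proof of Lemma~\ref{ajoutelt} avoids this by never assuming the maximum exists. It takes the given separate basis $\mathcal{B}$ of $M$ together with \emph{some} separate basis $\mathcal{B}'$ of $N=M\oplus Kl$ (which exists by the definition of separateness), partitions both into classes under the relation $y_1\sim y_2\Leftrightarrow\nu(y_1)\equiv\nu(y_2)\pmod{\nu K}$, and compares. If $\mathcal{B}'$ has a class not represented in $\mathcal{B}$, any element of that class can be adjoined; otherwise some class $\{l_1',\dots,l_{k+1}'\}$ of $\mathcal{B}'$ is strictly larger than the corresponding class $\{l_1,\dots,l_k\}$ of $\mathcal{B}$, and a Gaussian-elimination argument (via Lemma~\ref{BauranglaisS4}) produces some $l_i'$ which extends $\mathcal{B}$ separately.

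Your approach can be salvaged: you only need to prove directly, without Theorem~\ref{caracsep}, that $\nu(l-M)$ has a maximum whenever $N$ admits a separate basis. This is not hard---fix a separate basis $(c_1,\dots,c_{n+1})$ of $N$ and a nonzero linear form $\phi\colon N\to K$ with $M=\ker\phi$; then one checks that $\max\nu(l-M)=\nu(\phi(l))+\max_j\bigl(\nu(c_j)-\nu(\phi(c_j))\bigr)$, the maximum taken over $j$ with $\phi(c_j)\neq 0$, and it is attained at $m=l-\bigl(\phi(l)/\phi(c_j)\bigr)c_j$. With that lemma in hand your construction becomes a legitimate (and arguably cleaner) alternative proof.
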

\begin{proof} The proof is left to the reader (see for example \cite{L18}). \end{proof}
\begin{proposition}\label{prop211} Let $d\in \NN\cup \{\infty\}$ such that 
$0< d$ and if $\rchi$ is algebraic over $K$, then $d< [K[\rchi]\!:\! K]$. Then $(K_d[\rchi]|K,\nu)$ is 
vs defectless if, and only if, the $K$-module $K_d[\rchi]$ admits a separated basis. Furthermore, we can 
assume that every polynomial of this basis is monic and that the mapping $f\mapsto {\rm deg}(f)$, 
from this basis onto $\NN$, is one-to-one. \\
Assume that $\rchi$ is transcendental over $K$, and that $\nu$ is multiplicative on $K(\rchi)$.
Then $(K(\rchi)|K,\nu)$ is 
vs defectless if, and only if, the $K$-module $K[\rchi]$ admits a separated basis. 
($\Rightarrow$ holds even if $\nu$ is not multiplicative). 
\end{proposition}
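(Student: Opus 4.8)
The plan is to treat the two assertions in turn, splitting each into its two implications, and to work throughout with the module-level lemmas of this subsection (Lemmas \ref{BauranglaisS1}, \ref{D88lemme5}, \ref{ajoutelt}) rather than the field statement Theorem \ref{finbasesepsep}, since the $K$-modules $K_d[\rchi]$ are in general neither fields nor finite-dimensional. The ``if'' direction of the first equivalence is immediate and needs nothing beyond $\nu$ being a $K$-module valuation: if $K_d[\rchi]$ admits a separate basis, then by Lemma \ref{D88lemme5} so does every finitely generated $K$-submodule of $K_d[\rchi]$, which is precisely the definition of $(K_d[\rchi]|K,\nu)$ being separate.

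For the ``only if'' direction together with the ``furthermore'' clause, I would construct monic polynomials $\Phi_0=1,\Phi_1,\dots$ with $\deg\Phi_n=n$, by induction on $n$ for $0\le n\le d$, so that $(\Phi_0,\dots,\Phi_n)$ is a separate basis of $K_n[\rchi]$. The base case is trivial, $(1)$ being a separate basis of $K_0[\rchi]=K$. For the inductive step, $n\le d$ gives $\rchi^n\in K_d[\rchi]$ and $n\le[K[\rchi]\!:\!K]-1$ gives $\rchi^n\notin K_{n-1}[\rchi]$; applying Lemma \ref{ajoutelt} with the ambient separate module $K_d[\rchi]$, the submodule $M=K_{n-1}[\rchi]$ and $l=\rchi^n$, the separate basis $(\Phi_0,\dots,\Phi_{n-1})$ of $K_{n-1}[\rchi]$ extends to a separate basis $(\Phi_0,\dots,\Phi_{n-1},\Psi)$ of the $K$-submodule generated by $K_{n-1}[\rchi]$ and $\rchi^n$, which is exactly $K_n[\rchi]$. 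Since $\Psi\in K_n[\rchi]\setminus K_{n-1}[\rchi]$, its degree is exactly $n$; writing $\Psi=c\rchi^n+(\text{lower-degree terms})$ with $c\ne0$ and setting $\Phi_n:=c^{-1}\Psi$, Lemma \ref{BauranglaisS1} guarantees that $(\Phi_0,\dots,\Phi_n)$ is still separate, it is still a basis of $K_n[\rchi]$, and $\Phi_n$ is monic of degree $n$. If $d<\infty$ we stop at $n=d$; if $d=\infty$ we take $\mathcal{B}=\{\Phi_n\mid n\in\NN\}$, a basis of $K[\rchi]=\bigcup_nK_n[\rchi]$ that is separate because any finite subfamily lies in some $(\Phi_0,\dots,\Phi_N)$, already shown to be separate.

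It remains to prove the second assertion, about the field $K(\rchi)$. The implication ``$(K(\rchi)|K,\nu)$ separate $\Rightarrow$ $K[\rchi]$ admits a separate basis'' uses no multiplicativity: every finitely generated $K$-submodule of $K[\rchi]$ is one of $K(\rchi)$, so $(K[\rchi]|K,\nu)$ is separate, and the case $d=\infty$ of the first assertion finishes it. For the converse, assume $\nu$ is multiplicative on $K(\rchi)$ and that $K[\rchi]$ has a separate basis. Given a finitely generated $K$-submodule $M$ of $K(\rchi)$, pick a common denominator $g\in K[\rchi]\setminus\{0\}$ of a finite generating set, so that $gM$ is a finitely generated $K$-submodule of $K[\rchi]$; by Lemma \ref{D88lemme5} it has a separate basis $(b_1,\dots,b_m)$, and then $(g^{-1}b_1,\dots,g^{-1}b_m)$ is a basis of $M$ which is separate by the multiplicative clause of Lemma \ref{BauranglaisS1} (multiplication of every term of a separate sequence by the fixed element $g^{-1}$). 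Hence every finitely generated $K$-submodule of $K(\rchi)$ has a separate basis, i.e.\ $(K(\rchi)|K,\nu)$ is separate.

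The argument is mostly bookkeeping once the lemmas are available; the points requiring care are keeping $K_d[\rchi]$ fixed as the ambient separate module so that the hypotheses of Lemma \ref{ajoutelt} hold at every step of the induction, and — in the transcendental case — the passage to a common denominator, which is exactly where multiplicativity of $\nu$ is used. The normalization to monic polynomials is harmless, thanks to the scaling clause of Lemma \ref{BauranglaisS1}.
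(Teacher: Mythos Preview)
Your proof is correct and follows essentially the same approach as the paper: the inductive construction of the degree-graded monic separate basis via Lemma~\ref{ajoutelt} and normalization by Lemma~\ref{BauranglaisS1}, the converse via Lemma~\ref{D88lemme5}, and the common-denominator trick in the transcendental case are exactly the paper's argument. If anything, you are slightly more careful than the paper in making explicit the ambient module in the application of Lemma~\ref{ajoutelt}, the case $d=\infty$, and the forward implication in the transcendental case.
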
 
\begin{proof} Assume that  $(K_d[\rchi]|K,\nu)$ is vs defectless. By Lemma \ref{ajoutelt}, 
the separated basis $1$ of $K$ extends to a separated basis of the module generated by $1$ and $\rchi$. 
Necessarily, the second element of this basis has degree $1$. Let $n\geq 1$ and assume 
that the $K$-module $K_n[\rchi]$ of polynomials of degree at most $n$ has a separated basis of $(n+1)$ 
elements of respective degrees $0$, $1$, $\dots$, $n$. By Lemma \ref{ajoutelt}, this separated basis can be 
completed in a separated basis of $K_{n+1}[\rchi]$, and the  degree of the new element is $n+1$. So we 
get the required separated basis by induction. By Lemma \ref{BauranglaisS1} we can assume that 
every polynomial of this basis is monic. \\ 
\indent Conversely, assume that $K_d[\rchi]$ contains a separated basis, and let $M$ be 
a finitely generated $K$-submodule. 
By Lemma \ref{D88lm5}, $M$ has a separated basis. Hence $(K(\rchi)|K,\nu)$ is vs defectless. \\
\indent We turn to the case where $\rchi$ is transcendental over $K$. It follows from the definition 
(Definition \ref{def14} 2)) that if $(K(\rchi)|K,\nu)$ is vs defectless, then so is $(K[\rchi]|K,\nu)$. 
Now we assume that  $(K[\rchi]|K,\nu)$ is vs defectless 
and that $\nu$ is multiplicative on $K(\rchi)$. Let $M$ 
be a finitely 
generated submodule of $K(\rchi)$. Then there is a polynomial $f\neq 0$ such that 
$f \!\cdot\!M\subseteq K[\rchi]$. 
We take a separated basis of $f \!\cdot\!M$, 
and we divide all its elements by 
$f$ so that, since $\nu$ is multiplicative, by Lemma \ref{BauranglaisS1}, 
we get a separated basis of $M$. 
\end{proof}
\indent We state a refinement of Theorem \ref{caracsep}, which characterizes vs defectless 
extensions by means of initial segments. This proposition completes Proposition \ref{prop111}. 
\begin{proposition}\label{prop214} Let $d\in \NN\cup \{\infty\}$ such that 
$0< d$ and if $\rchi$ is algebraic over $K$, then $d< [K[\rchi]\!:\! K]$. 
The extension $(K_d[\rchi]|K,\nu)$ is 
vs defectless if, and only if, for every integer $n$, $1\leq n \leq d$, $\nu(\rchi^n-K_{n-1}[\rchi])$ has a maximal 
element. \\
Assume that $\rchi$ is transcendental over $K$, and that $\nu$ is multiplicative on $K(\rchi)$. 
Then $(K(\rchi)|K,\nu)$ is 
vs defectless if, and only if, for every $n\in \NN\backslash\{0\}$, $\nu(\rchi^n-K_{n-1}[\rchi])$ has a maximal element. 
\end{proposition}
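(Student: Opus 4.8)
The plan is to reduce the statement, via Proposition~\ref{prop211}, to the existence of a suitable basis, and then to obtain the two implications separately: one directly from Theorem~\ref{caracsep}, and one from an explicit recursive construction of a separate basis whose verification reuses the computation already made in the proof of Theorem~\ref{caracsep}.

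For the direction ``$(K_d[\rchi]|K,\nu)$ separate $\Rightarrow$ each $\nu(\rchi^n-K_{n-1}[\rchi])$ has a maximal element'', I would fix $n$ with $1\le n\le d$ and apply Theorem~\ref{caracsep} with $N:=K_d[\rchi]$, the finitely generated $K$-submodule $M:=K_{n-1}[\rchi]$, and the element $l:=\rchi^n$. Here $\rchi^n\in K_d[\rchi]\setminus K_{n-1}[\rchi]$, since $n\le d\le[K[\rchi]\!:\!K]-1$ forces $\rchi^n\notin K_{n-1}[\rchi]$; moreover $\nu(l-M)=\nu(\rchi^n-K_{n-1}[\rchi])$ because the excluded value $l$ does not lie in $M$. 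Theorem~\ref{caracsep} then says exactly that this set has a maximal element.

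For the converse I would build a separate basis of $K_d[\rchi]$ by recursion on the degree and then invoke Proposition~\ref{prop211}. Set $b_0:=1$, and suppose $b_0,\dots,b_{n-1}$ is a separate basis of $K_{n-1}[\rchi]$ with $b_i$ monic of degree $i$, for some $n$ with $1\le n\le d$. By hypothesis there is $y\in K_{n-1}[\rchi]$ with $\nu(\rchi^n-y)=\max\nu(\rchi^n-K_{n-1}[\rchi])$; put $b_n:=\rchi^n-y$, monic of degree $n$. The key point is that $b_0,\dots,b_n$ is again separate. Given $x_0,\dots,x_n\in K$, the case $x_n=0$ follows from the induction hypothesis, so assume $x_n\ne 0$ and set $\gamma:=\min_{0\le i<n}\nu(x_ib_i)$, so that $\nu\!\bigl(\sum_{i<n}x_ib_i\bigr)=\gamma$ by induction. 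Writing $\sum_{i\le n}x_ib_i=x_n\bigl(\rchi^n-y'\bigr)$ with $y':=y-\sum_{i<n}(x_i/x_n)b_i\in K_{n-1}[\rchi]$, the maximality of $\nu(\rchi^n-y)$ gives $\nu\!\bigl(\sum_{i\le n}x_ib_i\bigr)=\nu(x_n)+\nu(\rchi^n-y')\le\nu(x_nb_n)$. Comparing this with $\gamma$ in the three cases $\gamma<\nu(x_nb_n)$, $\gamma=\nu(x_nb_n)$ (this one combining the last inequality with the $K$-module valuation inequality $\nu\ge\min$), $\gamma>\nu(x_nb_n)$ yields $\nu\!\bigl(\sum_{i\le n}x_ib_i\bigr)=\min_{0\le i\le n}\nu(x_ib_i)$; this is precisely the computation carried out in the converse half of the proof of Theorem~\ref{caracsep}. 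For $d$ finite the recursion stops at $b_d$ and produces a separate basis of $K_d[\rchi]$ (it is a basis since its $d+1$ elements are monic of degrees $0,1,\dots,d$); for $d=\infty$ the sequence $(b_n)_{n\ge 0}$ is a basis of $K[\rchi]$ all of whose finite subfamilies are separate, hence a separate basis. In either case Proposition~\ref{prop211} gives that $(K_d[\rchi]|K,\nu)$ is separate.

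Finally, for the transcendental statement: if $(K(\rchi)|K,\nu)$ is separate, then each $K_d[\rchi]$ is a finitely generated $K$-submodule, hence admits a separate basis, hence $(K_d[\rchi]|K,\nu)$ is separate by Proposition~\ref{prop211}, and the first part gives the maximality of $\nu(\rchi^n-K_{n-1}[\rchi])$ for every $n\in\NN^*$; conversely, the recursion above produces a separate basis of $K[\rchi]$, and since $\nu$ is multiplicative, Proposition~\ref{prop211} converts this into separateness of $(K(\rchi)|K,\nu)$. The only point requiring care in the whole argument is the verification that $b_0,\dots,b_n$ stays separate at each recursion step, i.e.\ that the maximality of $\nu(\rchi^n-K_{n-1}[\rchi])$ is exactly what forces $\nu\!\bigl(\sum_{i\le n}x_ib_i\bigr)\le\nu(x_nb_n)$ in the borderline case $\gamma=\nu(x_nb_n)$; but this mechanism is already available from Theorem~\ref{caracsep}, so no genuinely new difficulty appears.
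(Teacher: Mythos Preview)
Your proof is correct and follows essentially the same approach as the paper: the forward implication via Theorem~\ref{caracsep}, the converse via the recursive construction of a separate basis $b_0,\dots,b_n$ with $b_n$ monic of degree $n$ realizing $\max\nu(\rchi^n-K_{n-1}[\rchi])$, and the appeal to Proposition~\ref{prop211} to conclude (including the transcendental case). The paper organizes the inductive verification through the dichotomy $\nu(f/x_n-f_n)\lessgtr\nu(f_n)$ rather than your three cases on $\gamma$ versus $\nu(x_nb_n)$, but this is only a cosmetic difference.
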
 
\begin{proof} In both equivalences, $\Rightarrow$ follows from Theorem \ref{caracsep}. In order to 
prove the converse, we construct by induction a separated basis such that the mapping $f\mapsto {\rm deg}(f)$ is 
one-to-one. Then, by Proposition \ref{prop211}, $(K_d[\rchi]|K,\nu)$ is vs defectless. The case 
where $\rchi$ is transcendental also follows from Proposition \ref{prop211}. 
Trivially, $1$ is a separated basis of $K_0[\rchi]=K$. Assume that we have a separated 
basis $(f_0,\dots,f_{n-1})$ of $K_{n-1}[\rchi]$. Let $f_n$ be a monic polynomial such that 
$\nu(f_n)=\max (\nu(\rchi^n-K_{n-1}[\rchi]))$. Since the degree of $f_n$ is $n$, 
$(f_0,\dots,f_{n-1},f_n)$ is a basis of $K_n[\rchi]$. Let $f:=x_nf_n+\cdots+x_0f_0$ in $K_n[\rchi]$. 
If $x_n=0$, then by induction hypothesis $\displaystyle{\nu(f)=\min_{0\leq i\leq n-1} 
\nu(x_if_i)=\min_{0\leq i\leq n} \nu(x_if_i)}$. Now we assume: $x_n\neq 0$. Since 
$\nu(f_n)$ is maximal, we have $\displaystyle{\nu\left(\frac{f}{x_n}\right)\leq \nu(f_n)}$. 
If $\nu\left(\frac{f}{x_n}-f_n\right)<\nu(f_n)$, 
then $\displaystyle{\nu\left(\frac{f}{x_n}-f_n\right)= \nu\left(\frac{f}{x_n}\right) }$. So: 
$$\nu(f)=\nu(x_n)+\nu\left(\frac{f}{x_n}\right)=\nu(x_n)+\nu\left(\frac{f}{x_n}-f_n\right)=
\nu(x_n)+\nu\left(\frac{x_{n-1}f_{n-1}}{x_n}+\cdots +\frac{x_0f_0}{x_n}\right)=$$
$$=\nu(x_n)+\min_{0\leq i\leq n-1} \nu\left(\frac{x_if_i}{x_n}\right)=
\min_{0\leq i\leq n-1} \nu(x_if_i)=\min_{0\leq i\leq n} \nu(x_if_i).$$
If $\displaystyle{\nu\left(\frac{f}{x_n}-f_n\right)\geq \nu(f_n)}$, 
then $\displaystyle{\min_{0\leq i\leq n-1} \nu(x_if_i)
\geq \nu(x_nf_n)\geq\nu(f)\geq \min_{0\leq i\leq n} \nu(x_if_i)}$. \\ 
Therefore: $\displaystyle{\nu(f)=\min_{0\leq i\leq n} \nu(x_if_i)}$. 
\end{proof}
\subsection{Graded algebra associated to a valuation}\label{subsection23}
In the proofs of Proposition \ref{rk322} and Theorem \ref{propdiv} we will introduce the graded algebra 
associated to a valuation.
Now, we review some observations. \\
\indent 
Let $(K,\nu)$ be a valued field. Recall that, for every $\gamma\in \nu K$, $K_{\gamma,\nu}$ denotes the 
$K_{\nu}$-module $\{x\in K\mid \nu(x)\geq \gamma\}/\{x\in K\mid \nu(x)>\gamma\}$. Now, let 
$G_{\nu}(K)$ be the graded algebra  
$\displaystyle{G_{\nu}(K)=\bigoplus_{\gamma\in \nu K} K_{\gamma,\nu}}$. \\
\indent 
In the case where $K$ is the valued field $k((\Gamma))$ of generalized formal power series with 
coefficients in a field $k$ and exponents in a linearly ordered abelian group $\Gamma$: 
$$k((\Gamma))=
\left\{ \sum_{\gamma\in \Lambda}x_{\gamma}X^{\gamma}\mid \Lambda \mbox{ is a well-ordered subset of } 
\Gamma, \mbox{ and } \forall \gamma\in \Lambda\; x_{\gamma}\in k\right\},$$ 
then $G_{\nu}(K)$ is isomorphic to the ring of generalized polynomials $k[\Gamma]$, the subring of 
all elements of $k((\Gamma))$ with finite support. 
Note that if $k_1|k_0$ is an extension of fields and $\Gamma_1|\Gamma_0$ is an extension of groups, then 
$k_1[\Gamma_1]$ is a $k_0[\Gamma_0]$-module. Its dimension is finite if, and only if, both of 
$[k_1\!:\!k_0]$ 
and $(\Gamma_1:\Gamma_0)$ are finite. If this holds, then this dimension is $[k_1\!:\!k_0](\Gamma_1:\Gamma_0)$. \\
\indent More generally, let $\gamma\in \nu K$ and pick an element $x_0$ of valuation $\gamma$. Then 
the mapping $x_{\nu(x),\nu}\mapsto (xx_0^{-1})_{\nu}$ induces a $K_{\nu}$-module isomorphism 
between $\{x\in K\mid \nu(x)\geq \gamma\}/\{x\in K\mid \nu(x)>\gamma\}$ and $K_{\nu}$. Hence, 
the $K_{\nu}$-module $G_{\nu}(K)$ is isomorphic to the $K_{\nu}$-module $K_{\nu}[\nu K]$ of 
polynomials with coefficients in $K_{\nu}$ and exponents in $\nu K$.  
If $K$ contains a lifting of $\nu K$, then we can assume that 
these graded algebras are isomorphic. In particular, if $\nu K=\ZZ$, then they are isomorphic. 
If $(K',\nu')$ is an $\aleph_1$-saturated elementary extension of $(K,\nu)$, then it contains a 
lifting of its value group (see \cite{K75}). Hence $G_{\nu'}(K')$ is isomorphic to the ring of 
polynomials $K_{\nu'}'[\nu' K']$. Therefore every graded algebra $G_{\nu}(K)$ embeds in a ring of 
polynomials. 
If $(K,\nu)$ contains a lifting $K_0$ of its residue field and a lifting $\Gamma$ of $\nu K$, then 
it contains the algebra $K_0[\Gamma]$, which is isomorphic to $G_{\nu}(K)$. Now, if $(K,\nu)$ is 
henselian and char$(K_{\nu})=0$, then we know that it admits a lifting of $K_{\nu}$. It 
follows that every valued field $(K,\nu)$ of residue characteristic $0$ admits an 
extension $(K',\nu')$ which contains a subalgebra which is isomorphic to $G_{\nu}(K)$. Furthermore, 
$(K',\nu')$ embeds in the power series field $K_0'((\nu' K'))$ equipped with the canonical valuation. \\ 
\indent 
For every $x\in K$, let $in_{\nu}(x):=x_{\nu(x),\nu}$ be the image of $x$ in $K_{\nu(x),\nu}$, 
which is also its image in $G_{\nu}(K)$. In the case of a subfield of a power series field, we have 
$\displaystyle{in_{\nu}\left(\sum_{\gamma\in \Lambda}x_{\gamma}X^{\gamma} \right)=
x_{\gamma_0}X^{\gamma_0}}$, 
where $\gamma_0$ is the smallest element of the {\it support} of the series (i.e.\ the well ordered subset 
$\Lambda$ of $\nu K$ such that $x_{\gamma_0}\neq 0$). 
In general, for every $x$, $y$ in $K$, we have $in_{\nu}(x)in_{\nu}(y)=
in_{\nu}(xy)$. Assume that $\nu(x)=\nu(y)$. If $in_{\nu}(x)=-in_{\nu}(y)$, then $\nu(x+y)>\nu(x)$ and 
$\nu(x+y)$ can be arbitrarily large, 
so we have $in_{\nu}(x+y)=0$. 
Otherwise, $in_{\nu}(x+y)=in_{\nu}(x)+in_{\nu}(y)$. \\
\indent An element of $G_{\nu}(K)$ is called {\it homogeneous} if it belongs to 
$\displaystyle{\bigcup_{\gamma\in\nu K} K_{\gamma,\nu} }$. In the case of a polynomial ring, this is 
equivalent to being a monomial. One can see that the invertible elements of $G_{\nu}(K)$ are the 
homogeneous ones. \\
\indent For further purposes, if $M$ is a $K$-submodule of $L$, we denote by 
$G_{\nu}(M)$ the additive group  
$\displaystyle{\bigoplus_{\gamma\in \nu M} M_{\gamma,\nu}}$.  \\[2mm]
\indent By Remark \ref{extimptim}, an extension of valued fields $(L|K,\nu)$ is immediate if, 
and only if, $G_{\nu}(L)=G_{\nu}(K)$. \\
\indent Now we turn to the vs defectless case. 
\begin{proposition}\label{fact220} 
Let $(L|K,\nu)$ be an extension of valued fields, and $l_1,\dots,l_n$ in $L$. 
The family  $(l_1,\dots,l_n)$ is separated over $(K,\nu)$ if, and only if, 
$in_{\nu}(l_1),\dots,in_{\nu}(l_n)$ are li\-ne\-ar\-ly independent over $G_{\nu}(K)$. \\
 If $(l_1,\dots,l_n)$ is a maximal separated family, 
then $(in_{\nu}(l_1),\dots,in_{\nu}(l_n))$ is a basis of $G_{\nu}(L)$. \\
Assume that $L|K$ is finite. Then $(L|K,\nu)$ is vs defectless if, and only if, $G_{\nu}(L)$ is a 
$G_{\nu}(K)$-module of dimension $[L\!:\! K]$.  \end{proposition}
\begin{proof} Recall that
the family  $(l_1,\dots,l_n)$ is separated over $(K,\nu)$ if, and only if, 
for every $x_1,\dots,x_n$ in $K$, $\nu(x_1l_1+\cdots+x_nl_n)=\min (\nu(x_1l_1),\dots,\nu(x_nl_n))$. 
Now, this equivalent to saying that for every $x_1,\dots,x_n$ in $K$ with 
$\nu(x_1l_1)=\cdots =\nu(x_nl_n)$, we have $\nu(x_1l_1+\cdots+x_nl_n)=\nu(x_1l_1)$. This last equality is 
equivalent to $in_{\nu}(x_1)in_{\nu}(l_1)+\cdots+in_{\nu}(x_n)in_{\nu}(l_n)\neq 0$. 
So, if $in_{\nu}(l_1),\dots,in_{\nu}(l_n)$ are linearly independent in the $G_{\nu}(K)$-module $G_{\nu}(L)$, 
then the family $(l_1,\dots,l_n)$ is separated. Now, assume that for every $x_1,\dots,x_n$ in $K$ with 
$\nu(x_1l_1)=\cdots =\nu(x_nl_n)$, we have $\nu(x_1l_1+\cdots +x_nl_n)=\nu(x_1l_1)$. Let $y_1,\dots,y_n$ in 
$G_{\nu}(K)$. Every $y_j$ can be written as a finite sum of homogeneous elements: 
$y_j=in_{\nu}(x_{j,1})+\cdots+in_{\nu}(x_{j,i_j})$. It follows that $y_1 in_{\nu}(l_1)+\cdots+y_n in_{\nu}(l_n)$ 
can be written as a sum of $in_{\nu}(x_{1,k_1})in_{\nu}(l_1)+\cdots +in_{\nu}(x_{n,k_n})in_{\nu}(l_n)$'s, 
where the non-zero $in_{\nu}(x_{j,k_j})in_{\nu}(l_j)$ have the same valuation. Therefore,  
$y_1in_{\nu}(l_1)+\cdots+y_nin_{\nu}(l_n)\neq 0$. 
Consequently, the family  $(l_1,\dots,l_n)$ is separated over $(K,\nu)$ if, and only if, 
$in_{\nu}(l_1),\dots,in_{\nu}(l_n)$ are linearly independent over $G_{\nu}(K)$. 
Furthermore, if $(l_1,\dots,l_n)$ is a maximal separated family, 
then $(in_{\nu}(l_1),\dots,in_{\nu}(l_n))$ is a basis of $G_{\nu}(L)$. 
Now, if $[L\!:\!K]$ is finite, then the dimension of the $G_{\nu}(K)$-module $G_{\nu}(L)$ is 
$[L_{\nu}\! :\! K_{\nu}]\!\cdot \!(\nu L\! :\! \nu K)$. Hence, by Theorem \ref{relatdegsep}, 
$(L|K,\nu)$ is vs defectless if, and only if, $G_{\nu}(L)$ is a $G_{\nu}(K)$-module of dimension 
$[L\!:\! K]$.  \end{proof}
\indent The following lemma shows that if, for $l\in L$, $in_{\nu}(l)$ satisfies a relation of 
algebraic dependence over $G_{\nu}(K)$, then we can define its irreducible polynomial. 
\begin{lemma}\label{irrpol} Let $(L|K,\nu)$ be an extension of valued fields, and $l\in L$. 
Assume that $in_{\nu}(l)$ satisfies a relation of 
algebraic dependence over $G_{\nu}(K)$, and let $n$ be the smallest degree such that 
$in_{\nu}(l)$ satisfies a relation of algebraic dependence of degree $n$. 
Then, $in_{\nu}(l)$ satisfies a relation of the form $in_{\nu}(l)^n+in_{\nu}(x_{n-1})
in_{\nu}(l)^{n-1}+\cdots+in_{\nu}(x_0)=0$, where $x_0,\dots,x_{n-1}$ belong to $K$ and 
$\nu(x_0)=\cdots=\nu(x_{n-1}l^{n-1})=\nu(l^n)$. 
\end{lemma}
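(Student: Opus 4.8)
The plan is to exploit the grading. A general algebraic dependence relation of $in_{\nu}(l)$ over $G_{\nu}(K)$ can be refined to a \emph{homogeneous} one of the same degree, and since a nonzero homogeneous element of $G_{\nu}(K)$ is invertible, such a relation can be made monic; lifting the homogeneous coefficients back to $K$ then produces the asserted relation.

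First I would fix, using the definition of $n$, a relation $\sum_{i=0}^{n} a_i\, in_{\nu}(l)^i = 0$ with $a_0,\dots,a_n\in G_{\nu}(K)$ and $a_n\neq 0$. The element $in_{\nu}(l)$ lies in the single graded piece $L_{\nu(l),\nu}$ of $G_{\nu}(L)$, so $in_{\nu}(l)^i\in L_{i\nu(l),\nu}$. Writing each $a_i=\sum_{\gamma} a_{i,\gamma}$ with $a_{i,\gamma}\in K_{\gamma,\nu}$ (a finite sum, with $a_{i,\gamma}=0$ when $\gamma\notin\nu K$), the term $a_{i,\gamma}\, in_{\nu}(l)^i$ is homogeneous of degree $\gamma+i\nu(l)$. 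Collecting the terms of a fixed total degree $\delta\in\nu L$, the relation yields, for every $\delta$, the homogeneous identity
$$\sum_{i=0}^{n} a_{i,\,\delta-i\nu(l)}\, in_{\nu}(l)^i = 0 .$$

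Next I would choose $\delta$ so that the top term survives. Since $a_n\neq 0$, there is $\gamma_0\in\nu K$ with $a_{n,\gamma_0}\neq 0$; put $\delta=\gamma_0+n\nu(l)$. In the corresponding homogeneous identity the coefficient of $in_{\nu}(l)^n$ is $a_{n,\gamma_0}$, a nonzero homogeneous element of $G_{\nu}(K)$, hence invertible. Dividing by $a_{n,\gamma_0}$ gives
$$in_{\nu}(l)^n + \sum_{i=0}^{n-1} c_i\, in_{\nu}(l)^i = 0, \qquad c_i = a_{n,\gamma_0}^{-1}\,a_{i,\,\delta-i\nu(l)}\in G_{\nu}(K)\ \text{homogeneous.}$$
Every homogeneous element of $G_{\nu}(K)$ is $in_{\nu}(x)$ for some $x\in K$ (lift a nonzero class in $K_{\gamma,\nu}$ by any representative of value $\gamma$, and lift $0$ by $0$), so I write $c_i=in_{\nu}(x_i)$ with $x_i\in K$. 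Using multiplicativity of $in_{\nu}$, $in_{\nu}(x_i)\,in_{\nu}(l)^i=in_{\nu}(x_i l^i)$, and this term, when nonzero, lies in the same graded piece as $in_{\nu}(l)^n=in_{\nu}(l^n)$, namely degree $n\nu(l)$; hence $\nu(x_i l^i)=\nu(l^n)$ for every $i$ with $x_i\neq 0$, in particular $\nu(x_0)=\nu(l^n)$. This is exactly the relation claimed.

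The argument is essentially bookkeeping with the grading. The only point to watch is that passing from the given relation to one of its homogeneous components does not lower the degree below $n$; this is why $\delta$ is chosen so as to keep $a_{n,\gamma_0}$ in play (and it is consistent with the minimality of $n$, since a homogeneous relation is in particular a relation over $G_{\nu}(K)$). I do not expect any genuinely hard step.
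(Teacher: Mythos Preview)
Your argument is correct and is the standard proof: project an arbitrary algebraic relation onto a homogeneous component that retains the top-degree term, use invertibility of nonzero homogeneous elements of $G_{\nu}(K)$ to make it monic, and lift the coefficients. The paper does not give its own proof of this lemma but simply refers the reader to \cite{HOS 07}; your write-up is exactly the kind of argument one would expect to find there, so there is nothing substantive to compare.

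One cosmetic remark: the lemma as stated asserts $\nu(x_0)=\cdots=\nu(x_{n-1}l^{n-1})=\nu(l^n)$ for all indices, whereas your proof (correctly) only guarantees this for those $i$ with $x_i\neq 0$. This is an imprecision in the statement rather than in your argument; the intended meaning is that each nonzero term $x_il^i$ has the common valuation $n\nu(l)$, which is precisely what you establish.
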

\begin{proof} See for example \cite{HOS07}. 
\end{proof}
%s
%
\section{Key polynomials}\label{section3}
\indent In this section, $\rchi$ is algebraic or transcendental over the field $K$ and 
$\nu$ is a $K$-module valuation on $K(\rchi)$.  
\subsection{Definitions}\label{subsection31}
\begin{notation} Let $\Phi$ be a monic polynomial of degree $d\geq 1$. 
For $f$, $g$ in $K_{d-1}[\rchi]$, we will denote by $q_{\Phi}(f,g)$ and $r_{\Phi}(f,g)$ 
respectively (in short $q(f,g)$ and $r(f,g)$) the quotient and the remainder of the 
euclidean division of $fg$ by $\Phi$. In other words, $q(f,g)$ and $r(f,g)$ belong to 
$K_{d-1}[\rchi]$ and $fg=\Phi\!\cdot\!q(f,g)+r(f,g)$. 
\end{notation} 
\begin{definitions}\label{def32} 
Let $\Phi$ be a monic polynomial of degree $d\geq 1$. \\
We say that $\Phi$ is a {\it weak key polynomial for} $\nu$ (or a {\it w key polynomial for} $\nu$) 
 if, for every $f$, $g$ in 
$K_{d-1}[\rchi]$ with $\mbox{deg}(f)+\mbox{deg}(g)<[K[\rchi]\!:\!K]$, we have $\nu(fg)=\nu(r(f,g))$. \\
We say that $\Phi$ is a {\it key polynomial for} $\nu$ if, for every $f$, $g$ in $K_{d-1}[\rchi]$ 
with $\mbox{deg}(f)+\mbox{deg}(g)<[K[\rchi]\!:\!K]$, we have $\nu(fg)=\nu(r(f,g))<\nu(q(f,g)\!\cdot\!\Phi)$. \\
Let $d$ be a positive integer. \\
We say that $d$ is a {\it key degree} of $(K(\rchi)|K,\nu)$ if 
there exists a key polynomial of degree $d$. \\
Assume that $d$ is a key degree. If $\nu(\rchi^d-K_{d-1}[\rchi])$ has no maximal 
element, then we say that $d$ is an {\it immediate} key degree. Otherwise, 
we say that $d$ is an {\it independent} key degree. If this maximum does not belong to 
$\nu K_{d-1}[\rchi]$, then we say that $d$ is a {\it valuational} key degree. 
If this maximum belongs to $\nu K_{d-1}[\rchi]$, then we say that $d$ is a {\it residual} key degree. 
\end{definitions}
\indent Let $d$ be a key degree. Then, by Proposition \ref{prop111}, 
$d$ is an immediate key degree if, and only if, the extension 
$(K_d[\rchi]|K_{d-1}[\rchi],\nu)$ is immediate and by Proposition \ref{prop214}, 
$d$ is a vs defectless key degree if, and only if, the extension 
$(K_d[\rchi]|K_{d-1}[\rchi],\nu)$ is vs defectless. However, we prefer to say that these degrees 
are independent because this word is easier to pronounce than vs defectless. We will see after 
Lemma \ref{avpropdiva} that this makes sense. 
\begin{proposition}\label{rks33} 
1) The integer $1$ is a key degree. Furthermore, 
every monic polynomial of degree $1$ is a key polynomial.\\ 
2) Every w key polynomial is irreducible. \\
3) If $\nu$ is partially multiplicative and $\Phi$ is a w key polynomial (resp.\ a key polynomial) 
for $\nu$ of degree $d$, then, for every pm valuation $\nu'$ 
such that the restriction of $\nu'$ to $K_{d-1}[\rchi]$ is equal to the restriction of $\nu$ 
and $\nu'(\Phi)\geq \nu(\Phi)$, $\Phi$ is a w key polynomial (resp.\ a key polynomial) for $\nu'$. \\
4) Assume that $\nu$ is partially multiplicative. If 
$\Phi$ is a w key polynomial, then $\nu(\Phi)\geq \{\nu(r(f,g))-\nu(q(f,g))\mid 
f\in K_{d-1}[\rchi],\; g\in K_{d-1}[\rchi],\; {\rm deg}(f) +{\rm deg}(g)<[K[\rchi]\!:\!K]\}$. 
If $\Phi$ is a monic polynomial, then $\Phi$ is a key polynomial if, and only if, 
$\nu(\Phi)> \{\nu(r(f,g))-\nu(q(f,g))\mid 
f\in K_{d-1}[\rchi],\; g\in K_{d-1}[\rchi],\; {\rm deg}(f) +{\rm deg}(g)<[K[\rchi]\!:\!K]\}$.
\end{proposition}
\begin{proof} 1) and 3) are trivial. \\
\indent 2) Assume that there exist two polynomials $f$, $g$ in $K_{d-1}[\rchi]$ such that 
$fg=\Phi$, then $r(f,g)=0$, and $\nu(r(f,g))=\infty> \nu(fg)$. 
Hence $\Phi$ is not a w key polynomial for $\nu$. \\
\indent 4) Clearly, if $\Phi$ is a w key polynomial (resp.\ a key polynomial), then 
$\nu(\Phi)\geq \{\nu(r(f,g))-\nu(q(f,g))\mid 
f\in K_{d-1}[\rchi],\; g\in K_{d-1}[\rchi],\; {\rm deg}(f) +{\rm deg}(g)<[K[\rchi]\!:\!K]\}$ 
(resp.\ $\nu(\Phi)> \{\nu(r(f,g))-\nu(q(f,g))\mid 
f\in K_{d-1}[\rchi],\; g\in K_{d-1}[\rchi],\; {\rm deg}(f) +{\rm deg}(g)<[K[\rchi]\!:\!K]\}$).
Now, if $\nu(\Phi)>\nu(r(f,g))-\nu(q(f,g))$, then $\nu(q(f,g)\Phi)>\nu(r(f,g))=\nu(fg-q(f,g)\Phi)$.  
Hence $\nu(r(f,g))=\nu(fg)$. 
\end{proof}
\indent The following lemma explains the distinction that we make between the valuational and the 
residual key degrees.  
\begin{lemma}\label{avpropdiva} Let $d$ be a positive integer and $\Phi$ be a monic polynomial of degree $d$. 
Then $\nu(\Phi)$ is the maximum of $\nu(\rchi^d-K_{d-1}[\rchi])$ if, and only if, 
either $\nu(\Phi)\notin \nu K_{d-1}[\rchi]$, or $\Phi_{\nu(\Phi),\nu}\notin (K_{d-1}[\rchi])_{\nu(\Phi),\nu}$. 
\end{lemma}
\begin{proof} Assume that $\nu(\Phi)$ is the maximum of $\nu(\rchi^d-K_{d-1}[\rchi])$ and 
$\nu(\Phi)\in \nu K_{d-1}[\rchi]$. Then, 
for every $f\in K_{d-1}[\rchi]$ such that 
$\nu(f)=\nu(\Phi)$ we have $\nu(\Phi-f)=\nu(\Phi)=\nu(f)$. Hence $(\Phi)_{\nu(\Phi),\nu}\neq 
f_{\nu(\Phi),\nu}$. It follows that 
$\Phi_{\nu(\Phi),\nu}\notin (K_{d-1}[\rchi])_{\nu(\Phi),\nu}$. Conversely, assume that 
$\nu(\Phi)\notin \nu K_{d-1}[\rchi]$, or $\Phi_{\nu(\Phi),\nu}\notin (K_{d-1}[\rchi])_{\nu(\Phi),\nu}$. Then 
for every $g\in K_{d-1}[\rchi]$ we have $\nu(\Phi-g)=\min (\nu(\Phi),\nu(g))\leq\nu(\Phi)$. 
Hence  $\nu(\Phi)$ is the maximum of $\nu(\rchi^d-K_{d-1}[\rchi])$. 
\end{proof}
\indent It follows that if $(K_d[\rchi]|K_{d-1}[\rchi],\nu)$ is vs defectless, 
then any monic polynomial $\Phi$, such that $\nu(\Phi)$ is the maximum of $\nu(\rchi^d-K_{d-1}[\rchi])$, 
is valuation independent over $K_{d-1}[\rchi]$ (i.e.\ for every $g\in K_{d-1}[\rchi]$ we have 
$\nu(\Phi+g)=\min (\nu(\Phi),\nu(g))$. So the word independent key degree makes sense. \\[2mm]
\indent 
Let $\Phi$ be a monic irreducible polynomial of $K[X]$ of degree $d\geq 1$ ($K[X]$, the ring of formal 
polynomials with coefficients in $K$). Then 
$K[X]/(\Phi)$ is a field, such that the canonical epimorphism $\rho:\; K[X]\rightarrow 
K[X]/(\Phi)$ is an isomorphism from the $K$-module $K_{d-1}[X]$ onto the $K$-module 
$K[X]/(\Phi)$. Now, for $f,\;g$ in $K_{d-1}[X]$, 
we set $f\ast g:=r(f,g)$. Then $\rho(f\ast g)=\rho(r(f,g))=\rho(fg)$. 
Hence $(K_{d-1}[X],+,\ast)$ is 
a field which is isomorphic to $K[X]/(\Phi)$. The same operation can be defined in 
$K[\rchi]$ whenever $d\leq [K(\rchi)\!:\! K]/2$. 
The field $(K_{d-1}[\rchi],+,\ast)$ will be denoted by $K_{\Phi}$. 
Note that if $\nu$ is a pm valuation on the field $K(\rchi)$, 
then its restriction to 
$K_{d-1}[\rchi]$ induces a valuation of the $K$-modules $K_{\Phi}$ and $K[X]/(\Phi)$. 
If $\mathcal{Y}$ is a root of $\Phi(X)$ in some algebraic extension, then the fields 
$K_{\Phi}$ and $K[\mathcal{Y}]$ are isomorphic. 
\begin{proposition}\label{lm38} 
Let $\Phi$ be an irreducible monic polynomial of degree $d$, $1\leq d \leq [K(\rchi)\!:\! K]/2$, 
and  assume that $\nu$ is a pm valuation on $K(\rchi)$. Then 
$\Phi$ is a w key polynomial for $\nu$ if, and only if, the valued $K$-module $(K_{\Phi},\nu)$ 
is a valued field. 
\end{proposition}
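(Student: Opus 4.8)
The plan is to unwind the definitions and observe that, under the hypothesis $1\le d\le [K(\rchi)\!:\! K]/2$, being a key polynomial for $\nu$ says exactly that $\nu$ is multiplicative for the operation $\ast$ on $K_{d-1}[\rchi]$, and that this multiplicativity is precisely what is missing for $(K_{\Phi},\nu)$ to be a valued field. So the proof should be essentially a chain of two equivalences.

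First I would record the elementary degree bound: for $f,g\in K_{d-1}[\rchi]$ one has $\deg(f)+\deg(g)\le 2(d-1)<2d\le [K(\rchi)\!:\! K]\le [K[\rchi]\!:\! K]$, where the last inequality is an equality when $\rchi$ is algebraic and is trivially true (with $[K[\rchi]\!:\! K]=\infty$) when $\rchi$ is transcendental. Hence the constraint $\deg(f)+\deg(g)<[K[\rchi]\!:\! K]$ appearing in Definition \ref{def32} is automatically satisfied for all such $f,g$, and, since $\nu$ is a p-m valuation, $\nu(fg)=\nu(f)+\nu(g)$ holds for every $f,g\in K_{d-1}[\rchi]$. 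Consequently $\Phi$ is a key polynomial for $\nu$ if and only if $\nu(f)+\nu(g)=\nu(r(f,g))$ for all $f,g\in K_{d-1}[\rchi]$.

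Next I would translate this into the field $K_{\Phi}=(K_{d-1}[\rchi],+,\ast)$. By definition of $\ast$ we have $f\ast g=r(f,g)$, so the condition from the previous paragraph reads $\nu(f\ast g)=\nu(f)+\nu(g)$ for all $f,g$, i.e.\ $\nu$ restricts to a group homomorphism from $(K_{\Phi}^{*},\ast)$ to the value group. Recall (as noted after Notation \ref{notat35}) that the restriction of $\nu$ to $K_{d-1}[\rchi]$ is already a $K$-module valuation on $K_{\Phi}$, so $\nu(y)=\infty\Leftrightarrow y=0$ and $\nu(y_1+y_2)\ge\min(\nu(y_1),\nu(y_2))$ hold automatically; and since $\Phi$ is irreducible, $K_{\Phi}$ is a field, so once multiplicativity is available the remaining valued-field requirement $\nu(f^{-1})=-\nu(f)$ follows formally from $\nu(f\ast f^{-1})=\nu(1)=0$. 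Therefore $(K_{\Phi},\nu)$ is a valued field if and only if $\nu(f\ast g)=\nu(f)+\nu(g)$ for all $f,g\in K_{d-1}[\rchi]$, which is exactly the condition characterizing key polynomials obtained above; chaining the two equivalences proves the proposition. The only points needing care — and where a careless argument could slip — are the verification that the degree restriction in the definition of a key polynomial is vacuous in the range $d\le [K(\rchi)\!:\! K]/2$ (so that the p-m hypothesis applies to every pair $f,g\in K_{d-1}[\rchi]$) and the remark that, here, being a valued field adds nothing beyond multiplicativity because the ultrametric inequality and the fact that only $0$ has valuation $\infty$ are inherited from the ambient valuation. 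Both are handled by the two observations above, so no genuine obstacle arises.
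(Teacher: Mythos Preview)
Your proof is correct and follows essentially the same approach as the paper's: both reduce the equivalence to the identity $\nu(r(f,g))=\nu(fg)=\nu(f)+\nu(g)$ via the definition of $\ast$ and the p-m hypothesis. You have simply spelled out more carefully the degree bound showing the key-polynomial condition ranges over all of $K_{d-1}[\rchi]\times K_{d-1}[\rchi]$, and the fact that the remaining valued-field axioms are inherited from the ambient $K$-module valuation.
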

\begin{proof} 
Let $f,\;g$ in $K_{d-1}[\rchi]$. Then $\nu(f\ast g)=\nu(r(f,g))$ and 
$\nu(fg)=\nu(f)+\nu(g)$. Hence $(K_{\Phi},\nu)$ is a valued field 
if, and only if, for every $f,\;g$ in $K_{d-1}[\rchi]$ we have: 
$\nu(r(f,g))=\nu(fg)$. This in turn is equivalent to saying that $\Phi$ is a w key polynomial. \end{proof}
\begin{corollary}\label{cor39} 
Assume that $\nu$ is a pm valuation on $K(\rchi)$ and let $\Phi$ be a w key polynomial of degree $d$, 
$1\leq d \leq [K(\rchi)\!:\! K]/2$. Then $\nu K_{d-1}[\rchi]$ is a subgroup of $\nu K(\rchi)$. 
\end{corollary}
\begin{proof} Indeed, if $\Phi$ is a w key polynomial, then by Proposition \ref{lm38} 
$\nu K_{d-1}[\rchi]=\nu K_{\Phi}$ is a subgroup of $\nu K(\rchi)$.
\end{proof}
\begin{remark}\label{rkap36} 
Let $\Phi$ be a monic polynomial of degree $d$, $1\leq d \leq [K(\rchi)\!:\! K]/2$, 
and assume that $\nu$ is a pm valuation on $K(\rchi)$. \\
\indent 1) The polynomial $\Phi$ is a key polynomial if, and only if, for every $f$, $g$ 
in $K_{d-1}[\rchi]$, $\nu(fg)=\nu(r_{\Phi}(f,g))$ and $(fg)_{\nu(fg),\nu}=(r_{\Phi}(f,g))_{\nu(fg),\nu}$. 
Indeed, if $\nu(fg)=\nu(r_{\Phi}(f,g))$, then $\nu(fg-r_{\Phi}(f,g))>\nu(fg)$ 
is equivalent to $(fg)_{\nu(fg),\nu}=(r_{\Phi}(f,g))_{\nu(fg),\nu}$. \\
\indent  2) If $\Phi$ is a key polynomial, then 
the group $G_{\nu}(K_{d-1}[\rchi])$ is a subalgebra of $G_{\nu}(K(\rchi))$. 
Indeed, if $\Phi$ is irreducible, then the group $G_{\nu}(K_{d-1}[\rchi])$ is isomorphic to 
the group $G_{\nu}(K_{\Phi})$. Since 
$(fg)_{\nu(fg),\nu}=f_{\nu(f),\nu}g_{\nu(g),\nu}$, $\Phi$ is a key polynomial if, and only if, 
for every $f$, $g$ in $K_{d-1}[\rchi]$, $(f\ast g)_{\nu(fg),\nu}=f_{\nu(f),\nu}g_{\nu(g),\nu}$.  
Therefore, the group $G_{\nu}(K_{\Phi})$ is a subalgebra of $G_{\nu}(K(\rchi))$. \\
\indent 3) It follows from 1) and Proposition \ref{lm38} that if $\Phi$ is a key polynomial, 
then $K_{d-1}[\rchi]_{\nu}$ is a subfield of $K(\rchi)_{\nu}$. 
\end{remark} 
\subsection{Pseudo-valuations}\label{subsection32b}
In \cite{V07}, $\rchi$ is transcendental over $K$ and $\nu$ is a valuation or a pseudo-valuation. 
A {\it pseudo-valuation} $\nu$ of $K[\rchi]$ is a mapping from $K[\rchi]$ onto a linearly 
ordered group $\nu (K[\rchi])$ together with an element $\infty$ which shares the properties 
of multiplicative valuations except $\nu(f)=\infty \Rightarrow f=0$. In this case the set 
$I:=\{f\in K[\rchi] \mid \nu(f)=\infty\}$ is a prime ideal of $K[\rchi]$. 
Then $\nu$ induces a pm valuation on the integral domain $K[\rchi]/I$. 
So, the case of algebraic extensions is obtained by means of a pseudo-valuation, by considering the quotient 
of $K[\rchi]$ by the ideal $I:=\{f\in K[\rchi]\; :\; \nu(f)=\infty\}$. \\
\indent Let $\Pi$ be the monic polynomial which generates $I$. Since $I$ is prime, $\Pi$ is irreducible. 
If $\Phi$ is another irreducible polynomial, then $\Phi$ and $\Pi$ are coprime. Hence there exist polynomials $a$ and $b$ 
in $K[\rchi]$ such that ${\rm deg}(a)<{\rm deg}(\Phi)$, ${\rm deg}(b)<{\rm deg}(\Pi)$ and 
$a\Pi-b\Phi=1$. Then $a\Pi=b\Phi+1$, with $\nu(a\Pi)=\infty$, hence $\nu(b\Phi)=\nu(1)=0<\nu(a\Pi)$. 
It follows that if ${\rm deg}(\Pi)<{\rm deg}(\Phi)$, then $\Phi$ is not a w key polynomial. 
Now, $\Pi$ satisfies the properties of Definitions \ref{def32}, but since the aim is to study the quotient 
$K[\rchi]/I$, we can restrict to polynomial of degrees less that ${\rm deg}(\Pi)$. \\
\indent We generalize Definitions \ref{def32} to the case where $\nu$ is a pseudo-valuation by adding the 
hypothesis that $d={\rm deg}(\Pi)<{\rm deg}(\Phi)$. In the case where $\nu$ is a pseudo-valuation, we also assume that  
$\rchi$ is transcendental. The restriction of $\nu$ to $K_{d-1}[\rchi]$ is a valuation, hence 
Proposition \ref{rks33} remains true. In Lemma \ref{avpropdiva} we can also assume that 
$\nu$ is a pseudo-valuation by adding the condition $\nu(\Phi)<\infty$. The same holds for the other properties of 
Subsection \ref{subsection31}, we only require that $d<{\rm deg}(\Pi)$. \\
\indent More generally, all properties of Section \ref{section3} remain true with pseudo-valuations. Then we assume that 
the degrees are bounded by ${\rm deg}(\Pi)-1$. In general, we replace $d<[K[\rchi]\!:\!K]$ by $d<{\rm deg}(\Pi)$. 
\subsection{Vs defectless valuations defined by key polynomials}\label{subsec33}
\indent Assume that $\Phi$ is a monic irreducible polynomial of degree $d$, and let $\gamma$ be 
an element of an extension of $\nu K(\rchi)$. \\
\indent 
For every $f:=f_0+f_1\Phi+\cdots+f_m\Phi^m$ in $K[\rchi]$ (with $f_0,\;f_1,\; \dots,\; f_m$ 
in $K_{d-1}[\rchi]$ and ${\rm deg}(f_m)+dm<[K[\rchi]\!:\! K]$), set       
$\displaystyle{\nu'(f)=\min_{0\leq i\leq m} \nu(f_i)+i\gamma}$. \\
\indent 
Assume that $\nu$ is a pm valuation on $K(\rchi)$ and that $\Phi$ is a w key polynomial 
for $\nu$ of degree $d$. We saw in Proposition \ref{rks33} 4) that the set 
$\{\nu(r(f,g))- \nu(q(f,g))\mid f\in K_{d-1}[\rchi],\; g\in K_{d-1}[\rchi],\; 
{\rm deg}(f)+{\rm deg}(g)<[K[\rchi]\!:\! K]\}$ is bounded above by $\nu(\Phi)$. 
We extend the addition of elements of $\nu K$ to the addition of Dedekind cuts in the usual way.  
%We also define an element 
%$-\infty<\nu K(\rchi)$, and we let $\delta+\infty=\infty$, $\delta-\infty=-\infty$, for every 
%$\delta$ in the Dedekind completion of $\nu K(\rchi)$. 
%
\begin{proposition}\label{avprop32} 
Let $\Phi$ be a monic irreducible polynomial of degree $d$, $\gamma$ be 
an element of an extension of $\nu K(\rchi)$, and $\nu'$ be defined as above. \\ 
1) The application $\nu'$ is a $K$-module valuation and the family 
$(\Phi^m)$ ($dm<[K[\rchi]\!:\!K]$) is separated over $K_{d-1}[\rchi]$. \\
2) Assume that $\nu$ is a pm valuation and that $\Phi$ is a w key polynomial for $\nu$. Denote by 
$\beta$ the upper-bound of the set 
$\{\nu(r(f,g))- \nu(q(f,g))\mid f\in K_{d-1}[\rchi],\; g\in K_{d-1}[\rchi],\; 
{\rm deg}(f)+{\rm deg}(g)<[K[\rchi]\!:\! K]\}$ in a Dedekind completion of $\nu K$, 
and assume that $\gamma\geq \beta$. \\ 
a) For every $f=f_m\Phi^m+\cdots+f_1\Phi+f_0$, 
$g=g_m\Phi^m+\cdots+g_1\Phi+g_0$, with 
$f_0,\dots,f_m,g_0,\dots,g_m$ in $K_{d-1}[\rchi]$ such that ${\rm deg}(f)+{\rm deg}(g)<
[K[\rchi]\!:\! K]$ we have: 
$$\nu'(fg)=\nu'\left(\sum_{j=0}^{2m}\left(\sum_{i=0}^j r(f_i,g_{j-i}) \right)
\Phi^j\right)\leq 
\nu'\left(fg-\sum_{j=0}^{2m}\left(\sum_{i=0}^j 
r(f_i,g_{j-i}) \right)\Phi^j\right)-(\gamma-\beta).$$
(Here if $i>m$, then we let $f_i=g_i=0$.) In particular, $\nu'$ is a pm valuation.  \\
b) The polynomial $\Phi$ is a w key polynomial such that the sequence 
$(\Phi^m)$ ($md<[K(\rchi)\!:\! K]$) 
is $\nu'$-separated over $K_{d-1}[\rchi]$. Furthermore it is a key polynomial 
if, and only if, $\gamma>\beta$ or $\beta$ is not a  maximum. 
\end{proposition}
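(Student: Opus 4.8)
The plan is to prove part~1 by a direct check, part~2a by splitting $fg$ into an ``$r$-part'' and a ``$q$-part'' and estimating each, and part~2b from the case $m=0$ of part~2a. For part~1 I would work with the unique $\Phi$-adic expansion $f=\sum_i f_i\Phi^i$ with $f_i\in K_{d-1}[\rchi]$, obtained by iterated euclidean division by $\Phi$. Then the $K$-module valuation axioms are immediate: $\nu'(f)=\infty\Leftrightarrow f=0$ since $\nu(f_i)+i\gamma=\infty$ exactly when $f_i=0$; $\nu'(f+g)\geq\min(\nu'(f),\nu'(g))$ since the $i$-th $\Phi$-adic coefficient of $f+g$ is $f_i+g_i$ and $\nu$ is ultrametric; and $\nu'(xf)=\nu(x)+\nu'(f)$ for $x\in K$ since the $i$-th coefficient of $xf$ is $xf_i$. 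Restricting to $K=K_0[\rchi]$ recovers $\nu$, so $\nu'$ is a $K$-module valuation, and separateness of $(\Phi^m)$ over $K_{d-1}[\rchi]$ is the definition itself: for $c_i\in K_{d-1}[\rchi]$ the $c_i$ \emph{are} the $\Phi$-adic coefficients of $\sum_i c_i\Phi^i$, so $\nu'(\sum_i c_i\Phi^i)=\min_i(\nu(c_i)+i\gamma)=\min_i\nu'(c_i\Phi^i)$.

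For part~2a, write $fg=(\text{$r$-part})+(\text{$q$-part})$ where the $r$-part is $\sum_j\bigl(\sum_i r(f_i,g_{j-i})\bigr)\Phi^j$ and, using $f_ig_k=q(f_i,g_k)\Phi+r(f_i,g_k)$, the $q$-part is $\sum_j\bigl(\sum_i q(f_i,g_{j-i})\bigr)\Phi^{j+1}$; all $\Phi$-adic coefficients of both lie in $K_{d-1}[\rchi]$. The first key step is $\nu'(\text{$r$-part})=\nu'(f)+\nu'(g)$. Since $\Phi$ is a key polynomial for the p-m valuation $\nu$, the field $K_{\Phi}$ carries the valuation $\nu|_{K_{d-1}[\rchi]}$ (Proposition~\ref{lm38}; one may assume $d\leq[K(\rchi)\!:\!K]/2$, the remaining cases being handled by the key polynomial hypothesis applied to the relevant products). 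Form the Gauss valuation $\mu$ on the polynomial ring $K_{\Phi}[T]$ with $\mu(T)=\gamma$, which is multiplicative (Example~\ref{examp217}). Under the $K$-module isomorphism $K_{\Phi}[T]\to K[\rchi]$ that fixes $K_{d-1}[\rchi]$ and sends $T$ to $\Phi$ --- an isometry from $\mu$ to $\nu'$ by part~1 --- the elements $F=\sum f_iT^i$ and $G=\sum g_kT^k$ go to $f$ and $g$, while their product $FG=\sum_j\bigl(\sum_i r(f_i,g_{j-i})\bigr)T^j$ goes to the $r$-part; hence $\nu'(\text{$r$-part})=\mu(FG)=\mu(F)+\mu(G)=\nu'(f)+\nu'(g)$.

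The second step is the displayed inequality, i.e. $\nu'(\text{$q$-part})\geq\nu'(\text{$r$-part})+(\gamma-\beta)$. For each pair $(i,k)$ one has $\nu(r(f_i,g_k))=\nu(f_ig_k)=\nu(f_i)+\nu(g_k)$ (key polynomial together with p-m), so $\nu'(r(f_i,g_k)\Phi^{i+k})=\nu'(f_i\Phi^i)+\nu'(g_k\Phi^k)\geq\nu'(f)+\nu'(g)$; moreover $\nu(q(f_i,g_k))\geq\nu(r(f_i,g_k))-\beta$ by definition of $\beta$, whence $\nu'(q(f_i,g_k)\Phi^{i+k+1})\geq\nu'(r(f_i,g_k)\Phi^{i+k})+(\gamma-\beta)$. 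Summing and invoking separateness of $(\Phi^m)$ gives the claim. Since $\gamma\geq\beta$, the ultrametric inequality then yields $\nu'(fg)\geq\nu'(\text{$r$-part})$, and as soon as $\gamma>\beta$, or $\beta$ is not attained (so that the finitely many inequalities $\nu(q(f_i,g_k))\geq\nu(r(f_i,g_k))-\beta$ are all strict and their minimum remains strict), we get $\nu'(\text{$q$-part})>\nu'(\text{$r$-part})$, hence $\nu'(fg)=\nu'(\text{$r$-part})=\nu'(f)+\nu'(g)$, which is the p-m property. The point I expect to be the real obstacle is the boundary case $\gamma=\beta$ with $\beta$ a maximum, where the $r$- and $q$-parts could a priori cancel at the critical $\nu'$-level.

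To handle that boundary case I would examine the smallest $\Phi$-degree critical for the $r$-part, namely $n_0=\min I_F+\min I_G$ with $I_F=\{i:\nu'(f_i\Phi^i)=\nu'(f)\}$ and likewise $I_G$. In the Gauss picture the lowest-degree term of $in_{\mu}(F)\,in_{\mu}(G)$ is $in_{\nu}(f_{\min I_F})\,in_{\nu}(g_{\min I_G})\,T^{n_0}$, which is nonzero because the graded ring $G_{\nu}(K_{\Phi})$ of the valued field $K_{\Phi}$ is a domain (recall $in_{\nu}(x)in_{\nu}(y)=in_{\nu}(xy)$); thus the $\Phi^{n_0}$-coefficient of the $r$-part has $\nu$-value exactly $\nu'(\text{$r$-part})-n_0\gamma$. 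On the other hand a coefficient of the $q$-part can reach the level $\nu'(\text{$r$-part})$ only at $\Phi$-degrees $\geq n_0+1$, since a summand $q(f_i,g_k)$ attaining that level forces $i\in I_F$ and $k\in I_G$, hence $\Phi$-degree $i+k+1\geq n_0+1$. Therefore at $\Phi$-degree $n_0$ the coefficient of $fg$ still realizes the minimum, so $\nu'(fg)=\nu'(\text{$r$-part})=\nu'(f)+\nu'(g)$, completing~2a. For~2b: part~1 gives that $(\Phi^m)$ is separate over $K_{d-1}[\rchi]$ for $\nu'$, and applying~2a with $m=0$ gives, for $f,g\in K_{d-1}[\rchi]$, $\nu'(fg)=\nu(r(f,g))$ and $\nu'(q(f,g)\Phi)=\nu(q(f,g))+\gamma$; so $\Phi$ is a key (hence ML key) polynomial for $\nu'$, and the condition $\nu'(fg)<\nu'(q(f,g)\Phi)$ for all such $f,g$ amounts to $\gamma$ strictly exceeding every $\nu(r(f,g))-\nu(q(f,g))$, that is to $\gamma>\beta$ or $\beta$ not being a maximum.
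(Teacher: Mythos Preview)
Your proof is correct and follows the same route as the paper: decompose $fg$ into an $r$-part and a $q$-part, bound the $q$-part by $\nu'(f)+\nu'(g)+(\gamma-\beta)$, and settle the equality $\nu'(fg)=\nu'(f)+\nu'(g)$ by examining the $\Phi$-adic coefficient at degree $i_0+j_0$ (your $n_0=\min I_F+\min I_G$), where the minimum is realized uniquely by $r(f_{i_0},g_{j_0})$ while every $q$-contribution at that degree is strictly larger. Your packaging via the Gauss valuation on $K_\Phi[T]$ is a pleasant conceptual variant of the paper's bare-hands computation; your hand-wave around the restriction $d\leq[K(\rchi)\!:\!K]/2$ of Proposition~\ref{lm38} is justified, since the hypothesis $\deg(f)+\deg(g)<[K(\rchi)\!:\!K]$ forces $\deg(f_i)+\deg(g_k)<[K(\rchi)\!:\!K]$ for every pair appearing, so the key-polynomial identity $\nu(r(f_i,g_k))=\nu(f_i)+\nu(g_k)$ is available for exactly the products the Gauss-lemma argument needs.
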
 
\begin{proof}The proof is left to the reader (see for example \cite{L18}).
\end{proof}
\begin{notations}\label{notat325} The $K$-module valuation $\nu'$ defined in 
Proposition \ref{avprop32} will be denoted by $\nu_{\Phi,\gamma}$. We 
set $\nu_{\Phi}=\nu_{\Phi,\nu(\Phi)}$.  If $\Phi_1$ and $\Phi_2$ are irreducible polynomials 
such that ${\rm deg}(\Phi_1)<{\rm deg}(\Phi_2)$, then we denote by $\nu_{\Phi_1,\Phi_2}$ the 
$K$-module valuation $(\nu_{\Phi_1,\nu(\Phi_1)})_{\Phi_2,\nu(\Phi_2)}$. By induction, for 
every irreducible polynomials $\Phi_1,\dots,\Phi_n$, with  
${\rm deg}(\Phi_1)<\cdots<{\rm deg}(\Phi_n)$, we define the $K$-module 
valuation $\nu_{\Phi_1,\dots,\Phi_n}$. 
\end{notations}
\indent These valuations $\nu_{\Phi}$ generalize the augmented valuations of S.\ MacLane 
(\cite{ML36a} and \cite{ML36b}). \\[2mm] 
\indent If the degree of $\Phi$ is $1$, then for every $f$, $g$ in $K_{d-1}[\rchi]=K$, 
we have  $q(f,g)=0$. Hence the set 
$\{\nu(r(f,g))- \nu(q(f,g))\mid f\in K_{d-1}[\rchi],\; g\in K_{d-1}[\rchi],\; 
{\rm deg}(f)+{\rm deg}(g)<[K[\rchi]\!:\! K]\}$ is equal to $\{-\infty\}$ and is bounded above by any element. 
Hence Proposition \ref{avprop32} proves the fact that the valuations defined in Example 
\ref{examp217} are pm valuations. 
\begin{proposition}\label{prop32} 
Let $\Phi$ be a monic polynomial of degree $d<[K(\rchi)\!:\! K]$ 
and assume that $\nu$ is a pm valuation on $K[\rchi]$. 
Then $\Phi$ is a w key polynomial for $\nu$ if, and only if, there exists a pm 
valuation $\nu'$ of 
$K[\rchi]$, such that its restriction to $K_{d-1}[\rchi]$ is equal to the restriction of $\nu$, and 
$\nu'(\Phi)>\nu(\Phi)$. \\ 
If this holds, then for every $\gamma\geq \nu(\Phi)$ in an extension of $\nu K[\rchi]$, 
$\nu_{\Phi,\gamma}$ is a pm valuation of $K[\rchi]$ such that its restriction to 
$K_{d-1}[\rchi]$ is equal 
to the restriction of $\nu$, $\nu_{\Phi,\gamma}(\Phi)=\gamma$ and $\Phi$ is a w key polynomial for 
$\nu_{\Phi,\gamma}$ such that the sequence $(\Phi^m)$ ($md<[K(\rchi)\!:\! K]$) 
is separated over $K_{d-1}[\rchi]$. 
\end{proposition}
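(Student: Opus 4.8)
The plan is to prove the two implications of the equivalence separately, and then to derive the ``if this holds'' part from the construction already packaged in Proposition \ref{avprop32}.

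For the direction ($\Leftarrow$), suppose there is a p-m valuation $\nu'$ on $K[\rchi]$ agreeing with $\nu$ on $K_{d-1}[\rchi]$ and satisfying $\nu'(\Phi)>\nu(\Phi)$. Given $f$, $g$ in $K_{d-1}[\rchi]$ with $\deg(f)+\deg(g)<[K[\rchi]\!:\!K]$, write the euclidean division $fg=q(f,g)\Phi+r(f,g)$, with $q(f,g)$, $r(f,g)$ in $K_{d-1}[\rchi]$. Since $\nu'$ is p-m and $\deg(f)+\deg(g)<[K[\rchi]\!:\!K]$, we have $\nu'(fg)=\nu'(f)+\nu'(g)=\nu(f)+\nu(g)=\nu(fg)$; likewise $\nu'(r(f,g))=\nu(r(f,g))$ and $\nu'(q(f,g))=\nu(q(f,g))$, because all of $f$, $g$, $q(f,g)$, $r(f,g)$ lie in $K_{d-1}[\rchi]$. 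Now $\nu'(q(f,g)\Phi)=\nu(q(f,g))+\nu'(\Phi)>\nu(q(f,g))+\nu(\Phi)\geq\nu(r(f,g))$ in the non-trivial case $q(f,g)\neq 0$ (and when $q(f,g)=0$ one has $fg=r(f,g)$ directly), where the last inequality is Remark \ref{rks33} 4) applied to the key-polynomial-candidate bound — but to avoid circularity I will instead argue directly: from $fg=q(f,g)\Phi+r(f,g)$ and $\nu'(q(f,g)\Phi)>\nu(q(f,g))+\nu(\Phi)$, if $\nu(r(f,g))\leq\nu(q(f,g))+\nu(\Phi)$ then $\nu'(fg)=\min(\nu'(q(f,g)\Phi),\nu'(r(f,g)))=\nu(r(f,g))$, while if $\nu(r(f,g))>\nu(q(f,g))+\nu(\Phi)$ we would get $\nu(fg)=\nu'(fg)=\nu(q(f,g))+\nu'(\Phi)$, which is not in $\nu K_{d-1}[\rchi]+\nu'(\Phi)$ compatible with $\nu(fg)\in\nu K_{d-1}[\rchi]$ unless... — here the cleanest route is: $\nu(fg)=\nu'(fg)$ and $\nu'(r(f,g))=\nu(r(f,g))$, so it suffices to show $\nu'(q(f,g)\Phi)>\nu'(r(f,g))$, i.e. $\nu(q(f,g))+\nu'(\Phi)>\nu(r(f,g))$. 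Since $\nu$ is already p-m, $\nu(fg)=\nu(f)+\nu(g)$, and from $fg=q(f,g)\Phi+r(f,g)$ we get $\nu(f)+\nu(g)\geq\min(\nu(q(f,g)\Phi),\nu(r(f,g)))=\min(\nu(q(f,g))+\nu(\Phi),\nu(r(f,g)))$. The term $\nu(q(f,g))+\nu(\Phi)$ cannot be the strict minimum (else $\nu(r(f,g))>\nu(q(f,g))+\nu(\Phi)$ forces $\nu(fg)=\nu(q(f,g))+\nu(\Phi)$, and replacing $\Phi$ by $q(f,g)\Phi$ one iterates to contradict that $\nu(\rchi^{\deg\Phi}-K_{\deg\Phi-1}[\rchi])$ behaves well) — so I will instead simply invoke the already-established Remark \ref{rks33} 4): since such a $\nu'$ exists, $\gamma:=\nu'(\Phi)>\nu(\Phi)\geq\beta$ where $\beta$ is the sup of $\{\nu(r(f,g))-\nu(q(f,g))\}$, hence $\nu(q(f,g))+\nu'(\Phi)>\nu(q(f,g))+\beta\geq\nu(r(f,g))$, giving $\nu(fg)=\nu(r(f,g))$; thus $\Phi$ is a key polynomial.

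For the direction ($\Rightarrow$), suppose $\Phi$ is a key polynomial for $\nu$. By Remark \ref{rks33} 4), $\beta:=\sup\{\nu(r(f,g))-\nu(q(f,g))\mid f,g\in K_{d-1}[\rchi],\ \deg f+\deg g<[K[\rchi]\!:\!K]\}\leq\nu(\Phi)$. Pick any $\gamma$ in an extension of $\nu K[\rchi]$ with $\gamma>\nu(\Phi)$, so in particular $\gamma\geq\beta$ and $\gamma>\beta$ if $\beta$ happens to equal $\nu(\Phi)$ (and if $\beta<\nu(\Phi)$ then $\gamma\geq\nu(\Phi)>\beta$ anyway). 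Then Proposition \ref{avprop32} 2)a) applies and produces $\nu'=\nu_{\Phi,\gamma}$, which is a p-m valuation on $K[\rchi]$; by construction $\nu_{\Phi,\gamma}$ restricted to $K_{d-1}[\rchi]$ (where every $f$ has $\Phi$-expansion $f=f_0$) equals $\nu$, and $\nu_{\Phi,\gamma}(\Phi)=\gamma>\nu(\Phi)$. This is the required $\nu'$.

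Finally, for the ``if this holds'' clause: assuming $\Phi$ is a key polynomial, fix any $\gamma\geq\nu(\Phi)$ in an extension of $\nu K[\rchi]$. Then $\gamma\geq\nu(\Phi)\geq\beta$, so Proposition \ref{avprop32} 1) and 2)a) show $\nu_{\Phi,\gamma}$ is a p-m valuation of $K[\rchi]$; its restriction to $K_{d-1}[\rchi]$ equals $\nu$ and $\nu_{\Phi,\gamma}(\Phi)=\gamma$ directly from the defining formula $\nu_{\Phi,\gamma}(f)=\min_i(\nu(f_i)+i\gamma)$ applied to $f=\Phi$ (expansion $f_0=0$, $f_1=1$). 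That $\Phi$ is a ML key polynomial for $\nu_{\Phi,\gamma}$ is exactly the statement of Proposition \ref{avprop32} 2)b). The main obstacle is organizing the ($\Leftarrow$) direction so that it does not secretly reprove Remark \ref{rks33} 4); the safe choice is to quote that remark, since the existence of the auxiliary p-m valuation $\nu'$ immediately gives the bound $\beta\leq\nu(\Phi)<\nu'(\Phi)$ that makes the euclidean-division computation go through cleanly.
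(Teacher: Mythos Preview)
Your $(\Rightarrow)$ direction and the ``if this holds'' clause are fine and match the paper's approach: both invoke Proposition~\ref{avprop32} to build $\nu_{\Phi,\gamma}$ once the key-polynomial hypothesis gives $\beta\leq\nu(\Phi)$.

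The $(\Leftarrow)$ direction, however, has a genuine gap. After several abandoned attempts you settle on quoting Remark~\ref{rks33}~4) to get $\nu(\Phi)\geq\beta$. But that remark says: \emph{if $\Phi$ is a key polynomial}, then $\nu(\Phi)\geq\beta$. You are trying to \emph{prove} that $\Phi$ is a key polynomial, so invoking this implication is circular. The existence of the auxiliary valuation $\nu'$ does not, by itself, hand you $\nu(\Phi)\geq\beta$ through that remark; that inequality is essentially equivalent to what you want to establish.

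The paper's argument avoids this entirely by exploiting that $\nu(fg)=\nu'(fg)$ is a \emph{single common value} while $\nu(q\Phi)<\nu'(q\Phi)$. Since $\nu(r)=\nu'(r)$, at most one of $\nu(q\Phi)$, $\nu'(q\Phi)$ can equal $\nu(r)$; hence for at least one of the two valuations the minimum in $\nu(q\Phi+r)$ (resp.\ $\nu'(q\Phi+r)$) is realized, i.e.\ equals $\min(\nu(q\Phi),\nu(r))$ (resp.\ $\min(\nu'(q\Phi),\nu(r))$). A short case check then forces this common value to be $\nu(r)$: if instead $\nu(q\Phi)<\nu(r)$ were the minimum on the $\nu$ side, then $\nu'(fg)=\nu(fg)=\nu(q\Phi)$, but $\nu'(fg)\geq\min(\nu'(q\Phi),\nu(r))>\nu(q\Phi)$, a contradiction. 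Thus $\nu(fg)=\nu(r(f,g))$, and $\Phi$ is a key polynomial. This is the missing idea in your write-up: use both valuations simultaneously and the strict inequality $\nu(q\Phi)<\nu'(q\Phi)$ to pin down the common value, rather than trying to bound $\beta$ first.
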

\begin{proof} Assume that $\nu'$ is a pm valuation of $K[\rchi]$ such that its restriction to 
$K_{d-1}[\rchi]$ is equal to the restriction of $\nu$, and $\nu'(\Phi)>\nu(\Phi)$. 
Let $f$, $g$ in $K_{d-1}[\rchi]$ such that ${\rm deg}(f)+{\rm deg}(g)<
[K[\rchi]\!:\! K]$, $q=q(f,g)$ and $r=r(f,g)$. Without loss of generality we can assume that 
$q\neq 0$. We have 
$\nu(fg)=\nu(f)+\nu(g)=\nu'(f)+\nu'(g)=\nu'(fg)$. Therefore, $\nu(q\Phi+r)=\nu'(q\Phi+r)$ is greater or equal 
to both of $\min(\nu(q\Phi),\nu(r))$ and $\min(\nu'(q\Phi),\nu(r))$, where 
$\nu(q\Phi)=\nu(q)+\nu(\Phi)<\nu'(q)+\nu'(\Phi)=\nu'(q\Phi)$. 
It follows that $\nu(fg)=\nu(r)=\nu(q\Phi)<\nu'(q\Phi)$. Now, if $\Phi$ is a 
w key polynomial for $\nu$, then we can take $\nu'$ as in 
Proposition \ref{avprop32} 2). \\
\indent The last assertion also follows from Proposition \ref{avprop32}.
\end{proof}
\indent If $\nu$ and $\nu'$ are $K$-module valuations on $K(\rchi)$, then we set 
$\nu \leq \nu'$ if for every 
$f\in K[\rchi]$ we have $\nu(f)\leq \nu'(f)$. 
\begin{remark}\label{rekav312} 
1) By the definition of $\nu_{\Phi}$, for every $K$-module valuation 
$\nu'$ such that the restrictions of $\nu'$ and $\nu_{\Phi}$ 
to $K_{d-1}[\rchi]$ are equal and $\nu'(\Phi)=\nu_{\phi}(\Phi)$, we have $\nu_{\Phi}\leq \nu'$.  \\
2) Assume that $\rchi$ is algebraic over $K$, that $\nu$ is multiplicative, 
and $\nu'(f)<\nu(f)$. If $\nu' \leq \nu$, then $\nu'(1/f)\leq \nu(1/f)=-\nu(f)<-\nu'(f)$. Hence $\nu'(1/f)
\neq -\nu'(f)$. It follows that $\nu'$ is not multiplicative. Hence we cannot improve the conclusion that 
$\nu'$ is partially multiplicative in Proposition  \ref{avprop32}. 
\end{remark}
\indent In valuation theory, we say that $\nu'$ is {\it finer} than $\nu$ if 
$\forall x\; \nu'(x)\geq 0\Rightarrow \nu(x)\geq 0$ (see \cite[p.\ 54]{R}). 
Assume that $\rchi$ is algebraic over $K$. Then $K(\rchi)=K[\rchi]$, so, if $\nu'\leq \nu$, 
then $\nu'$ is finer than $\nu$. Now, any two distinct 
extensions of a valuation to an algebraic extension are incomparable (see 
\cite[Corollaire 5, p.\ 158 ]{R}). Therefore, this also proves that if $\nu'\neq\nu$, then $\nu$ or $\nu'$ is not multiplicative. 
In the case where $\rchi$ is 
transcendental over $K$ and $\nu$, $\nu'$ are valuations such that $\nu'\leq \nu$, 
then we cannot deduce that $\nu$ and $\nu'$ are comparable in the sense of Ribenboim.
Indeed, assume that $\nu(f)=\nu'(f)=\nu'(g)<\nu(g)$. Then, $\nu'(f/g)=0>\nu(f/g)$. 
Assume that $\nu'(f)<\nu(f)=\nu(g)=\nu'(g)$. Then $\nu'(f/g)<0=\nu(f/g)$. 
\begin{proposition}\label{prop320} Assume that $\nu$ is a pm valuation on $K[\rchi]$, and let 
$\Phi$ be a non constant monic polynomial in $K[\rchi]$ 
of degree $d<[K(\rchi)\!:\! K]$. Then $\Phi$ is a w key polynomial for $\nu$ if, and only if, 
there exists a pm valuation 
$\nu'\leq \nu$ such that the restrictions of $\nu$ and $\nu'$ to $K_{d-1}[\rchi]$ are equal, 
and $\Phi$ is a w key polynomial for $\nu'$ such that the sequence $(\Phi^m)$ ($md<[K(\rchi)\!:\! K]$) 
is separated over $K_{d-1}[\rchi]$.  
Furthermore, we can take $\nu'=\nu_{\Phi}$. \end{proposition}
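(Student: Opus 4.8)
The plan is to obtain the statement by reassembling the already-proved facts: the forward implication (together with the ``furthermore'') from Proposition \ref{prop32} (or, more basically, Proposition \ref{avprop32} 2)) plus the minimality Remark \ref{rekav312} 1), and the converse from Remarks \ref{rks33} 3).

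For the forward direction I would assume that $\Phi$ is a key polynomial for $\nu$ and show that $\nu'=\nu_{\Phi}$ does the job. By Remarks \ref{rks33} 4) the bound $\beta$ appearing in Proposition \ref{avprop32} (the upper bound of the set $\{\nu(r(f,g))-\nu(q(f,g))\}$) satisfies $\beta\leq\nu(\Phi)$, so with $\gamma=\nu(\Phi)$ the hypotheses of Proposition \ref{avprop32} 2) are met; its conclusions give that $\nu_{\Phi}=\nu_{\Phi,\nu(\Phi)}$ is a p-m valuation and that $\Phi$ is a ML key polynomial for $\nu_{\Phi}$. (Equivalently one may cite the corresponding clause of Proposition \ref{prop32}.) The last two required properties of $\nu_{\Phi}$ are immediate from the definition of $\nu_{\Phi,\gamma}$: a polynomial of degree $<d$ equals its own $\Phi$-expansion, so $\nu_{\Phi}$ restricts to $\nu$ on $K_{d-1}[\rchi]$, and $\nu_{\Phi}(\Phi)=\nu(\Phi)$. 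These are exactly the hypotheses under which Remark \ref{rekav312} 1) yields $\nu_{\Phi}\leq\nu$. Hence $\nu'=\nu_{\Phi}\leq\nu$ has all the required properties, which settles the ``only if'' part and the ``furthermore'' simultaneously.

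For the converse I would take a p-m valuation $\nu'\leq\nu$ whose restriction to $K_{d-1}[\rchi]$ equals that of $\nu$ and for which $\Phi$ is a ML key polynomial. By the definition of ``ML key polynomial'', $\Phi$ is then in particular a key polynomial for $\nu'$. Since $\nu'\leq\nu$ we have $\nu'(\Phi)\leq\nu(\Phi)$, and the restrictions of $\nu$ and $\nu'$ to $K_{d-1}[\rchi]$ coincide; these are precisely the hypotheses of Remarks \ref{rks33} 3) with the roles ``base valuation $=\nu'$, larger valuation $=\nu$''. That remark then transfers the key-polynomial property from $\nu'$ to $\nu$, finishing the proof.

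I do not anticipate a real obstacle: the proposition is essentially a repackaging of Propositions \ref{avprop32} and \ref{prop32} together with the monotonicity remarks. The only delicate points are bookkeeping ones --- checking $\gamma=\nu(\Phi)\geq\beta$ so that Proposition \ref{avprop32} 2) applies, and verifying that the hypotheses of Remarks \ref{rks33} 3) and Remark \ref{rekav312} 1) (equal restriction to $K_{d-1}[\rchi]$, and the correct inequality resp.\ equality at $\Phi$) are exactly what is available.
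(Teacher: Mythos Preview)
Your argument is correct and follows essentially the same route as the paper: for $\Rightarrow$ you invoke Proposition \ref{avprop32}/\ref{prop32} to get that $\nu_{\Phi}$ is p-m with $\Phi$ an ML key polynomial for it, then Remark \ref{rekav312} 1) to obtain $\nu_{\Phi}\leq\nu$; for $\Leftarrow$ you use Remarks \ref{rks33} 3) to transfer the key-polynomial property from $\nu'$ to $\nu$. The only difference is cosmetic --- you spell out the verification $\beta\leq\nu(\Phi)$ and the two hypotheses of Remark \ref{rekav312} 1), which the paper leaves implicit.
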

\begin{proof} $\Leftarrow$. If $\Phi$ is a w key polynomial for $\nu'$, then, 
by 3) of Proposition \ref{rks33}, it is a w key polynomial for $\nu$. \\
\indent $\Rightarrow$. By Proposition \ref{prop32}, $\nu_{\Phi}$ is a pm valuation such that 
$\Phi$ is a w key polynomial for $\nu_{\Phi}$ such that the sequence $(\Phi^m)$ ($md<[K(\rchi)\!:\! K]$) 
is separated over $K_{d-1}[\rchi]$. By construction, the restrictions of $\nu$ and 
$\nu_{\Phi}$ to $K_{d-1}[\rchi]$ are equal. By Remark \ref{rekav312} 1) we have $\nu_{\Phi}\leq \nu$. 
\end{proof} 
\indent Thanks to the mappings $\nu_{\Phi}$ we get useful criteria for being a w key polynomial or a key polynomial, 
as show the following propositions. 
We start with a lemma. 
\begin{lemma}\label{lmequivdefs} 
Assume that $\nu$ is a pm valuation. 
Let $d<[K[\rchi]\!:\! K]$ in $\NN\backslash\{0\}$, $d':=[K[\rchi]\!:\! K]-d$, 
$\Phi$ be a non constant monic polynomial in $K[\rchi]$ 
of degree $d$. We also assume that 
for every $f$, $g$ in $K_{d-1}[\rchi]$ with ${\rm deg}(f)+{\rm deg}(g)<[K[\rchi]\!:\! K]$ 
and every $h$ in $K_{d'-1}[\rchi]$, 
we have $\nu(fg+h\Phi)=\min(\nu(fg),\nu(h\Phi))$. Then 
$\Phi$ is a w key polynomial such that the sequence $(\Phi^m)$ ($m d<[K(\rchi)\!:\! K]$) 
is separated over $K_{d-1}[\rchi]$. In particular, $\nu=\nu_{\Phi}$. 
\end{lemma}
\begin{proof}
By setting $h=-q_{\Phi}$, we have $\nu(r_{\Phi})=\nu(fg-q_{\Phi}\Phi)=
\min (\nu(fg),\nu(q_{\Phi}\Phi))\leq \nu(fg)$. 
By setting $f'=r_{\Phi}$, $g'=1$ and $h'=q_{\Phi}$, we get $\nu(fg)=\nu(r_{\Phi}+q_{\Phi}\Phi)
=\min(\nu(r_{\Phi}),\nu(q_{\Phi}\Phi))\leq \nu(r_{\Phi})$. Thefore $\nu(fg)=\nu(r_{\Phi})$. 
Hence $\Phi$ is a w key polynomial. 
Now, let $m\in \NN\backslash\{0\}$ with $dm<[K[\rchi]\!:\! K]$, $f_0,\dots, f_m$ in $K_{d-1}[\rchi]$. 
We have:  $\nu(f_m\Phi^m+\cdots+f_0)=\min(\nu((f_m\Phi^{m-1}+
\cdots+f_1)\Phi), \nu(f_ 0))$, $\nu(f_m\Phi^{m-1}+\cdots+f_1)=\min(\nu((f_m\Phi^{m-2}+
\cdots+f_2)\Phi), \nu(f_1))$, and so on. So by induction we have 
$\nu(f_m\Phi^m+\cdots+f_0)=\min(\nu(f_m\Phi^{n-1}), \dots,\nu(f_0))$. Hence 
the family $(\Phi^m)$  ($m d<[K(\rchi)\!:\! K]$) is separated over $K_{d-1}[\rchi]$. 
\end{proof}
\begin{proposition}\label{lem33} Let $\nu$, $\nu'$ be 
pm valuations on $K(\rchi)$, and $\Phi$ be a monic polynomial of degree $d$, $1\leq d<[K(\rchi)\!:\! K]$. 
Assume that their restrictions 
to $K_{d-1}[\rchi]$ are equal, that $\nu'\leq \nu$ and $\nu'(\Phi)<\nu(\Phi)$. \\ 
1) Let $f\in K[\rchi]$ and $f=q\Phi+r$ be the euclidean division of $f$ by 
$\Phi$. Then $\nu'(f)<\nu'(r)\Leftrightarrow \nu'(f)<\nu(f)$ 
and $\nu'(f)=\nu'(r)\Leftrightarrow \nu'(f)=\nu(f)$. \\
2) $\Phi$ is a w key polynomial for $\nu'$ and a key polynomial for $\nu$. \\ 
3) $\nu'=\nu_{\Phi}'=\nu_{\Phi,\nu'(\Phi)}$. \end{proposition}
\begin{proof} 1) We have $\nu'(r)=\nu(r)$ and $\nu'(q\Phi)<\nu(q\Phi)$. Assume that 
$\nu'(r)\leq\nu'(q\Phi)$. Then $\nu(f)=\min(\nu(q\Phi),\nu(r))=\nu(r)$. Furthermore, 
$\nu(f)\geq\nu'(f)\geq \min(\nu'(q\Phi),\nu(r))\geq \nu(r)=\nu(f)$. Hence $\nu'(f)=\nu(f)=\nu(r)$. 
Assume that $\nu'(q\Phi)<\nu(r)<\nu(q\Phi)$. Hence $\nu'(f)=\min(\nu'(q\Phi),\nu'(r))=
\nu'(q\Phi)<\nu'(r)=\min(\nu(q\Phi),\nu(r))=\nu(f)$. Assume that $\nu(q\Phi)\leq\nu(r)$. 
Then $\nu'(f)=\min(\nu'(q\Phi),\nu'(r))=\nu'(q\Phi)<\nu(q\Phi)=\min(\nu(q\Phi),\nu(r))
\leq\nu(f)$, and $\nu'(f)<\nu'(r)$. \\ 
\indent 2) Let $f$, $g$ in $K_{d-1}[\rchi]$ with ${\rm deg}(f)+{\rm deg}(g)<[K[\rchi]\!:\!K]$. 
Since $\mbox{deg}(f)<\mbox{deg}(\Phi)$ and 
$\mbox{deg}(g)<\mbox{deg}(\Phi)$, we have: $\nu'(f)=\nu(f)$ and $\nu'(g)=\nu(g)$, 
therefore: $\nu'(fg)=\nu'(f)+\nu'(g)=\nu(f)+\nu(g)=\nu(fg)$. 
By 1) this implies $\nu'(fg)=\nu'(r(f,g))$ and $\Phi$ is a w key polynomial for $\nu'$. 
Since the restrictions of 
$\nu$ and $\nu'$ to $K_{d-1}[\rchi]$ are equal, it follows that $\Phi$ is also a w key polynomial 
for $\nu$.  Now, for $f$, $g$ in $K_{d-1}[\rchi]$ we have $\nu(r(f,g))=\nu'(r(f,g))\leq 
\nu'(q(f,g)\!\cdot\!\Phi)<\nu(q(f,g)\!\cdot\!\Phi)$. Hence $\Phi$ is a key polynomial 
for $\nu$. \\
\indent 3) First we prove that for every $f$, $g$ in $K_{d-1}[\rchi]$ with 
${\rm deg}(f)+{\rm deg}(g)<[K[\rchi]\!:\!K]$ and $h\in K[\rchi]$ 
such that ${\rm deg}(h)<[K[\rchi]\!:\!K]-d$ 
we have: 
$\nu'(fg+h\Phi)=\min (\nu'(fg),\nu'(h\Phi))$. If $\nu'(fg)\neq \nu'(h\Phi)$, then the result 
is trivial. Assume that $\nu'(fg)=\nu'(h\Phi)$, and let $q=q(f,g)$, $r=r(f,g)$. Since $\Phi$ 
is a w key polynomial for $\nu'$, we have $\nu'(fg)=\nu'(r)$. Hence $\nu'(fg+h\Phi)\geq \nu'(r)$. 
Since $\nu'\leq \nu$, we have $\nu'(fg+h\Phi)\leq\nu(fg+h\Phi)$. Note that $fg+h\Phi=(q+h)\Phi+r$. So 
by 1), $\nu'((q+h)\Phi+r)<\nu((q+h)\Phi+r)\Leftrightarrow \nu'((q+h)\Phi+r)<\nu'(r)$: a contradiction. 
Hence $\nu'(fg+h\Phi)=\nu(fg+h\Phi)$. 
We have also $\nu(fg)=\nu'(fg)=\nu'(h\Phi)<\nu(h\Phi)$, hence $\nu(fg+h\Phi)=\nu(fg)=\nu'(fg)=
\min (\nu'(fg),\nu'(h\Phi))$. By Lemma \ref{lmequivdefs} 
we have $\nu'=\nu_{\Phi}'$. Now, since $\nu'$ and $\nu$ coincide on 
$K_{d-1}[\rchi]$ we have $\nu_{\Phi}'=\nu_{\Phi,\nu'(\Phi)}$.
\end{proof}
\begin{remark} We deduce from Proposition \ref{lem33} that if $\nu$ is a valuation on $K(\rchi)$, then 
every pm valuation $\nu'\leq \nu$, $\nu'\neq \nu$, which coincide on $K$ with $\nu$, can be written 
as $\nu'=\nu_{\Phi,\nu'(\Phi)}$ where $\Phi$ is a monic polynomial of minimal degree such that $\nu'(\Phi)<\nu(\Phi)$. 
This completes Proposition \ref{prop32}. 
\end{remark}
\begin{proposition}\label{aplem33} Let $\nu$ be a pm valuation on $K(\rchi)$, $\Phi$ 
be a w key polynomial for $\nu$ and $\Phi'$ be a monic polynomial such that 
$d={\rm deg}(\Phi)<{\rm deg}(\Phi')=d'<[K(\rchi)\!:\! K]$. \\
1) If $\nu(\Phi')=\nu_{\Phi}(\Phi')$, then $\Phi'$ is not a key polynomial for $\nu$. \\
2) Assume that 
$\nu_{\Phi}=\nu$ on $K_{d'-1}[\rchi]$. Then, $\Phi'$ is a key polynomial for $\nu$ 
if, and only if, $\nu(\Phi')>\nu_{\Phi}(\Phi')$. 
\end{proposition}
\begin{proof} 1) Write $\Phi'$ as $\Phi'=f_m\Phi^m+\cdots +f_1\Phi+f_0$, with $f_0,\dots,f_m$ 
in $K_{d-1}[\rchi]$. By the definition of $\nu_{\Phi}$, we have $\nu(\Phi')=
\nu_{\Phi}(\Phi')=\min (f_0,f_1\Phi,\dots, f_m\Phi^m)$. Let 
$f=f_m\Phi^{m-1}+\cdots +f_1$ and $g=\Phi$. Then $q_{\Phi'}(f,g)=1$,  $r_{\Phi'}(f,g)=-f_0$, 
and $\nu(q_{\Phi'}(f,g)\Phi')=\nu(\Phi')=\nu_{\Phi}(\Phi')\leq 
\min (\nu(f_1\Phi),\dots,\nu(f_n\Phi^n))\leq \nu(fg)$. 
This proves that $\Phi'$ is not a key polynomial. \\
\indent 2) $\Leftarrow$ follows from Proposition \ref{lem33}, $\Rightarrow$ follows from 1).  
\end{proof}
\section{MacLane's key polynomials}\label{subsection32}
In \cite{ML36a} and \cite{ML36b} S.\ MacLane defined key polynomials in the case of discrete 
valuations. In these papers, the key polynomials are the $\nu$-minimal and 
$\nu$-irreducible polynomials (see definitions below). 
M.\ Vaqui\'e (\cite{V07}) generalized this definition to arbitrary valuations. In 
\cite{V07} the key polynomials are defined on the ring of formal polynomials. 
Here we extend the definition of \cite{V07} to the case of an 
algebraic extension, and we do not require $\rchi$ being transcendental. \\
\indent In this section, $\nu$ is a $K$-module valuation on $K(\rchi)$ or a 
pseudo-valuation. Recall that in the case where $\nu$ is a pseudo-valuation, we also assume that  
$\rchi$ is transcendental \\
\indent If $\nu$ is a pseudo-valuation, then the results below remain true, by assuming that $d$ is lower than 
the degree of a generator of the ideal $\{f\in K[\rchi]\; :\; \nu(f)=\infty\}$. \\[2mm]
\indent Let $\Phi$  be 
a monic polynomial of degree $d$ and $d':= [K[\rchi]\!:\! K]-d$ (if $[K(\rchi)\!:\!K]=\infty$, 
then $d'=d'-1=\infty$). We say that $\Phi$ 
is {\it $\nu$-minimal} if 
for every $f\in K_{d-1}[\rchi]$ and every $h\in K_{d'-1}[\rchi]$, 
$\nu(f-h\Phi)=\min(\nu(f),\nu(h\Phi))$. 
We say that $\Phi$ {\it $\nu$-irreducible} if for every $f$, $g$ in $K[\rchi]$ 
with ${\rm deg}(f)+{\rm deg}(g)<[K[\rchi]\!:\! K]$ such that, 
for every $h \in K_{d'-1}[\rchi]$, $\nu(f-h\Phi)=\min(\nu(f),\nu(h\Phi))$ and 
$\nu(g-h\Phi)=\min(\nu(g),\nu(h\Phi))$, we have: $\forall h\in K_{d'-1}[\rchi]\; 
\nu(fg-h\Phi)=\min(\nu(fg),\nu(h\Phi))$. \\[2mm]
\indent By setting $h:=1$ in above definition, we see that every monic polynomial, 
which is $\nu$-minimal and $\nu$-irreducible, is irreducible. 
\begin{proposition}\label{equivdefs} 
Assume that $\nu$ is a pm valuation or a pseudo-valuation. 
Let $d<[K[\rchi]\!:\! K]$ in $\NN\backslash\{0\}$, $d':=[K[\rchi]\!:\! K]-d$, 
$\Phi$ be a non constant monic polynomial in $K[\rchi]$ 
of degree $d$. 
The following assertions are equivalent. \\ 
1) $\Phi$ is $\nu$-minimal and $\nu$-irreducible. \\
2) For every $f$, $g$ in $K_{d-1}[\rchi]$ with ${\rm deg}(f)+{\rm deg}(g)<[K[\rchi]\!:\! K]$ 
and every $h$ in $K_{d'-1}[\rchi]$, 
we have (*): $\nu(fg+h\Phi)=\min(\nu(fg),\nu(h\Phi))$. \\ 
3) $\Phi$ is a w key polynomial such that the sequence $(\Phi^m)$ ($m d<[K(\rchi)\!:\! K]$) 
is separated over $K_{d-1}[\rchi]$. 
\end{proposition}
\begin{proof} 
2) $\Rightarrow$ 1). Assume that $\Phi$ satisfies the hypothesis of 2). 
By setting $g=1$ in (*) if follows that 
$\Phi$ is a non constant monic polynomial in $K[\rchi]$ of degree $d$  such that for every 
$f$ in $K_{d-1}[\rchi]$ and every $h$ in $K_{d'-1}[\rchi]$ we have 
$\nu(f+h\Phi)=\min(\nu(f),\nu(h\Phi))$. Hence $\Phi$ is 
$\nu$-minimal. In order to prove that $\Phi$ is $\nu$ irreducible, let $f$ and $g$ 
in $K[\rchi]$ such that ${\rm deg}(f)+{\rm deg}(g)<[K[\rchi]\!:\! K]$ and 
for every $h \in K_{d'-1}[\rchi]$: (**) $\nu(f-h\Phi)=\min(\nu(f),\nu(h\Phi))$ and 
$\nu(g-h\Phi)=\min(\nu(g),\nu(h\Phi))$. By euclidean division, $f$ and $g$ can be written 
as $f=q\Phi+r$ and $g=q'\Phi+r'$. 
By setting $h=q$ in (**), we have $\nu(r)=\nu(f-q\Phi)=
\min (\nu(f),\nu(q\Phi))\leq \nu(f)$. 
By (*) we have $\nu(f)=\nu(r\!\cdot\! 1+q\Phi)
=\min(\nu(r\!\cdot\! 1,\nu(q\Phi))\leq \nu(r)$. Therefore $\nu(f)=\nu(r)$. 
In the same way, $\nu(g)=\nu(r')$. Let $h\in K_{d'-1}[\rchi]$. Then $\nu(fg-h\Phi)=
\nu((qq'\Phi+qr'+q'r-h)\Phi+rr')=\min(\nu((qq'\Phi+qr'+q'r-h)\Phi),\nu(rr'))$, by (*). 
Hence $\nu(fg-h\Phi)\leq \nu(rr')=\nu(fg)$ and $\min (\nu(fg),\nu(
h\Phi))\leq \nu(fg-h\Phi)\leq \nu(fg)$. So, $\nu(fg-h\Phi)=\min (\nu(fg),\nu(h\Phi))$. 
Hence $\Phi$ is $\nu$-irreducible. \\ 
\indent 1) $\Rightarrow$ 2). We assume that $\Phi$ is monic, $\nu$-minimal and 
$\nu$-irreducible. Let $f$, $g$ in $K_{d-1}[\rchi]$ 
 with ${\rm deg}(f)+{\rm deg}(g)<[K[\rchi]\!:\! K]$. Since $\Phi$ is 
$\nu$-minimal, for every $h$ in $K_{d'-1}[\rchi]$ we have $\nu(f+h\Phi)=\min(\nu(f),\nu(h\Phi))$ and 
$\nu(g+h\Phi)=\min(\nu(g),\nu(h\Phi))$. Now, $\Phi$ is $\nu$-irreducible, hence 
$\nu(fg+h\Phi)=\min(\nu(fg),\nu(h\Phi))$. \\
\indent 2) $\Rightarrow$ 3) has been proved in Lemma \ref{lmequivdefs}. \\ 
\indent 3) $\Rightarrow$ 2). We take $f$, $g$ in $K_{d-1}[\rchi]$ and $h$ in $K_{d'-1}[\rchi]$ 
with ${\rm deg}(f)+{\rm deg}(g)<[K[\rchi]\!:\! K]$. The 
polynomial $h$ can be written as $h=h_m\Phi^m+\cdots+h_\Phi+h_0$, where 
$h_m,\dots, h_1,h_0$ belong to $K_{d-1}[\rchi]$. Let $q:=q(f,g)$ and $r:=r(f,g)$; 
since $f$ and $g$ belong to $K_{d-1}[\rchi]$, we have deg$(q)<d$, i.e.\ 
$q\in K_{d-1}[\rchi]$. We have: $\nu(fg+h\Phi)=\nu(h_m\Phi^{m+1}+\cdots+ h_1\Phi^2+
(q+h_0)\Phi+r)=\min(\nu(h_n\Phi^{m+1}),\dots,\nu(h_1\Phi^2),\nu((q+h_0)\Phi),\nu(r))$. 
Since $\Phi$ is a w key polynomial, we have $\nu(fg)=\nu(r)\leq\nu(q\Phi)$. If 
$\nu(h_0\Phi)\geq \nu(r)$, then $\nu((q+h_0)\Phi)\geq \nu(r)$. Hence 
$\min (\nu((q+h_0)\Phi),\nu(r))=\nu(r)=\min (\nu(h_0\Phi),\nu(r))$. 
If $\nu(h_0\Phi)<\nu(r)$, then $\min (\nu((q+h_0)\Phi),\nu(r))=\nu(h_0\Phi)=
\min (\nu(h_0\Phi),\nu(r))$. 
Therefore: 
$\nu(fg+h\Phi)=\min(\nu(h_m\Phi^{m+1}),\dots,\nu(h_1\Phi^2),\nu(h_0\Phi),\nu(fg))=
\min(\nu(h\Phi),\nu(fg))$. 
\end{proof}
\indent 
We use the hypothesis ``$\nu$ is a pm valuation'' for proving 
2) $\Rightarrow$ 1). For proving 2) $\Rightarrow$ 3) the condition 
``for every $f\in K_{d'-1}[\rchi]$, $\nu(\Phi f)=\nu(\Phi)+\nu(f)$'' is 
sufficient. The remainder of the proof remains true with a $K$-module valuation. \\[2mm] 
\indent Proposition \ref{equivdefs} shows that the definition of MacLane is 
stronger than the definition of w key polynomials. Now, we extend the definition of S.\ MacLane 
to $K$-module valuations. 
\begin{definition} Let $\Phi$ be a polynomial of degree $d$. We say that $\Phi$ is a 
{\it ML key polynomial for} $\nu$ if $\Phi$ is a w key polynomial such that the sequence 
$(\Phi^m)$ ($md<[K(\rchi)\!:\! K]$) 
is separated over $K_{d-1}[\rchi]$.
\end{definition}
\indent If $\nu$ is a pm valuation, then it follows from Proposition \ref{avprop32} 
that $\Phi$ is a ML key polynomial for $\nu$ if, and only if, $\Phi$ is a w key polynomial for $\nu$ 
and $\nu=\nu_{\Phi}$. This shows the difference between w key polynomials and MacLane key polynomials. 
\section{Properties of key degrees}\label{section4} 
\indent In this section, we list some properties of key degrees. 
In particular, we give sufficient conditions for existing a finite number of key degrees and 
a characterization of the successor of 
a key degree. This characterization takes place in the construction of complete families of key polynomials. 
In this construction, in the case of an independent key degree, we will choose monic polynomials with 
maximal valuation. However, 
the fact that the degree $d$ of a key polynomial is independent does not imply that its valuation is maximal 
in $\nu(\rchi^d-K_{d-1}[\rchi])$. Indeed, 
we saw in Proposition \ref{rks33} 1) that every monic polynomial of degree $1$ is a key polynomial. 
This holds whether $1$ is an independent key degree or not. \\
\indent Except for Proposition\ref{rk322}, all properties of this section remain true if $\nu$ is a 
pseudo-valuation (and $\rchi$ is transcendental).   
In this case we assume that the degrees are lower than 
the degree of a generator of the ideal $\{f\in K[\rchi]\; :\; \nu(f)=\infty\}$ (in the settings, we can take 
$d<[K[\rchi]\!:\!K]$ in place of $d<{\rm deg}(\Pi)$). 
\subsection{Independent key degrees}\label{subsection35} 
We start this subsection with some observations about 
monic polynomials with maximal valuations. 
\begin{proposition}\label{aplm318} 
Let $d$ be an integer and $\nu$ be a pm valuation on $K[\rchi]$. \\ 
1) Let $\Phi$ be a w key polynomial of degree $d$. Then every 
monic polynomial $\Phi'$ 
of degree $d$, such that $\nu(\Phi')>\nu(\Phi)$, is a key polynomial. Therefore, if $d$ is not a key degree, 
then $\nu(\Phi)$ is maximal in $\nu(\rchi^d-K_{d-1}[\rchi])$. \\ 
2) Assume that $d$ is an independent key degree and let $\Phi$ be a key polynomial of degree $d$. 
Then every monic 
polynomial $\Phi'$ of degree $d$, such that $\nu(\Phi')\geq \nu(\Phi)$, is a key polynomial. 
In particular, any monic polynomial $\Phi'$ of degree $d$, such that $\nu(\Phi')$ is maximal in 
$\nu(\rchi^d-K_{d-1}[\rchi])$, is a key polynomial. 
\end{proposition}
\indent 
This proposition is a consequence of the following lemma. 
\begin{lemma}\label{lm318} 
Let $d$ be a positive integer, $\nu$ be a pm valuation on $K[\rchi]$, 
$\Phi$ and $\Phi'$ be monic polynomials of degree $d$ such that $\Phi$ is a w key polynomial for 
$\nu$. \\
1) If $\nu(\Phi')>\nu(\Phi)$, then $\Phi'$ is a w key polynomial for $\nu_{\Phi}$, 
a key polynomial for $\nu$, and $\nu_{\Phi}\leq \nu_{\Phi'}$. \\ 
2) If $\nu(\Phi')=\nu(\Phi)$ and $\Phi$ is a key polynomial for $\nu$, then 
$\Phi'$ is a key polynomial for both of $\nu$ and 
$\nu_{\Phi}$. Furthermore, $\nu_{\Phi}=\nu_{\Phi'}$.\\
3) If $\nu(\Phi')=\nu(\Phi)$ and $\Phi$, $\Phi'$ are 
w key polynomials for $\nu$, then $\nu_{\Phi}=\nu_{\Phi'}$. 
\end{lemma}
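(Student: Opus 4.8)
The plan is to argue everything through the augmented valuations $\nu_{\Phi}$ and $\nu_{\Phi'}$, invoking only the comparison tools already established: Remark~\ref{rekav312}~1), Remarks~\ref{rks33}~4), Proposition~\ref{avprop32}~2) and Proposition~\ref{prop32}. First I would record the routine facts giving the three cases a common core. Since $\Phi$ and $\Phi'$ are monic of the same degree $d$, we have $\Phi'-\Phi\in K_{d-1}[\rchi]$, the restrictions of $\nu_{\Phi}$ and of $\nu_{\Phi'}$ to $K_{d-1}[\rchi]$ both coincide with that of $\nu$, and reading off the $\Phi$-adic expansion $\Phi'=1\cdot\Phi+(\Phi'-\Phi)$ (and symmetrically for $\Phi$) gives $\nu_{\Phi}(\Phi')=\min(\nu(\Phi),\nu(\Phi'-\Phi))$ and $\nu_{\Phi'}(\Phi)=\min(\nu(\Phi'),\nu(\Phi'-\Phi))$. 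In every case of the lemma $\nu(\Phi')\geq\nu(\Phi)$, hence $\nu(\Phi'-\Phi)\geq\nu(\Phi)$, so $\nu_{\Phi}(\Phi')=\nu_{\Phi'}(\Phi)=\nu(\Phi)$; and if moreover $\nu(\Phi')>\nu(\Phi)$ then $\nu(\Phi'-\Phi)=\nu(\Phi)$. (If $d=1$ the ``strict key polynomial'' assertions are immediate from Remarks~\ref{rks33}~1), and the comparisons of $\nu_{\Phi}$ and $\nu_{\Phi'}$ below do not use $d\geq2$.)

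For 1), assume $\nu(\Phi')>\nu(\Phi)$. By Proposition~\ref{avprop32}~2), $\nu_{\Phi}$ is a p-m valuation on $K[\rchi]$; and $\nu$ is a p-m valuation on $K[\rchi]$ with the same restriction to $K_{d-1}[\rchi]$ and with $\nu(\Phi')>\nu(\Phi)=\nu_{\Phi}(\Phi')$, so by Proposition~\ref{prop32}, applied with base valuation $\nu_{\Phi}$ and polynomial $\Phi'$, the polynomial $\Phi'$ is a key polynomial for $\nu_{\Phi}$. By Remarks~\ref{rks33}~4) applied to $\nu_{\Phi}$, the value $\nu_{\Phi}(\Phi')$ is an upper bound of $\{\nu_{\Phi}(r_{\Phi'}(f,g))-\nu_{\Phi}(q_{\Phi'}(f,g))\}$; since $q_{\Phi'}(f,g),r_{\Phi'}(f,g)$ lie in $K_{d-1}[\rchi]$, this set equals $\{\nu(r_{\Phi'}(f,g))-\nu(q_{\Phi'}(f,g))\}$, so $\nu(\Phi)$ bounds it and then $\nu(\Phi')>\nu(\Phi)$ bounds it strictly; by Remarks~\ref{rks33}~4) this says precisely that $\Phi'$ is a strict key polynomial for $\nu$ (in particular irreducible, so $\nu_{\Phi'}$ is defined). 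Finally $\nu_{\Phi'}$ has the same restriction to $K_{d-1}[\rchi]$ as $\nu_{\Phi}$ and $\nu_{\Phi'}(\Phi)=\nu(\Phi)=\nu_{\Phi}(\Phi)$, so Remark~\ref{rekav312}~1) yields $\nu_{\Phi}\leq\nu_{\Phi'}$.

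For 2) and 3) assume $\nu(\Phi')=\nu(\Phi)$. I would first prove the strictness part of 2) by comparing euclidean divisions. Fix $f,g\in K_{d-1}[\rchi]$ with $\deg(f)+\deg(g)<[K[\rchi]\!:\!K]$; we may assume $q:=q_{\Phi'}(f,g)\neq0$. Write $fg=q\Phi'+r$ with $r=r_{\Phi'}(f,g)$, and divide $q(\Phi'-\Phi)$ by $\Phi$ as $q(\Phi'-\Phi)=Q\Phi+R$ (both factors lie in $K_{d-1}[\rchi]$, with admissible degree sum since $\deg q\leq\deg f+\deg g-d$). Substituting $\Phi'=\Phi+(\Phi'-\Phi)$ and using uniqueness of euclidean division by $\Phi$ gives $q_{\Phi}(f,g)=q+Q$ and $r_{\Phi}(f,g)=R+r$. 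As $\Phi$ is a strict key polynomial for $\nu$: $\nu(R)=\nu(q)+\nu(\Phi'-\Phi)\geq\nu(q)+\nu(\Phi)=\nu(q\Phi')$ and $\nu(Q)>\nu(R)-\nu(\Phi)\geq\nu(q)$, hence $\nu(q_{\Phi}(f,g))=\nu(q)$; applying strictness again, $\nu(fg)=\nu(r_{\Phi}(f,g))<\nu(q_{\Phi}(f,g)\Phi)=\nu(q\Phi')\leq\nu(R)$, so $\nu(r)=\nu(r_{\Phi}(f,g)-R)=\nu(fg)$. Thus $\nu(fg)=\nu(r_{\Phi'}(f,g))<\nu(q_{\Phi'}(f,g)\Phi')$, i.e.\ $\Phi'$ is a strict key polynomial for $\nu$. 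Now $\nu_{\Phi'}$ is defined; since $\nu_{\Phi}$ and $\nu_{\Phi'}$ agree on $K_{d-1}[\rchi]$, $\nu_{\Phi'}(\Phi)=\nu(\Phi)=\nu_{\Phi}(\Phi)$ and $\nu_{\Phi}(\Phi')=\nu(\Phi')=\nu_{\Phi'}(\Phi')$, Remark~\ref{rekav312}~1) applied in both directions gives $\nu_{\Phi}=\nu_{\Phi'}$ — the last assertion of 2), and also 3) (whose proof uses only $\nu(\Phi)=\nu(\Phi')$ and $\deg\Phi=\deg\Phi'$). Finally, $\Phi'$ being a strict key polynomial for $\nu$, Remarks~\ref{rks33}~4) show that $\nu(\Phi')$ strictly bounds $\{\nu(r_{\Phi'}(f,g))-\nu(q_{\Phi'}(f,g))\}$, so Proposition~\ref{avprop32}~2)b) (with polynomial $\Phi'$, base valuation $\nu$, $\gamma=\nu(\Phi')$) shows $\Phi'$ is a strict key polynomial for $\nu_{\Phi'}=\nu_{\Phi}$.

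The step I expect to cost the most care is the euclidean-division bookkeeping of the previous paragraph: one must keep every quotient and remainder inside $K_{d-1}[\rchi]$ and verify that the degree-sum conditions needed to invoke the (strict) key polynomial property of $\Phi$ and the partial multiplicativity of $\nu$ are met — which they are, thanks to $\deg q_{\Phi'}(f,g)\leq\deg f+\deg g-d$. Everything else reduces, once the preliminary identities for $\nu_{\Phi}(\Phi')$ and $\nu_{\Phi'}(\Phi)$ are in place, to a direct assembly of Remarks~\ref{rekav312}~1) and \ref{rks33}~4) together with Propositions~\ref{avprop32} and \ref{prop32}.
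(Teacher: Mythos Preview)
Your proof is correct and follows essentially the same architecture as the paper's: set $h=\Phi'-\Phi\in K_{d-1}[\rchi]$, compute $\nu_{\Phi}(\Phi')$ and $\nu_{\Phi'}(\Phi)$, do the euclidean-division bookkeeping for part~2) (your $Q,R$ are the paper's $q_1,r_1$), and use Remark~\ref{rekav312}~1) for the comparisons $\nu_{\Phi}\lessgtr\nu_{\Phi'}$. The only substantive differences are packaging: in part~1) you obtain ``$\Phi'$ key for $\nu_{\Phi}$'' via Proposition~\ref{prop32} and then ``strict for $\nu$'' via Remarks~\ref{rks33}~4), whereas the paper argues both by a direct contradiction on $\nu_{\Phi}(q'\Phi')$ versus $\nu_{\Phi}(r')$; in part~2) you derive ``$\Phi'$ strict for $\nu_{\Phi}$'' from Proposition~\ref{avprop32}~2)b) (using $\nu_{\Phi'}=\nu_{\Phi}$), while the paper verifies the strict inequality $\nu_{\Phi}(q'\Phi')>\nu_{\Phi}(r')$ by hand. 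Your route is slightly more modular and the paper's slightly more self-contained, but neither gains any real economy over the other.
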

\begin{proof} Set 
$h=\Phi'-\Phi$. Assume that $\nu(\Phi')\geq\nu(\Phi)$. Then we have $\nu(h)\geq\nu(\Phi)$. Furthermore, 
$\Phi$ and $\Phi'$ are monic polynomials, so the degree of $h$ is lower than $d$. 
Now, $\Phi'=\Phi+h$, hence 
$\nu_{\Phi}(\Phi')=\min (\nu(\Phi),\nu(h))=\nu(\Phi)$. Let $f$, $g$ in $K_{d-1}[\rchi]$ with 
${\rm deg}(f)+{\rm deg}(g)<[K[\rchi]\!:\! K]$. 
We have: $\nu_{\Phi}(fg)=\nu_{\Phi}(f)+
\nu_{\Phi}(g)=\nu(f)+\nu(g)=\nu(fg)$. Let $r'=r_{\Phi'}(f,g)$, $q'=q_{\Phi'}(f,g)$. \\ 
\indent 1) Since $\nu(\Phi')>\nu(\Phi)$, we have $\nu(h)=\nu(\Phi)$ 
($=\nu_{\Phi}(\Phi')$). In order to get a contradiction, assume that 
$\nu_{\Phi}(q'\Phi')<\nu_{\Phi}(r')\;(=\nu(r'))$.  
Then $\nu_{\Phi}(fg)=\nu_{\Phi}(q'\Phi'+r')=\nu_{\Phi}(q'\Phi')=\nu(q'\Phi)=\nu_{\Phi}(q'\Phi)$. Now, 
$\nu(q'\Phi')>\nu(q'\Phi)=\nu(fg)$ and $\nu(r')>\nu_{\Phi}(q'\Phi')=\nu_{\Phi}(fg)=\nu(fg)$. 
Hence $\nu(fg)\geq \min (\nu(q'\Phi'),\nu(r'))>\nu(fg)$: a contradiction. Therefore, 
$\nu_{\Phi}(q'\Phi')\geq \nu_{\Phi}(r')$. Now, $\nu(r')=\nu_{\Phi}(r')$ and $\nu_{\Phi}(q'\Phi')=
\nu(q'\Phi)<\nu(q'\Phi')$. Hence $\Phi'$ is a key polynomial for $\nu$ (so $\nu(fg)=\nu(r')$). Since 
$\nu_{\Phi}(fg)=\nu(fg)=\nu(r')=\nu_{\Phi}(r')$, $\Phi'$ is a w key polynomial for 
$\nu_{\Phi}$. Furthermore, 
$\nu_{\Phi}$ and $\nu_{\Phi'}$ are equal on $K_{d-1}[\rchi]$, and $\nu_{\Phi'}(\Phi)=
\nu_{\Phi'}(\Phi'-h)=\min(\nu_{\Phi'}(\Phi'),\nu_{\Phi'}(h))=
\min(\nu(\Phi'),\nu(h))=\nu(h)=\nu(\Phi)=\nu_{\Phi}(\Phi)$. 
By Remark \ref{rekav312}, we have $\nu_{\Phi}\leq \nu_{\Phi'}$. \\ 
\indent 2) We assume that $\nu(\Phi')=\nu(\Phi)$ and $\Phi$ is a key polynomial for $\nu$. 
Let $q_1=q_{\Phi}(q',h)$ and $r_1=r_{\Phi}(q',h)$. 
Since $\Phi$ is a key 
polynomial for $\nu$, we have $\nu(q_1\Phi)>\nu(r_1)=\nu(q'h)\geq \nu(q'\Phi)$. Hence 
$\nu(q')<\nu(q_1)$ and $\nu(q'+q_1)=\nu(q')$. We have: 
$fg=q'\Phi'+r'=q'\Phi+q'h+r'=(q'+q_1)\Phi+r'+r_1$, hence $q'+q_1=q_{\Phi}(f,g)$, $r'+r_1=r_{\Phi}(f,g)$ and 
$\nu(fg)=\nu(r'+r_1)<\nu((q'+q_1)\Phi)=\nu(q'\Phi)\leq
\nu(r_1)$. It follows: $\nu(r'+r_1)=\nu(r')$, so $\nu(fg)=\nu(r')<\nu(q'\Phi)=\nu(q'\Phi')$. This 
proves that $\Phi'$ is a key polynomial for $\nu$. Now, $\nu_{\Phi}(fg)=\nu(fg)=
\nu(r')=\nu_{\Phi}(r')$ and $\nu_{\Phi}(q'\Phi')=\nu_{\Phi}(q'\Phi)=\nu(q'\Phi)>\nu(r')$. 
Hence $\Phi'$ is a key polynomial for $\nu_{\Phi}$. In the same way as in 1), we have: 
$\nu_{\Phi}\leq\nu_{\Phi'}$. Now, since $\Phi'$ is a key polynomial, we have in 
a symmetric way: $\nu_{\Phi'}\leq\nu_{\Phi}$.\\
\indent 3) We assume that $\nu(\Phi')=\nu(\Phi)$ and $\Phi$, $\Phi'$ are 
w key polynomials for $\nu$. 
We have: $\Phi'=\Phi+h$, with $\nu(h)\geq \nu(\Phi)$ and 
deg$(h)<d$. Hence $\nu_{\Phi}(\Phi')=\min(\nu(\Phi),\nu(h))=
\nu(\Phi)=\nu(\Phi')=\nu_{\Phi'}(\Phi')$. So by 
Remark \ref{rekav312} we have: $\nu_{\Phi'}\leq \nu_{\Phi}$. In the same way, 
$\nu_{\Phi}\leq \nu_{\Phi'}$, hence $\nu_{\Phi'}= \nu_{\Phi}$. 
\end{proof}
\indent  The following proposition gives a characterization of valuational key degrees by means of $\nu K_{d-1}[\rchi]$. 
\begin{proposition}\label{avpropdivb} Let $\nu$ be a pm valuation on $K[\rchi]$ and $d$ be a positive integer 
such that $1\leq d\leq [K(\rchi)\!:\!K]/2$. Then, $d$ is a valuational key degree if, and only if, 
$\nu K_{d-1}[\rchi]$ is a group and $\nu K_d[\rchi]\neq \nu K_{d-1}[\rchi]$. 
If this holds, then every monic polynomial $\Phi$ of degree $d$ 
such that $\nu(\Phi)$ is the maximum of $\nu(\rchi^d-K_{d-1}[\rchi])$ 
(which is equivalent to saying that $\nu(\Phi)\notin \nu K_{d-1}[\rchi]$) 
is a key polynomial. 
\end{proposition}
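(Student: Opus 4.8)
The plan is to establish the two implications of the equivalence and to read off the extra assertions along the way. For the forward direction, suppose $d$ is a valuational key degree. By definition $d$ is then a key degree, so some key polynomial of degree $d$ exists, and since $1\le d\le [K(\rchi)\!:\!K]/2$, Corollary~\ref{cor39} shows that $\nu K_{d-1}[\rchi]$ is a subgroup of $\nu K(\rchi)$. Again by definition, $\nu(\rchi^d-K_{d-1}[\rchi])$ has a maximum of the form $\nu(\Phi)$ with $\nu(\Phi)\notin \nu K_{d-1}[\rchi]$; as $\Phi$ is a nonzero element of $K_d[\rchi]$, this forces $\nu(\Phi)\in \nu K_d[\rchi]$ with $\nu(\Phi)\notin\nu K_{d-1}[\rchi]$, so $\nu K_d[\rchi]\neq \nu K_{d-1}[\rchi]$. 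This settles the forward direction.

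For the converse, assume $\nu K_{d-1}[\rchi]$ is a group and $\nu K_d[\rchi]\neq \nu K_{d-1}[\rchi]$. First I would produce a monic representative: choose $h\in K_d[\rchi]$ with $\nu(h)\notin \nu K_{d-1}[\rchi]$ (necessarily $\deg h=d$), and divide by its leading coefficient, whose valuation lies in $\nu K\subseteq \nu K_{d-1}[\rchi]$; since $\nu K_{d-1}[\rchi]$ is a group the resulting monic polynomial $\Phi$ of degree $d$ still satisfies $\nu(\Phi)\notin \nu K_{d-1}[\rchi]$. Next, for any monic $\Phi'$ of degree $d$ one has $\Phi-\Phi'\in K_{d-1}[\rchi]$, hence $\nu(\Phi-\Phi')\in \nu K_{d-1}[\rchi]\cup\{\infty\}$, so $\nu(\Phi-\Phi')\neq\nu(\Phi)$ and $\nu(\Phi')=\min(\nu(\Phi),\nu(\Phi-\Phi'))\le\nu(\Phi)$. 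Thus $\nu(\Phi)=\max\nu(\rchi^d-K_{d-1}[\rchi])$. The same computation yields the parenthetical equivalence: if $\nu(\Psi)=\max\nu(\rchi^d-K_{d-1}[\rchi])$ for some monic $\Psi$ of degree $d$, comparing $\Psi$ with $\Phi$ gives $\nu(\Psi)=\nu(\Phi)\notin\nu K_{d-1}[\rchi]$, so a monic degree-$d$ polynomial realizes the maximum if and only if its valuation avoids $\nu K_{d-1}[\rchi]$.

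It remains to show that every monic $\Phi$ of degree $d$ with $\nu(\Phi)\notin\nu K_{d-1}[\rchi]$ is a strict key polynomial; this is the heart of the matter. Let $f,g$ be nonzero in $K_{d-1}[\rchi]$ (the case $fg=0$ is degenerate) and write $fg=q\Phi+r$ with $q=q(f,g)$, $r=r(f,g)$. From $\deg(fg)\le 2d-2$ and $\deg r<d$ one gets $\deg q\le d-2$, so $q,r\in K_{d-1}[\rchi]$ and hence $\nu(q),\nu(r)\in \nu K_{d-1}[\rchi]\cup\{\infty\}$; moreover $\nu(fg)=\nu(f)+\nu(g)$ because $\deg f+\deg g\le 2d-2<[K(\rchi)\!:\!K]$ and $\nu$ is partially multiplicative, and this sum lies in the group $\nu K_{d-1}[\rchi]$. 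If $q=0$ then $\nu(fg)=\nu(r)<\infty=\nu(q\Phi)$. If $q\neq 0$ then $\nu(q\Phi)=\nu(q)+\nu(\Phi)\notin\nu K_{d-1}[\rchi]$, since subtracting $\nu(q)$ from an element of the group would place $\nu(\Phi)$ in the group; in particular $\nu(q\Phi)\neq\nu(fg)$, so $r\neq 0$, $\nu(q\Phi)\neq\nu(r)$, and $\nu(fg)=\min(\nu(q\Phi),\nu(r))$, which, as $\nu(fg)$ lies in $\nu K_{d-1}[\rchi]$ and $\nu(q\Phi)$ does not, must equal $\nu(r)$; hence $\nu(fg)=\nu(r)<\nu(q\Phi)$. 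Thus $\Phi$ is a strict key polynomial, so $d$ is a strict key degree (in particular a key degree), and combined with the previous paragraphs $d$ is a valuational key degree. The main obstacle is purely one of bookkeeping in this last step — keeping track of which of the quantities $\nu(fg),\nu(q\Phi),\nu(r)$ do or do not lie in $\nu K_{d-1}[\rchi]$ — but once the degree bound $\deg q\le d-2$ and partial multiplicativity (the latter relying on $d\le[K(\rchi)\!:\!K]/2$) are in hand, the conclusion is forced.
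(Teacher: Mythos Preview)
Your proof is correct and follows essentially the same route as the paper: the forward direction via the definition and Corollary~\ref{cor39}, the normalization to a monic $\Phi$ with $\nu(\Phi)\notin\nu K_{d-1}[\rchi]$, the maximality of $\nu(\Phi)$ via $\nu(\Phi-\Phi')\neq\nu(\Phi)$, and the strict key polynomial verification by tracking which of $\nu(fg),\nu(q\Phi),\nu(r)$ lie in the group $\nu K_{d-1}[\rchi]$. If anything, you are slightly more explicit than the paper in justifying $\nu(q\Phi)=\nu(q)+\nu(\Phi)$ via the degree bound $\deg q\le d-2$ and the hypothesis $d\le[K(\rchi)\!:\!K]/2$.
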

\begin{proof} $\Rightarrow$ follows from the definition and Corollary \ref{cor39}. 
Assume that $\nu K_{d-1}[\rchi]$ is a group and $\nu K_d[\rchi]\neq \nu K_{d-1}[\rchi]$.
By hypothesis, there is a polynomial $\Phi$ of degree $d$ such that 
$\nu(\Phi)\notin \nu K_{d-1}[\rchi]$. Since $\nu K_{d-1}[\rchi]$ is a group, by 
dividing $\Phi$ by an element of $K$ we can assume that $\Phi$ is a monic polynomial. 
Let $f\in K_{d-1}[\rchi]$. Then  
$\nu(f)\neq \nu(\Phi)$. Hence $\nu(\Phi-f)=\min (\nu(\Phi),\nu(f))\leq \nu(\Phi)$. 
Consequently, $\nu(\Phi)$ is the maximum of $\nu (\rchi^d-K_{d-1}[\rchi])$. \\
\indent 
Let $f$, $g$ in $K_{d-1}[\rchi]$, $q=q_{\Phi}(f,g)$ and $r=r_{\Phi}(f,g)$. Since 
$\nu(q\Phi)\notin\nu K_{d-1}[\rchi]$, we have that $\nu(q\Phi)\neq \nu(r)$. 
Hence $\nu(fg)=\min (\nu(q\Phi),\nu(r))$. Now, since $\nu K_{d-1}[\rchi]$ is a group, 
$\nu(fg)=\nu(f)+\nu(g)\in \nu K_{d-1}[\rchi]$. 
Hence $\nu(fg)\neq \nu(q\Phi)$. So, $\nu(q\Phi)>\nu(r)$. This proves that 
$\Phi$ is a key polynomial. 
Since $\nu(\phi)$ is the maximum of $\nu (\rchi^{d}-K_{d-1}[\rchi])$, $d$ is an independent 
key degree. Therefore it is a valuational key degree. 
\end{proof}
\begin{corollary}\label{avpropdivc} Let $\nu$ be a pm valuation on $K[\rchi]$ and $d$ be a positive integer. 
Assume that $\nu K_{d-1}[\rchi]$ is a group and let $d'$ be the smallest integer such that 
$\nu K_{d'}[\rchi]\neq \nu K_{d-1}[\rchi]$ (if any). 
Then $d'$ is a valuational key degree. In particular, the smallest degree $d$ such that 
$\nu K_d[\rchi]\neq \nu K$ (if any) is a valuational key degree.  
\end{corollary}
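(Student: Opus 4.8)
The plan is to reduce the statement to Proposition \ref{avpropdivb}. Reading $d'$ as the least degree above $d-1$ at which the value group grows, the first thing I would do is extract from minimality the two hypotheses of that proposition: $\nu K_{d'-1}[\rchi] = \nu K_{d-1}[\rchi]$, which is a group by assumption, and $\nu K_{d'}[\rchi] \neq \nu K_{d'-1}[\rchi]$ (since $\nu K_{d'}[\rchi]\neq \nu K_{d-1}[\rchi]=\nu K_{d'-1}[\rchi]$ by the very choice of $d'$). Proposition \ref{avpropdivb}, applied with $d'$ in the role of $d$, then gives immediately that $d'$ is a valuational key degree, that it is moreover a strict key degree, and that any monic $\Phi$ of degree $d'$ with $\nu(\Phi) = \max\nu(\rchi^{d'}-K_{d'-1}[\rchi])$ --- equivalently $\nu(\Phi)\notin\nu K_{d'-1}[\rchi]$ --- is a strict key polynomial. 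That is the whole of the first assertion.

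For the ``in particular'' I would simply specialize to $d=1$: the restriction of $\nu$ to $K$ is a field valuation, so $\nu K = \nu K_0[\rchi]$ is a group and $K_{d-1}[\rchi]=K$, whence the least degree at which $\nu K_{d'}[\rchi]$ properly contains $\nu K$ is a valuational strict key degree by the first part.

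The only bookkeeping point that needs attention is the blanket hypothesis $1\le d\le [K(\rchi)\!:\!K]/2$ in Proposition \ref{avpropdivb}, here to be imposed on $d'$. When $\rchi$ is transcendental it is vacuous. When $\rchi$ is algebraic, the relevant direction of Proposition \ref{avpropdivb} (the ``$\Leftarrow$'') only ever performs euclidean division, by $\Phi$, of products $fg$ with $f,g\in K_{d'-1}[\rchi]$ and $\deg f+\deg g<[K[\rchi]\!:\!K]$, so that argument goes through for $d'$ unchanged; alternatively, the remark following Theorem \ref{propdiv} --- every separate strict key degree is $\le[K[\rchi]\!:\!K]/2$ --- shows a posteriori that $d'$ lies in the admissible range. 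Apart from this, the corollary is a direct unwinding of Proposition \ref{avpropdivb}, so there is no genuine obstacle; the subtlest step is really just noticing that minimality of $d'$ pins $\nu K_{d'-1}[\rchi]$ to the group $\nu K_{d-1}[\rchi]$.
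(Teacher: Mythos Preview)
Your argument is correct and is exactly the deduction the paper intends: the corollary is stated without proof immediately after Proposition~\ref{avpropdivb}, and your reduction --- using minimality of $d'$ to get $\nu K_{d'-1}[\rchi]=\nu K_{d-1}[\rchi]$ and then invoking Proposition~\ref{avpropdivb} with $d'$ in place of $d$ --- is precisely what is implicit there. Your remark on the range hypothesis $d'\le [K(\rchi)\!:\!K]/2$ is a welcome clarification, and your observation that the $\Leftarrow$ direction of Proposition~\ref{avpropdivb} only uses the p-m property on products $fg$ with $\deg f+\deg g<[K[\rchi]\!:\!K]$ is accurate.
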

\subsection{Maximal key degrees}\label{subsect42} 
If $\rchi$ is algebraic over $K$, then there is a finite number of key degrees. We saw that if 
$\nu$ is a pseudo-valuation, then the degrees of the key polynomials are bounded by the degree of a 
generator of the ideal of elements with infinite valuation. 
We give sufficient conditions for a key degree being the maximal one. 
We start with a lemma. 
\begin{lemma}\label{laplm318} Let $\nu$ be a pm valuation on $K[\rchi]$ and $\Phi$ be 
a monic polynomial of degree $d$. Then, $\nu=\nu_{\Phi}$ on 
$K_d[\rchi]$ if, and only if, $\nu(\Phi)=\max \nu(\rchi^d-K_{d-1}[\rchi])$. \end{lemma}
\begin{proof} By the definition of $\nu_{\Phi}$, $\nu$ and $\nu_{\Phi}$ are equal on $K_{d-1}[\rchi]$. 
Hence, we can 
consider polynomials of degree $d$. So, we let $f$ be a polynomial of degree $d$. 
Without loss of generality we can assume that $f$ is a monic polynomial. Hence $f-\Phi$ 
has degree less than $\Phi$. 
Assume that $\nu(f)=\nu_{\Phi}(f)$. Then $\nu(f)=\nu_{\Phi}(\Phi+f-\Phi)=
\min (\nu(\Phi),\nu(f-\Phi))\leq \nu(\Phi)$. This proves that $\nu(\Phi)$ is the 
maximum of $\nu(\rchi^d-K_{d-1}[\rchi])$. Conversely, assume that 
$\nu(\Phi)=\max \nu(\rchi^d-K_{d-1}[\rchi])$. We have $\nu(f-\Phi)\geq\min (\nu(f), 
\nu(\Phi))=\nu(f)$. Therefore, $\nu(f)\geq \nu_{\Phi}(f)=\nu_{\Phi}(\Phi+f-\Phi)=
\min(\nu(\Phi),\nu(f-\Phi))\geq \nu(f)$. Hence $\nu_{\Phi}(f)=\nu(f)$.
\end{proof}
\begin{remark}\label{prop319}
Let $d$ be a positive integer, $\nu$ be a pm valuation on $K[\rchi]$. \\ 
1) It follows from Proposition \ref{aplem33} that if $\nu=\nu_{\Phi}$ for some w key polynomial $\Phi$, 
then there is no key degree greater than $d$. \\
2) Assume that $d$ is an independent key degree such that there is no key degree greater than $d$, and 
$\Phi$ is a key polynomial of degree $d$, with $\nu(\Phi)=\max (\nu(\rchi^d-K_{d-1}[\rchi]))$. 
By Lemma \ref{laplm318}, $\nu$ and $\nu_{\Phi}$ coincide on 
$K_d[\rchi]$. Now, by Proposition \ref{aplem33}, $\nu=\nu_{\Phi}$ on $K[\rchi]$. \end{remark}
\begin{proposition}\label{rk322} Let $\nu$ be a multiplicative valuation on $K(\rchi)$. 
If the rational rank of $\nu K(\rchi)$ over $\nu K$ is $1$, or the transcendence 
degree of $K(\rchi)_{\nu}$ over $K_{\nu}$ is $1$ (in other words, if $\rchi$ is transcendental and 
Abhyankar's inequality is an 
equality), then there is a finite number of  key degrees, 
and $\nu=\nu_{\Phi}$ for some w key polynomial $\Phi$. 
\end{proposition} 
\begin{proof} Abhyankar's inequality states that 
the transcendence degree of $K(\rchi)|K$ is at least equal to the sum of the transcendence degree of 
$(K(\rchi))_{\nu}|K_{\nu}$ and the rational rank of $\nu(K[\rchi])|\nu K$. 
Assume that the rational rank of $\nu K(\rchi)$ over $\nu K$ is $1$. 
Let $d$ be the smallest   integer such that there exists a monic polynomial $\Phi$ of degree $d$ such that 
$\nu(\Phi)$ is not rational over $\nu K$. If $d=1$, then we know that $\Phi$ is a 
key polynomial. Otherwise, clearly, $\nu(\Phi)\neq \nu(\rchi^d)=d\nu(\rchi)$. If 
$\nu(\Phi)< \nu(\rchi^d)$, then $\nu(\rchi^d-\Phi)=\nu(\Phi)$, with deg$(\rchi^d-\Phi)<d$: a contradiction 
(since $d$ is minimal). Hence $\nu(\Phi)>\nu(\rchi^d)$. Then $\nu(\Phi)=\max \nu(\rchi^d-K_{d-1}[\rchi])$. 
Let $f$, $g$ in $K_{d-1}[\rchi]$ and $q=q(f,g)$, $r=r(f,g)$. Since $d$ is minimal, we have:  
$\nu(r)\neq \nu(q\Phi)$  and $\nu(q\Phi)\neq \nu(f)+\nu(g)=\nu(fg)$. It follows that 
$\nu(fg)=\nu(r)<\nu(q\Phi)$ (the other cases lead to a contradiction). So $\Phi$ is a key 
polynomial for $\nu$. Now, the elements $\nu(\Phi^k)=k\nu(\Phi)$ are pairwise non-congruent 
modulo $\nu K_{d-1}[\rchi]$. In the same way as in Proposition \ref{BauranglaisS5}, this 
implies that the family $(\Phi^k)$ is separated over $K_{d-1}[\rchi]$. Therefore, 
$\nu=\nu_{\Phi}$. Now, by 1) of Remark \ref{prop319}, if $d'>d$, then $d$ is not a 
key degree. \\
\indent Now, assume that the transcendence degree of $K(\rchi)_{\nu}$ over $K_{\nu}$ is $1$. 
Let $\Phi$ be a polynomial such that $\nu(\Phi)=0$. If $in_{\nu}(\Phi)$ is algebraic over $G_{\nu}(K)$, then 
by Lemma \ref{irrpol} $in_{\nu}(\Phi)$ is algebraic over $K_{\nu}$ (the converse is trivial). 
Consequently, $in_{\nu}(\Phi)$ is transcendental over $K_{\nu}$ if, and only if, it transcendental over 
$G_{\nu}(K)$.  In particular, there is 
$\Phi$ in $K[\rchi]$ such that $in_{\nu}(\Phi)$ is transcendental over $G_{\nu}(K)$. 
Let $d$ be the smallest integer such that there exists a polynomial $\Phi$ of degree $d$ 
such that $in_{\nu}(\Phi)$ is transcendental over $G_{\nu}(K)$. Without loss of generality we can 
assume that $\Phi$ is a monic polynomial. 
Let $f$, $g$ in 
$K_{d-1}[\rchi]$, $q=q_{\Phi}(f,g)$, $r=r_{\Phi}(f,g)$. We have: 
$\nu(r)>\nu(fg)=\nu(q\Phi)\Leftrightarrow \nu(fg-q\Phi)>\nu(fg)=\nu(q\Phi)
\Leftrightarrow 0=in_{\nu}(fg)-in_{\nu}(q\Phi)\Leftrightarrow in_{\nu}(fg)=in_{\nu}(q\Phi)$. 
Now, $in_{\nu}(fg)=in_{\nu}(f)in_{\nu}(g)$ is algebraic over $G_{\nu}(K)$. 
Hence $in_{\nu}(fg)\neq in_{\nu}(q\Phi)$, which is transcendental over $G_{\nu}(K)$. It follows that 
$\nu(r)\leq \min (\nu(fg),\nu(q\Phi))\leq \nu(fg)$. Now, $in_{\nu}(r)\neq in_{\nu}(q\Phi)$, hence in the 
same way we get $\nu(fg)\leq \min (\nu (q\Phi),\nu(r))\leq \nu(r)$. Consequently, $\nu(fg)=\nu(r)$. 
This proves that $\Phi$ is a w key polynomial. 
Let $f_0,\dots,f_n$ in $K_{d-1}[\rchi]\backslash \{0\}$. By hypothesis, $in_{\nu}(f_0), \dots,in_{\nu}(f_n)$  
are algebraic over $G_{\nu}(K)$. 
Hence $in_{\nu}(f_n)in_{\nu}(\Phi)^n+\cdots+in_{\nu}(f_0)\neq 0$. Without loss of generality we can assume that 
$in_{\nu}(f_0)= \cdots =in_{\nu}(f_n)$. 
It follows that $\nu(f_0+\cdots +f_n\Phi^n)=\min( \nu(f_0),\cdots ,\nu(f_n\Phi^n))$.
This proves that the sequence $(\Phi^n)$ is separated over $K_{d-1}[\rchi]$. Therefore, $\nu=\nu_{\Phi}$. 
By 1) of Remark \ref{prop319}, if $d'>d$, then $d'$ is not a key degree. 
\end{proof} 
\indent Now we turn to a particular case of immediate key degrees that we can call {\it dense} key degrees. It follows that 
if a key degree is dense, then it is the greatest key degree. 
\begin{proposition}\label{prop49} 
Let $d\geq 2$, $\nu'\leq\nu$ be pm valuations on $K[\rchi]$. 
Assume that $\rchi^d$ is limit over $(K_{d-1}[\rchi],\nu)$, that the restrictions of $\nu$ and 
$\nu'$ to $K_{d-1}[\rchi]$ are equal, and that there exists a monic polynomial $\Phi$ of 
degree $d$ such that $\nu'(\Phi)<\nu(\Phi)$. Let $(\Phi_i)$ be 
a sequence of monic polynomials of degree $d$ such that the sequence $(\nu(\Phi_i))$ is increasing 
and cofinal in $\nu K[\rchi]$, and $\nu(\Phi)<\nu(\Phi_i)$. Then. \\
1) $d$ is an immediate key degree for $\nu$, $\Phi$ is a w key polynomial for $\nu$ and $\nu'$, and 
$\nu'=\nu'_{\Phi}$. \\
2) The $\Phi_i$'s are key polynomials for $\nu$, and for every $f\in K[\rchi]$ the sequence 
$(\nu_{\Phi_i}(f))$ is eventually equal to $\nu(f)$. \\
3) The extension $(K[\rchi]|K_{d-1}[\rchi],\nu)$ is dense. \\
4) There is no key degree greater than $d$. 
\end{proposition}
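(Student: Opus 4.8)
The plan is to deduce everything from the earlier machinery, using the hypothesis that $\rchi^d$ is limit over $(K_{d-1}[\rchi],\nu)$ together with the existence of a degree-$d$ polynomial $\Phi$ with $\nu'(\Phi)<\nu(\Phi)$. First I would establish part 1). Since $\rchi^d$ is limit over $K_{d-1}[\rchi]$, the set $\nu(\rchi^d-K_{d-1}[\rchi])=\nu K_{d-1}[\rchi]$ has no maximal element (because $\nu K_{d-1}[\rchi]$ has no greatest element, as recalled at the start of Section \ref{section1}). So by Definition \ref{def32}, $d$ is an immediate key degree provided $d$ is a key degree; this last point follows from Lemma \ref{lem33}: since $\nu'\leq\nu$ coincide on $K_{d-1}[\rchi]$ and $\nu'(\Phi)<\nu(\Phi)$, Lemma \ref{lem33}.2 gives that $\Phi$ is a key polynomial for $\nu'$ and a strict key polynomial for $\nu$, and Lemma \ref{lem33}.3 gives $\nu'=\nu'_{\Phi}=\nu_{\Phi,\nu'(\Phi)}$. (Here I should first note that the restriction of $\Phi$ to be monic is harmless, and that $d\geq 2$ is used only to ensure we are above the trivial degree-$1$ case.)

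For part 2), I would argue that each $\Phi_i$ is a strict key polynomial for $\nu$ by applying Proposition \ref{aplm318}.1: $d$ is an immediate key degree, $\Phi$ is a key polynomial of degree $d$, and $\nu(\Phi_i)>\nu(\Phi)$, so $\Phi_i$ is a strict key polynomial. For the convergence statement I would mimic the proof of Theorem \ref{thm4xx}.1 but push it to \emph{all} of $K[\rchi]$: on $K_{d-1}[\rchi]$ all the $\nu_{\Phi_i}$ already agree with $\nu$; for a monic $f$ of degree exactly $d$, once $\nu(\Phi_i)>\nu(f)$ we get $\nu_{\Phi_i}(f)=\min(\nu(\Phi_i),\nu(f-\Phi_i))=\nu(f)$ since $\nu(f-\Phi_i)=\nu(f)$. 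The new ingredient compared to Theorem \ref{thm4xx} is to handle degree $\geq d$: writing $f=f_k\Phi_i^k+\cdots+f_0$ with $f_j\in K_{d-1}[\rchi]$, for $i$ large the valuations $\nu(f_j)+j\nu(\Phi_i)$ become pairwise distinct (because $(\nu(\Phi_i))$ is cofinal in $\nu K[\rchi]$ and the $\nu(f_j)$ are fixed), so $\nu_{\Phi_i}(f)=\min_j(\nu(f_j)+j\nu(\Phi_i))$ is carried by a single index; a separate argument — using that $\Phi_i$ is a strict key polynomial, hence the $\Phi_i$-adic expansion computes $\nu$ up to the $q\Phi_i$ correction terms which have strictly larger valuation — shows this common value equals $\nu(f)$. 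I expect \emph{this} to be the main obstacle: one must be careful that changing $i$ changes the expansion of $f$, so the cleanest route is to fix the expansion of $f$ with respect to $\Phi$ (not $\Phi_i$), write $\Phi_i=\Phi+h_i$ with $\nu(h_i)\geq\nu(\Phi)$ growing, and track how the cross terms behave — or, more slickly, invoke Theorem \ref{thm46}'s mechanism if permissible. I would instead prefer the elementary bookkeeping: for fixed $f$ of degree $n$, induct on $n$; the degree-$<d$ and degree-$=d$ cases are done, and for $d<n$ one reduces modulo $\Phi_i$ and applies the induction hypothesis to the quotient and remainder, exactly as in the proof of Theorem \ref{thm4xx}.3.

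For part 3), I would apply Proposition \ref{avD82p103} with $K_{d-1}[\rchi]$ in place of $K$: by hypothesis $\rchi^d$ is limit over $(K_{d-1}[\rchi],\nu)$, i.e.\ the generator $\rchi^d$ of the extension $K_{d-1}[\rchi]\subseteq K[\rchi]$ is a limit element — but I must be careful that $K[\rchi]$ need not be generated over $K_{d-1}[\rchi]$ by $\rchi^d$ alone. The correct statement to invoke is that $(K[\rchi]|K_{d-1}[\rchi],\nu)$ is dense iff every element is a limit over $K_{d-1}[\rchi]$, and one checks this using the Taylor-expansion argument of Proposition \ref{avD82p103}: for $f\in K[\rchi]$ written in powers of $\rchi-y$ for suitable $y$, or better, directly via part b) of Proposition \ref{prop111} — since $\nu$ is p-m and (by part 1) $\nu K_{d-1}[\rchi]$ is a subgroup of $\nu K[\rchi]$ (Corollary \ref{cor39} applied to the key polynomial $\Phi$ of degree $d$, valid as $d\leq[K[\rchi]\!:\!K]/2$ here because $d\geq 2$ is a strict key degree, cf.\ Theorem \ref{propdiv}.3), and since $\rchi^d$ limit over $K_{d-1}[\rchi]$ gives $\nu(\rchi^d-K_{d-1}[\rchi])=\nu K_{d-1}[\rchi]$, I would show $\nu(\rchi^n-K_{n-1}[\rchi])=\nu K_{d-1}[\rchi]$ for all $n>d$ by the same subgroup argument as in the proof of Proposition \ref{prop111}.b, whence density. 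Finally part 4) is immediate: by part 1) $\nu'=\nu'_{\Phi}$, but more to the point, I would show $\nu=\nu_{\Phi}$ fails in general — rather, I use Proposition \ref{prop319}.1 is not directly applicable since $\nu\neq\nu_\Phi$; instead I argue directly that if $d''>d$ were a strict key degree with strict key polynomial $\Psi$, then by part 3) $\Psi(\rchi)$ is a limit over $K_{d-1}[\rchi]\subseteq K_{d''-1}[\rchi]$, so $\nu(\rchi^{d''}-K_{d''-1}[\rchi])$ has no maximum and in fact the $\Phi_i$-expansion computes $\nu(\Psi)$ exactly (part 2), forcing, via Lemma \ref{aplem33}.1, that $\Psi$ is not a strict key polynomial — a contradiction. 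This closes the proof.
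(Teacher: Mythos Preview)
Your outline for parts 1) and 4) matches the paper's argument essentially verbatim (Lemma \ref{lem33} for 1), then part 2) plus Lemma \ref{aplem33}.1 for 4)). The real issue is part 2), and you correctly flag it as the main obstacle but do not find the key device. The paper's trick is to use the \emph{auxiliary} valuation $\nu'$: since $\nu'=\nu'_{\Phi}$ and, for each $i$, $\nu'(\Phi_i)=\nu'(\Phi)$ (computed from $\Phi_i=\Phi+(\Phi_i-\Phi)$ with $\deg(\Phi_i-\Phi)<d$), one also has $\nu'=\nu'_{\Phi_i}$. Hence for the $\Phi_i$-expansion $f=\sum_j f_{i,j}\Phi_i^j$ one gets the \emph{uniform} lower bound $\nu(f_{i,j})=\nu'(f_{i,j})\geq \nu'(f)-j\nu'(\Phi_i)=\nu'(f)-j\nu'(\Phi)\geq\lambda$, where $\lambda:=\nu'(f)-k\nu'(\Phi)$ is independent of $i$. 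Once $\nu(\Phi_i)>\max(\nu(f)-\lambda,\nu(f))$ this forces $\nu(f_{i,j}\Phi_i^j)>\nu(f)$ for every $j\geq 1$, so $\nu(f)=\nu(f_{i,0})=\nu_{\Phi_i}(f)$. Your proposed alternatives (pairwise-distinctness of the $\nu(f_j)+j\nu(\Phi_i)$, or induction on degree) stumble precisely because the coefficients $f_{i,j}$ change with $i$; the $\nu'$-bound is what tames this.

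For part 3), the paper does not go through Proposition \ref{prop111}.b at all: it reads density off directly from the same computation, since $\nu(f-f_{i,0})\geq\lambda+\nu(\Phi_i)$ is cofinal in $\nu K[\rchi]$. Your route via Proposition \ref{prop111}.b is plausible but requires $\nu K_{d-1}[\rchi]$ to be a group, and your justification for this is misplaced: Theorem \ref{propdiv}.3 concerns \emph{separate} strict key degrees, whereas $d$ here is immediate. (The conclusion is true --- see the induction in the proof of Theorem \ref{propdiv}.3 and the remark following it --- but not for the reason you give.) In any case, once you have the paper's uniform bound for part 2), part 3) costs nothing extra.
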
 
\begin{proof} 1) By Proposition \ref{lem33}, $\Phi$ is a w key polynomial for $\nu'$, $\nu'=\nu_{\Phi}'$, 
and $\Phi$ is a key polynomial for $\nu$. Hence $d$ is a key degree for $\nu$. 
Since $\nu(\rchi^d-K_{d-1}[\rchi])=\nu K[\rchi]$, $d$ is an immediate key degree. \\
\indent 2) Since deg$(\Phi_i-\Phi)<d$, we have $\nu(\Phi_i-\Phi)=\nu'(\Phi_i-\Phi)$, and 
$\nu'(\Phi_i)=\nu_{\Phi}'(\Phi_i)=\nu_{\Phi}'(\Phi+\Phi_i-\Phi)=\min(\nu'(\Phi),\nu'(\Phi_i-\Phi))
=\min(\nu'(\Phi),\nu(\Phi_i-\Phi))=\min(\nu'(\Phi),\nu(\Phi))=\nu'(\Phi)<\nu(\Phi)<\nu(\Phi_i)$. 
By Proposition \ref{lem33}, $\Phi_i$ is a key polynomial for $\nu$. 
We have also: $\nu'=\nu_{\Phi_i}'$. Let $f\in K[\rchi]$, $k$ be the quotient 
of the euclidean division of $\mbox{deg}(f)$ by $d$, $\lambda = \nu'(f)-k\nu'(\Phi)$, and $i$ be 
such that $\nu(\Phi_i)> \max(\nu(f)-\lambda,\nu(f))$. 
Decompose $f$ as $f=f_{i,k}\Phi_i^k+\cdots+f_{i,1}\Phi_i+f_{i,0}$, where the $f_{i,j}$'s 
belong to $K_{d-1}[\rchi]$. Then $\nu'(f)=
\nu_{\Phi_i}'(f)=\min(\nu'(f_{i,j}\Phi_i^j))$. Therefore, for every $j$ we have: 
$\nu(f_{i,j})= \nu'(f_{i,j})\geq \nu'(f)-j\nu'(\Phi_i)
\geq \nu'(f)-k\nu'(\Phi_i)=\nu'(f)-k\nu'(\Phi)=\lambda$. 
Then for every $j$ we have $\nu(f_{i,j}\Phi_i^j)=\nu(f_{i,j})+j\nu(\Phi_i)>
\lambda +j\max(\nu(f)-\lambda,\nu(f))$. If $j\geq 1$, then $\nu(f_{i,j}\Phi^j)>\nu(f)$. 
It follows that $\nu(f)=\nu(f_{i,0})=\nu_{\Phi_i}(f)$. 
This proves that the sequence $(\nu_{\Phi_i}(f))$ is eventually equal to $\nu(f)$. \\
\indent 3) If $\nu(\Phi_i)> \max(\nu(f)-\lambda,\nu(f))$, then 
$$\nu(f-f_{i,0})=\nu\left(\sum_{j=1}^kf_{i,j}\Phi_i^j\right)
\geq \min_{1\leq j\leq k}(\nu(f_{i,j})+j\nu(\Phi_i))\geq\lambda+\nu(\Phi_i)$$ 
is cofinal in $\nu K[\rchi]$, since the sequence $(\nu(\Phi_i ))$ is. 
It follows that $(K[\rchi]|K_{d-1}[\rchi],\nu)$ is dense. \\
\indent 4) Let $f$ be a monic polynomial of degree $d'>d$. Then, there exists 
$i$ such that $\nu(f)=\nu_{\Phi_i}(f)$. By Proposition \ref{aplem33} 1), $f$ is not a  
key polynomial for $\nu$. Hence $d'$ is not a key degree. 
\end{proof}
\subsection{Successors of key degrees} 
The following two theorems characterize the successor of a key degree. They will be useful for 
proving the equivalence of the definitions of key polynomials and abstract key polynomials. They are also 
involved in the construction of complete families of key polynomials. 
\begin{theorem}\label{thm4xx} Let $\nu$ be a pm valuation on $K(\rchi)$, 
$d$ be an immediate key degree, and let $(\Phi_i)$ be 
a sequence of key polynomials of degree $d$ such that the sequence $(\nu(\Phi_i))$ is increasing 
and cofinal in $\nu(\rchi^d- K_{d-1}[\rchi])$. Then. \\
1) The sequence $(\nu_{\Phi_i})$ converges to $\nu$ on $K_d[\rchi]$ in the sense that 
for every $f\in K_d[\rchi]$ the sequence $(\nu_{\Phi_i}(f))$ is eventually equal to $\nu(f)$.  \\
2) Let $d'$ be the smallest degree (if any) such that there exists a monic polynomial $\Phi'$ of degree $d'$ 
satisfying $\nu_{\Phi_i}(\Phi')<\nu(\Phi')$ for every $i$. Then, $\Phi'$ is a key polynomial 
and $d'$ is the next key degree. \\
3) The extension $(K_{d'-1}[\rchi]|K_{d-1}[\rchi],\nu)$ is immediate. 
\end{theorem}
\begin{proof} 1) The family $(\nu_{\Phi_i})$ is increasing, and for every $f\in K[\rchi]$ 
we have $\nu_{\Phi_i}(f)\leq\nu(f)$. 
Assume that $f$ is a polynomial of degree $d$. Without loss of generality we can assume that 
$f$ is monic. Since the sequence $(\nu(\Phi_i))$ is cofinal in $\nu(\rchi^d-K_{d-1}[\rchi])$, 
there is an $i$ such that $\nu(\Phi_i)>\nu(f)$. Now, for eve\-ry $i$ such that $\nu(\Phi_i)>\nu(f)$ 
we have $\nu(f-\Phi_i)=\nu(f)$, and 
$\nu_{\Phi_i}(f)=\min (\nu(\Phi_i),\nu(f-\Phi_i))=\nu(f)$. \\
\indent 2) Let $f$, $g$ in $K_{d'-1}[\rchi]$ with ${\rm deg}(f)+{\rm deg}(g)<[K[\rchi]\!:\!K]$ and 
$q=q_{\Phi'}(f,g)$, $r=r_{\Phi'}(f,g)$. By hypothesis there exists $i$ such that $\nu_{\Phi_i}(f)=
\nu(f)$, $\nu_{\Phi_i}(g)=\nu(g)$, $\nu_{\Phi_i}(q)=\nu(q)$ and 
$\nu_{\Phi_i}(r)=\nu(r)$. Then $\nu(q\Phi'+r)=\nu(fg)=\nu(f)+\nu(g)=\nu_{\Phi_i}(f)+
\nu_{\Phi_i}(g)=\nu_{\Phi_i}(fg)=\nu_{\Phi_i}(q\Phi'+r)$, with $\nu(q\Phi')>\nu_{\Phi_i}(q\Phi')$. 
Assume that $\nu(q\Phi')\leq \nu(r)$. Hence $\nu_{\Phi_i}(q\Phi')<\nu(r)=\nu_{\Phi_i}(r)$, 
and $\nu(fg)=\nu_{\Phi_i}(fg)=\nu_{\Phi_i}(q\Phi')<\nu(q\Phi')=\min (\nu(q\Phi'),\nu(r))\leq 
\nu(fg)$: a contradiction. Hence $\nu(q\Phi')>\nu(r)$, which proves that $\Phi'$ is a key polynomial 
for $\nu$. Let $f$ be a monic polynomial of degree $d''<d'$. Then, there exists 
$i$ such that $\nu(f)=\nu_{\Phi_i}(f)$. By Proposition \ref{aplem33} 1), $f$ is not a key polynomial. \\
\indent 3) Let $f$ be a monic polynomial of degree $n$, $d<n\leq d'-1$. We show that $\nu(f)$ is not 
the maximum of $\nu(\rchi^n-K_{n-1}[\rchi])$. It will follow by Proposition \ref{prop111} 1) that the 
extension $(K_{d'-1}[\rchi]|K_{d-1}[\rchi],\nu)$ is immediate. Let $\Phi_i$ be a w key polynomial 
of degree $d$ such that $\nu(f)=\nu_{\Phi_i}(f)$, and let $f$ be written as 
$f=f_k\Phi_i^k+\cdots+f_1\Phi_i+f_0$, where $f_0,f_1,\dots,f_k$ belong to 
$K_{d-1}[\rchi]$, $f_k$ is monic, and ${\rm deg}(f_k)+kd=n$. Then 
$\nu(f)=\min (\nu(f_k\Phi_i^k),\dots,\nu(f_1\Phi_i),\nu(f_0))$. Assume that $\nu(f_0)>\nu(f)$. 
Then $\nu(f)=\min (\nu(f_k\Phi_i^k),\dots,\nu(f_1\Phi_i))$. Let $\Phi_j$ be a w key polynomial of degree $d$ 
such that $\nu(\Phi_j)>\nu(\Phi_i)$, and set $g=f_k\Phi_j^k+\cdots+f_1\Phi_j$. Then $g$ is a monic 
polynomial of degree $n$, and $\nu(g)\geq \nu_{\Phi_j}(g)=
\min (\nu(f_k\Phi_j^k),\dots,\nu(f_1\Phi_j))>\nu(f)$. Now, assume that $\nu(f_0)= \nu(f)$. Then 
$\nu(f-f_0)\geq\nu(f_0)$. If $\nu(f-f_0)>\nu(f_0)$, then we can let $g=f-f_0$. 
If $\nu(f-f_0)=\nu(f_0)=\nu(f)$, then we let $\Phi_j$ be a w key polynomial of degree $d$ 
such that $\nu(\Phi_j)>\nu(\Phi_i)$, and  $g=f_k\Phi_j^k+\cdots+f_1\Phi_j$. Then 
$\nu(g)\geq \nu_{\Phi_j}(g)=\min (\nu(f_k\Phi_j^k),\dots,\nu(f_1\Phi_j))>\min (\nu(f_k\Phi_i^k),
\dots,\nu(f_1\Phi_i))\geq\nu(f)$. 
\end{proof}
\indent Recall that if $M$ is a $K$-submodule of $K(\rchi)$, then we denote by 
$G_{\nu}(M)$ the additive group  $\displaystyle{\bigoplus_{\gamma\in \nu M} M_{\gamma,\nu}}$. 
\begin{theorem}\label{propdiv} 
Let $\nu$ be a pm valuation on $K(\rchi)$, $d$ be an independent key degree, 
$\Phi$ be a monic polynomial of degree $d$ such that $\nu(\Phi)$ is the maximum of 
$\nu(\rchi^d-K_{d-1}[\rchi])$. Let $d'$ be the next key degree if it exists 
or $d'=[K[\rchi]\! :\! K]$ otherwise. \\
1) $\Phi$ is a key polynomial, and  
$d'$ is the smallest degree such that there exists a monic polynomial $\Phi'$ of 
degree $d'$ with $\nu_{\Phi}(\Phi')<\nu(\Phi')$. \\
2) The restrictions of $\nu$ and $\nu_{\Phi}$ to $K_{d'-1}[\rchi]$ are equal.  \\
3) If $d<[K[\rchi]\! :\! K]/2$, 
then $G_{\nu}(K_{d'-1}[\rchi])$ is a graded algebra and the fraction 
$d'/d$ is equal to the dimension of the 
$G_{\nu}(K_{d-1}[\rchi])$-module $G_{\nu}(K_{d'-1}[\rchi])$. \\
4) If $(K_{d-1}[\rchi]|K,\nu)$ is vs defectless, then $(K_{d'-1}[\rchi]|K,\nu)$ is vs defectless. 
\end{theorem}
\begin{proof} 1) Since $\nu(\Phi)$ is the maximum of $\nu(\rchi^d-K_{d-1}[\rchi])$, by 2) of 
Proposition \ref{aplm318}, $\Phi$ is a key polynomial. 
By Proposition \ref{aplem33}, if $d'<[K[\rchi]\! :\! K]$, then 
$d'$ is the smallest degree such that there exists a 
monic polynomial $\Phi'$ of degree $d'$ with $\nu_{\Phi}(\Phi')<\nu(\Phi')$. 
If $d'=[K[\rchi]\! :\! K]$, then $\nu=\nu_{\Phi}$, and 
by letting $\Phi'$ be the 
minimal polynomial of $\rchi$ over $K$, we have $\nu(\Phi')=+\infty>\nu_{\Phi}(\Phi')$. 
Hence the assertion remains true. 
It also remains true in the case where $d'=+\infty$. \\
\indent  2) Since $\nu_{\Phi}\leq \nu$, by 1) the restrictions of $\nu$ and $\nu_{\Phi}$ 
to $K_{d'-1}[\rchi]$ are equal.\\
\indent 3) 
By Lemma \ref{avpropdiva} $in_{\nu}(\Phi)\notin G_{\nu}(K(\rchi))$. 
Since $d\leq [K[\rchi]\!:\!K]/2$, by Remark \ref{rkap36} $G_{\nu}(K_{d-1}[\rchi])$ is a graded algebra. 
Assume that $in_{\nu}(\Phi)$ is transcendental over $G_{\nu}(K_{d-1}[\rchi])$. 
Then $\rchi$ is transcendental over $K$, so $[K(\rchi)\!:\! K]$ is infinite. 
In the same way as in the proof of Propositions  
\ref{BauranglaisS5} and \ref{rk322} the family $(\Phi^n)$ is separated over $K_{d-1}[\rchi]$ and $\nu=\nu_{\Phi}$. 
Furthermore, the dimension of the $G_{\nu}(K_{d-1}(\rchi))$-module $G_{\nu}(K(\rchi))$ is also infinite. 
Assume that $d'=+\infty$. Then by 1) $\nu=\nu_{\Phi}$, 
the family $(\Phi^n)$ is separated over $K_{d-1}[\rchi]$ and $\nu=\nu_{\Phi}$. 
So the dimension of the $G_{\nu}(K_{d-1}(\rchi))$-module $G_{\nu}(K(\rchi))$ is also infinite. 
Now we assume that $d'<+\infty$. By 1) $\nu=\nu_{\Phi}$ on $K_{d'-1}[\rchi]$ and there exists a monic polynomial 
$\Phi'$ of degree $d'$ such that $\nu_{\Phi}(\Phi')<\nu(\Phi)$. Set $\Phi'=
f_n\Phi^n+f_{n-1}\Phi^{n-1}+\cdots +f_0$, with $f_n,\dots, f_0$ in $K_{d-1}[\rchi]$ (and $f_n\neq 0$). 
We prove that $f_n=1$. 
Since $\Phi$ is irreducible, there exists $g\in K_{d-1}[\rchi]$ 
such that $gf_n$ is congruent to $1$ modulo $\Phi$, i.e.\ $r_{\Phi}(gf_n)=1$. For $0\leq k\leq n$ 
set $r_k=r_{\Phi}(gf_k)$ ($r_n=1$), $q_k=q_{\Phi}(gf_k)$ and $\Phi''=\Phi^n+r_{n-1}\Phi^{n-1}+\cdots +r_0$. 
We have $$g(f_n\Phi^n+f_{n-1}\Phi^{n-1}+\cdots +f_0)=q_n\Phi^{n+1}+q_{n-1}\Phi^n+\cdots +q_0\Phi+
\Phi^n+r_{n-1}\Phi^{n-1}+\cdots +r_0.$$
Since $\Phi$ is a key polynomial, for every $k$ we have $\nu(r_k\Phi^k)=\nu(gf_k\Phi^k)<\nu(q_k\Phi^{k+1})$. 
Hence $$\nu(q_n\Phi^{n+1}+q_{n-1}\Phi^n+\cdots +q_0\Phi)\geq \min (\nu(q_n\Phi^{n+1}),\nu(q_{n-1}\Phi^n), 
\dots,\nu(q_0\Phi))>$$ 
$$\min (\nu(\Phi^n),\nu(r_{n-1}\Phi^{n-1}),\dots ,\nu(r_0))= \nu(g)+
\min (\nu(f_n\Phi^n),\nu(f_{n-1}\Phi^{n-1}),\dots ,\nu(f_0))=\nu(g)+\nu_{\Phi}(\Phi').$$ 
Therefore 
$$\nu(\Phi^n+r_{n-1}\Phi^{n-1}+\cdots +r_0)=$$
$$\nu(g(f_n\Phi^n+f_{n-1}\Phi^{n-1}+\cdots +f_0)-
q_n\Phi^{n+1}+q_{n-1}\Phi^n+\cdots +q_0\Phi)\geq$$ 
$$\min (\nu(g(f_n\Phi^n+f_{n-1}\Phi^{n-1}+\cdots +f_0)),
\nu(q_n\Phi^{n+1}+q_{n-1}\Phi^n+\cdots +q_0\Phi))>$$
$$\nu(g)+\min (\nu(f_n\Phi^n),\nu(f_{n-1}\Phi^{n-1}),\dots ,\nu(f_0))=$$
$$\min (\nu(gf_n\Phi^n),\nu(gf_{n-1}\Phi^{n-1}),\dots ,\nu(gf_0))=$$
$$\min (\nu(\Phi^n),\nu(r_{n-1}\Phi^{n-1}),\dots ,\nu(r_0))=\nu_{\Phi}(\Phi^n+r_{n-1}\Phi^{n-1}+\cdots +r_0).$$ 
So, $\nu(\Phi'')>\nu_{\Phi}(\Phi'')$, where $\mbox{deg}(\Phi'')=d'-\mbox{deg}(g)$.  
Since $d'$ is minimal, we have $g=1$, $f_k=1$ and $d'=nd$. \\
\indent We turn to $G_{\nu}(K_{d'-1}[\rchi])$. 
Let $k_1<\cdots<k_j$ be the integers such that 
$$\nu(f_{k_i}\Phi^{k_i})=
\min (\nu(f_n\Phi^n),\nu(f_{n-1}\Phi^{n-1}),\dots ,\nu(f_0))\mbox{, for }i\in \{1,\dots,j\}.$$ 
By properties of valuations we have 
$$\nu(f_{k_1}\Phi^{k_1}+\cdots+f_{k_j}\Phi^{k_j})>\min (
\nu(f_{k_1}\Phi^{k_1}),\dots,\nu(f_{k_j}\Phi^{k_j})).$$ 
In the same way as above, we can replace $f_{k_j}$ by $1$ 
in such an equality, and since $d'$ is minimal we have $n=k_j$. 
Therefore, we can assume that for $k\in \{0,\dots,n\}$ we have either $\nu(f_k\Phi^k)=\nu_{\Phi}(\Phi')$ 
or $f_k=0$. Consequently $in_{\nu}(\Phi)^n+in_{\nu}(f_{n-1})in_{\nu}(\Phi)^{n-1}+\cdots +in_{\nu}(f_0)=0$. 
Now, for every $g_{n-1},\dots,g_0$ in $K_{d-1}[\rchi]$ we have 
$\nu(g_{n-1}\Phi^{n-1}+\cdots+g_0)=\min (\nu(g_{n-1}\Phi^{n-1}),\cdots,\nu(g_0))$. 
Hence $in_{\nu}(g_{n-1})in_{\nu}(\Phi)^{n-1}+\cdots+in_{\nu}(g_0)\neq 0$. 
It follows that 
$X^n+in_{\nu}(f_{n-1})X^{n-1}+\cdots +in_{\nu}(f_0)$ is 
the minimal polynomial of $in_{\nu}(\Phi)$ over $G_{\nu}(K_{d-1}[\rchi])$. 
This also shows that the group $G_{\nu}(K_{d'-1}[\rchi])$ is equal to 
$G_{\nu}(K_{d-1}[\rchi])(in_{\nu}(\Phi))$, so it is a subalgebra of $G_{\nu}(K(\rchi))$. 
Since for every $g_{n-1},\dots,g_0$ in $K_{d-1}[\rchi]$, 
$in_{\nu}(g_{n-1})in_{\nu}(\Phi)^{n-1}+\cdots+in_{\nu}(g_0)\neq 0$,  
$in_{\nu}(1),in_{\nu}(\Phi),\dots,in_{\nu}(\Phi^{n-1})$ are linearly independent over 
$G_{\nu}(K_{d-1}[\rchi])$. 
Therefore $(in_{\nu}(1),in_{\nu}(\Phi),\dots,in_{\nu}(\Phi)^{n-1})$ is a basis of the 
$G_{\nu}(K_{d-1}[\rchi])$-module $G_{\nu}(K_{d'-1}[\rchi])$. Hence 
its dimension is $n$. \\ 
\indent 4) By Proposition \ref{prop214}, it is sufficient to show that for every integer $n\leq d'-1$, 
$\nu(\rchi^n-K_{n-1}[\rchi])$ has a maximum. Since $(K_{d-1}[\rchi]|K,\nu)$ is vs defectless, 
we can assume that $n\geq d$ (Proposition \ref{prop214}). 
Let $f=f_k\Phi^k+\cdots+f_1\Phi+f_0\in \rchi^n-K_{n-1}[\rchi]$, where $f_0,f_1,\dots,f_k$ belong to 
$K_{d-1}[\rchi]$, $f_k$ is monic, and ${\rm deg}(f_k)+kd=n$. Then 
$\nu(f)=\min (\nu(f_k\Phi^k),\dots,\nu(f_1\Phi),\nu(f_0))\leq \nu(f_k\Phi^k)$. Let $j=n-dk$ be the 
degree of $f_k$. Since $j\leq d-1$, there exists $g\in \rchi^j-K_{j-1}[\rchi]$ such that $\nu(g)$ is the maximum of 
$\nu(\rchi^j-K_{j-1}[\rchi])$. Then $\nu(f)\leq \nu(g\Phi^k)$, which proves that 
$\nu(g\Phi^k)$ is the maximum of $\nu(\rchi^n-K_{n-1}[\rchi])$. 
\end{proof}
\subsection{Complete families of key polynomials.}\label{subsection34}
\indent Let $\Phi_1,\;\dots,\; \Phi_n,\; \dots$ be monic irreducible 
polynomials of $K[\rchi]$ of degrees $d_1=1,\; d_2,\; \dots,\; d_n,\;$ respectively, 
where $d_{n-1}$ divides $d_n$ (if the family has a maximal element $d_n$, then we set 
$d_{n+1}=[K(\rchi)\!:\! K]$). We let $\mathcal{B}$ be 
the family of the $\Phi_1^{e_1}\cdots \Phi_n^{e_n}$, where for $0\leq i\leq n-1$ 
we have: 
$\displaystyle{0\leq e_i< \frac{d_{i+1}}{d_i}}$. Since the mapping $\Phi_i^{e_i}\mapsto {\rm deg}\left(\Phi_i^{e_i}\right)$ 
is one-to-one 
from $\mathcal{B}$ onto $[0,[K(\rchi)\!:\! K][$, $\mathcal{B}$ is a basis of the $K$-module $K[\rchi]$. 
Furthermore, for every $m$ in $\NN$, $\mathcal{B}\cap K_m[\rchi]$ is a basis of the 
$K$-module $K_m[\rchi]$. Now, we can define a basis even if some degree does not divide the 
following one. Indeed, in the case where we have only $d_1=1<d_2<\cdots<d_n<\cdots$, 
we require: for every $n$, $e_1+e_2d_2+\cdots+e_nd_n<d_{n+1}$. We say that 
$\mathcal{B}$ is {\it generated by polynomials} if it is constructed in the above way. If so, then 
$\Phi_1,\;\Phi_2,\; \dots,\; \Phi_n,\; \dots$, are called the {\it generating polynomials} for 
$\mathcal{B}$. 
\begin{remark} \label{rk324}
Let $\nu$ be a $K$-module valuation on $K(\rchi)$, 
$\Phi_1,\dots,\;\Phi_k,\dots$ be 
generating polynomials for a basis $\mathcal{B}$. For $k\geq 1$, fix $\nu'(\Phi_{k})$. 
For every $e_1,\dots,e_k$ with $e_1+e_2d_2+\cdots+e_kd_k<d_{k+1}$, 
let $\nu'(\Phi_{1}^{e_1}\cdots\Phi_{k}^{e_k})=
e_1\nu'(\Phi_{1})+e_2\nu'(\Phi_{2})+\cdots+e_k\nu'(\Phi_{k})$ and for every 
pairwise distinct $y_1,\dots,y_k$ in $\mathcal{B}$, $x_1,\dots,x_k$ in $K$ set 
$\displaystyle{\nu'(x_1y_1+\cdots+x_ky_k)=\min_{1\leq i\leq k}\nu(x_i)\nu'(y_i) }$. Then 
$\nu'$ is a vs defectless $K$-module valuation. If for every $k\geq 1$ we have 
$\nu'(\Phi_k)=\nu(\Phi_k)$, then 
 the $K$-module valuation $\nu'$ defined above is the $K$-module valuation 
$\nu_{\Phi_{d_1},\dots,\Phi_{d_k}}$ defined in Notations \ref{notat325}. 
\end{remark}
\indent 
Let $\nu$ be a pm valuation on $K(\rchi)$, $1=d_1<d_2<\cdots<d_k<\cdots$ be 
the sequence of key degrees of $\nu$. Let ${\mathcal F}$ be a family of key polynomials.  
We say that ${\mathcal F}$ is a {\it complete family of key polynomials for} $\nu$ 
if $\nu$ is the supremum of the family of the pm valuations $\nu_{\Phi_{d_1},\dots,\Phi_{d_k}}$, where 
$k\in \NN\backslash\{0\}$, for $1\leq i\leq k$ $\Phi_{d_i}\in \mathcal{F}$, deg$(\Phi_{d_i})=d_i$. This means that 
if $f$ is a polynomial of degree $d$, then $\nu(f)$ is equal to the maximum of the family 
$\nu_{\Phi_{d_1},\dots,\Phi_{d_k}}(f)$, where $d_k$ is the greatest key degree at most equal to $d$. \\[2mm]
\indent For every pm valuation on $K[\rchi]$ there exists a complete family of key polynomials 
(see for example  \cite{NS18}, \cite{DMS18}). 
We complete this subsection by the construction of a complete family of key polynomials 
which does not require to be constructed by induction on the degree. 
\begin{theorem}\label{thm46} 
Let $\nu$ be a pm valuation on $K(\rchi)$, $1=d_1<d_2<\cdots<d_k<\cdots$ be 
the sequence of  key degrees of $\nu$ and ${\mathcal F}$ be a family  of key polynomials for $\nu$  
which satisfies the following properties for every $k\geq 1$. \\
If $d_k$ is an independent key degree, then ${\mathcal F}$ contains exactly one key polynomial 
$\Phi_{d_k}$ of degree $d_k$, and $\nu(\Phi_{d_k)}=\max (\nu(\rchi^{d_k}-K_{d_k-1}[\rchi]))$. 
For notational convenience, for every integer $m$ we set $\Phi_{d_k,m}=\Phi_{d_k}$. \\ 
If $d_k$ is an immediate key degree, then the key polynomials of degree $d_k$ of ${\mathcal F}$ 
form a sequence $(\Phi_{d_k,m})$ such that the sequence $(\nu(\Phi_{d_k,m}))$ is increasing, cofinal in 
$\nu(\rchi^{d_k}-K_{d_k-1}[\rchi])$. \\ 
For every $k$, $m_1,\dots,m_k$, we let $\nu_{(m_1,\dots,m_k)} = 
\nu_{\Phi_{d_{1,m_1}},\dots,\Phi_{d_{k,m_k}}}$ (see Notations \ref{notat325} and 
Remark \ref{rk324}). 
Then $\mathcal{F}$ is a complete family of key polynomials. Furthermore, 
for every $f$, there are infinitely many $(m_1,\dots,m_k)$'s such that $(\nu_{(m_1,\dots,m_k)}(f))=\nu(f)$.  
\end{theorem} 
\begin{proof} 
Note that, for every $k$, $m_1,\dots,m_k$, we have $\nu_{(m_1,\dots,m_k)}\leq \nu$. 
Let $f\in K[\rchi]$ and $d_k$ be the greatest key degree such that the degree of $f$ is at least 
equal to $d_k$. By Theorems \ref{propdiv} and \ref{thm4xx}, there exists $m_k$ 
such that $\nu(f)=\nu_{\Phi_{d_k,m_k}}(f)$. Furthermore, since the family $(\nu_{\Phi_{d_k,m_k}})$ 
is increasing, we have $m_k'\geq m_k\Rightarrow \nu_{\Phi_{d_k,m_k'}}(f)=\nu(f)$. 
Let 
$f=f_j\Phi_{d_k,m_k}^j+\cdots+f_1\Phi_{d_k,m_k}+f_0$, 
where $f_0,\;f_1,\dots,\;f_j$ belong to $K_{d_k-1}[\rchi]$. We know that there is some 
$m_{k-1}$ such that $\nu(f_0)=\nu_{\Phi_{d_{k-1},m_{k-1}}}(f_0),\dots,\;\nu(f_j)=
\nu_{\Phi_{d_{k-1},m_{k-1}}}(f_j)$. Then, $\nu (f)=\nu_{\Phi_{d_{k-1},m_{k-1}},\Phi_{d_k,m_k}}(f)$. 
So by induction we get a $k$-uple $(m_1,\dots,m_k)$ such that 
$\nu(f)=\nu_{(m_1,\dots,m_k)}(f)$. \\ 
\indent Now, we can do the same construction with any $m_k'\geq m_k$, which proves that there are 
infinitely many such $k$-uples. 
\end{proof} 
\section{Abstract key polynomials}\label{subsec37}
\indent In this section $\rchi$ is assumed to be transcendental over $K$. The mapping $\nu$ is a valuation or 
a pseudo-valuation. If $\nu$ is a pseudo-valuation, then we assume that the degrees of $\Phi$ and $\Phi'$ are lower than 
the degree of a generator of the ideal $\{f\in K[\rchi]\; :\; \nu(f)=\infty\}$. Our goal is to prove that 
our definition of key polynomials is equivalent to the definition of abstract key polynomials. \\
\indent  We start with some notations. \\
\indent Let $f$ be a polynomial degree of $n$.  
For $1 \leq i \leq n$, set $f_{(i)}=(1/i!)\,f^{(i)}$, where 
$f^{(i)}$ is the $i$-th formal derivative of $f$. If the characteristic of $K$ is 
$p>0$, then we do not replace $p\! \cdot \! x$ by $0$; the simplification holds, if necessary, 
after the division by $i!$. For example, if 
$f(X)=X^p$, then we have $f_{(1)}(X)=pX^{p-1}=0$, and
$f_{(p)}(X)=(1/p!)\! \cdot\! p!=1$. \\
\indent 
For any polynomial $f$, let 
$\varepsilon_{\mu}(f)$ be the maximum of the set 
$$\displaystyle{\left\{\frac{\nu(f)-\nu(f_{(i)})}{i} \mid i\in \mathbb{N}\backslash\{0\}\right\} }.$$  
We say that $\Phi$ is an {\it abstract key polynomial} 
(or a {\it key polynomial}) {\it for }
$\nu$ if it is monic and for every $f\in K[\rchi]$ such that $\varepsilon_{\nu}(f)\geq \varepsilon_{\nu}(\Phi)$ we have 
${\rm deg}(f)\geq {\rm deg}(\Phi)$ (\cite{DMS18}).
\begin{remark}(\cite[Remark 2.1]{NS18})\label{rkNS21} 
The monic polynomials of degree $1$ are abstract key polynomials.  
\end{remark}
\begin{proposition}(\cite[Lemma 2.3 (iii)]{NS18}) Let $\Phi$ be an abstract key polynomial for $\nu$, $h_1,\dots, 
h_s$ be polynomials with degrees less than  ${\rm deg}(\Phi)$ and $q$ be the quotient of the euclidean 
division of $\displaystyle{\prod_{i=1}^sh_i}$ by $\Phi$. Then  $\displaystyle{\nu\left(\prod_{i=1}^sh_i-q\Phi\right)
=\nu\left(\prod_{i=1}^sh_i\right)<\nu(q\Phi)}$. 
\end{proposition}
\begin{corollary}\label{cor351} The abstract key polynomials are key polynomials as defined in Definitions \ref{def32}. \end{corollary}
\indent The converse is a consequence of the following theorem. In this theorem for a key polynomial $\Phi$ 
we let $\square (\Phi):=\{f\in K[\rchi] \mid f \mbox{ is monic }, \nu_{\Phi}(f)<\nu(f)\}$, and $\boxminus(\Phi)$ 
be the subset of elements of $\square(\Phi)$ with minimum degree. 
\begin{theorem}(\cite[Theorem 2.12]{NS18})\label{thmNS212} 
Let $\Phi$ be a monic polynomial. Then $\Phi$ is an abstract 
key polynomial if, and only if, there exists an abstract key polynomial $\Phi_{-}$ such that: \\ 
either a) $\Phi$ belongs to $\boxminus(\Phi_-)$,\\
or b) \\
i) the elements in $\boxminus(\Phi_-)$ have the same degree as $\Phi_-$, \\
ii) the set of valuations of all elements in $\boxminus(\Phi_-)$ has no maximum element, \\
iii) for every $\Phi'$ in  $\boxminus(\Phi_-)$, $\nu_{\Phi'}(\Phi)<\nu(\Phi)$, \\ 
iv) the degree of $\Phi$ is minimum among the degrees of polynomials satisfying iii). 
\end{theorem}
\begin{corollary}\label{cor353} The key polynomials as defined in Definitions \ref{def32} 
are abstract key polynomials. \end{corollary}
\begin{proof}
We prove this by induction on the degrees. By Proposition \ref{rks33} 1) and Remark \ref{rkNS21} 
this is true for polynomials of degree $1$. 
Let $\Phi$ be a key polynomial of degree $d>1$, and assume that every key polynomial of degree less that $d$ is
an abstract key polynomial. We denote by $d'$ te preceding key degree. If $d'$ is an independent key degree, then by a) of 
Theorem \ref{thmNS212} and Theorem \ref{propdiv}, $\Phi$ is an abstract key polynomial. 
 If $d'$ is an immediate key degree, then by b) of 
Theorem \ref{thmNS212} and Theorem \ref{thm4xx}, $\Phi$ is an abstract key polynomial. 
\end{proof}
\end{document}